\def\amsbb{\use@mathgroup \M@U \symAMSb}
\DeclareSymbolFont{cyrillic}{T2A}{cmr}{m}{n}
\DeclareMathSymbol{\D}{\mathalpha}{cyrillic}{196}
\DeclareMathSymbol{\shortminus}{\mathbin}{AMSa}{"39}
\theoremstyle{plain}
\newtheorem{theorem}{Theorem}[section]
\newtheorem*{theorem*}{Theorem}
\newtheorem{lemma}[theorem]{Lemma}
\newtheorem{proposition}[theorem]{Proposition}
\theoremstyle{definition}
\newtheorem{definition}[theorem]{Definition}
\newtheorem*{condition}{Condition}
\theoremstyle{remark}
\newtheorem{remark}[theorem]{Remark}
\def\namedlabel#1#2{\begingroup
   #2%
 \def\@currentlabel{#2}%
   \phantomsection\label{#1}\endgroup
}
\def\R{\ensuremath{\amsbb R}}
\def\N{\ensuremath{\amsbb N}}
\def\e{{\ensuremath{\rm e}}}
\def\p{\ensuremath{\amsbb P}}
\def\O{\ensuremath{\text{O}}}
\def\oo{\ensuremath{\text{o}}}
\def\A{\ensuremath{A_n^{(q)}}}
\def\cZ{\mathcal{Z}}
\def\1{{\mathbbm 1}}
\def\XX{\mathbb{X}}
\def\uu{\mathbb u}
\def\MM{\mathbb M}
\def\ttau{\bbtau}
\def\ttheta{\bbtheta}
\def\tt{\mathbb t}
\def\PPsi{\mathbb \Psi}
\def\FF{\mathbb F}
\def\ie{{\em i.e.}, }
\def\iid{{i.i.d.\ }}
\def\df{d.f.\ }
\def\dist{\ensuremath{\text{dist}}}
\def\es{{\emptyset}}
\def\sm{\setminus}
\def\eps{\varepsilon}
\numberwithin{equation}{section}
\begin{document}

\title[Multivariate extreme values for dynamical systems]{Multivariate extreme values for dynamical systems}

\author[R. Aimino]{Romain Aimino}
\address{Romain Aimino\\ Centro de Matem\'{a}tica da Universidade do Porto\\ Rua do
Campo Alegre 687\\ 4169-007 Porto\\ Portugal}
\email{\href{mailto:romain.aimino@fc.up.pt}{romain.aimino@fc.up.pt}}
\urladdr{\url{http://www.fc.up.pt/pessoas/romain.aimino/}}

\author[A. C. M. Freitas]{Ana Cristina Moreira Freitas}
\address{Ana Cristina Moreira Freitas\\ Centro de Matem\'{a}tica \&
Faculdade de Economia da Universidade do Porto\\ Rua Dr. Roberto Frias \\
4200-464 Porto\\ Portugal} \email{\href{mailto:amoreira@fep.up.pt}{amoreira@fep.up.pt}}
\urladdr{\url{http://www.fep.up.pt/docentes/amoreira/}}

\author[J. M. Freitas]{Jorge Milhazes Freitas}
\address{Jorge Milhazes Freitas\\ Centro de Matem\'{a}tica \& Faculdade de Ci\^encias da Universidade do Porto\\ Rua do
Campo Alegre 687\\ 4169-007 Porto\\ Portugal}
\email[Corresponding author]{\href{mailto:jmfreita@fc.up.pt}{jmfreita@fc.up.pt}}
\urladdr{\url{http://www.fc.up.pt/pessoas/jmfreita/}}

\author[M. Todd]{Mike Todd}
\address{Mike Todd\\ Mathematical Institute\\
University of St Andrews\\
North Haugh\\
St Andrews\\
KY16 9SS\\
Scotland \\} \email{\href{mailto:m.todd@st-andrews.ac.uk }{m.todd@st-andrews.ac.uk }}
\urladdr{\url{https://mtoddm.github.io/}}

\thanks{All authors were partially financed by Portuguese public funds through FCT  -- Funda\c{c}\~ao para a Ci\^encia e a Tecnologia, I.P., in the framework of the projects PTDC/MAT-PUR/4048/2021, 2022.07167.PTDC and CMUP's project with reference UIDB/00144/2020. JMF and MT thank the Erwin Schr\"odinger Institute (meeting ZPS24) where part of this work was done.
}

\date{\today}

\keywords{Multivariate extremes, copulas, extremal index} 
\subjclass[2020]{37A50, 37A25, 37B20, 60G70, 62H05}


\begin{abstract}
We establish a theory for multivariate extreme value analysis of dynamical systems. Namely, we provide conditions adapted to the dynamical setting which enable the study of dependence between extreme values of the components of $\R^d$-valued observables evaluated along the orbits of the systems. We study this cross-sectional dependence, which results from the combination of a spatial and a temporal dependence structures. We give several illustrative applications, where concrete systems and dependence sources are introduced and analysed. 
\end{abstract}

\maketitle

\section{Introduction}

The study of rare events for dynamical systems is recent but has experienced a vast development in the last decade, partly motivated by applications to climate dynamics, where dynamical systems (such as the Lorenz models) provide accurate descriptions of meteorological phenomena. This development has been anchored in a connection between the observation of rare events, detected by the appearance of extreme values, and the recurrence properties of sensitive regions, under the action of the underlying dynamics. The main idea is that chaotic systems lose memory quickly which makes their orbits behave like random asymptotically independent observations. This strategy has been successfully applied to prove the existence of limit theorems regarding the distribution of the extremal order statistics, point processes, records, as well as ergodic averages of heavy tailed observables.

In some sense, the study of extreme events for dynamical systems has only recently caught up with the state of the art of univariate Extreme Value Theory. However, since the 1980s, many extreme value theorists have moved from the univariate theory to the multivariate context, where one is concerned with extremes in a multivariate random sample, \ie events for which at least one of the components reaches exceptionally high (or low) values. Of course, focusing on the behaviour of one of the components is the subject of univariate extreme value theory. The main point of multivariate extreme value analysis is understanding the interplay between the extremes in the different components. This insight is of crucial importance in climate dynamics, where the influence between extremal observations of different variables (such as pressure and temperature) as well as their  spatial and temporal dependence is vital for predicting extreme weather events. The dependence structure of extreme phenomena is pivotal for anticipating compound risks such as in the case of the 2022 European drought addressed in \cite{FPB23} or the study of co-occurring extremes in \cite{MF23} or multivariate rainfall time series in \cite{BN23}, just to give some recent examples.

The main goal of this paper is to introduce the first (to our knowledge) theoretical results on multivariate extreme value analysis for dynamical systems, thus closing the gap in theories described above. The differences in proof techniques between the univariate and multivariate cases are few, so the technical work presented in this paper is brief: our focus is on beginning to put the multivariate approach in the dynamical setting on a firm basis and presenting some elementary examples of its application.  Unlike independent sequences or the strong mixing extremal properties of the stationary processes from the classical literature, the processes arising from dynamical systems require a more flexible time dependence structure, which we provide here. We will then study the cross-sectional dependence of the vector valued observables evaluated along the orbits of the systems, which will describe the spatial relationships where an extreme observation of one of the components is responsible for the appearance of other extreme observation in another component, but we will also analyse how the short recurrence properties may introduce a source of time dependence, so that the co-occurrence of extremes of different quantities appear slightly out of sync in time. This phenomenon is directly connected with clustering of extremal observations which can be viewed through an extremal index function: here we give a dynamically natural formula for this. Interestingly, while the spatial dependence pertaining to the cross-sectional relations between the components was copiously studied, the literature regarding to the time dependence and the extremal index function is relatively scarce. One of the advantages of this dynamical approach to multivariate extremes is that we can see, in a natural and simple way, how the local (or fast) recurrence properties of the maximal regions contribute to the overall dependence structure of extremes, making it one of the interests of the present paper.

We emphasise that the multivariate extremes perspective here is completely new in the dynamical systems setting, although there have been works considering higher dimensional variables such as \cite{FV18} or multidimensional point processes accounting for the extremes of $\R^d$-valued dynamically defined stationary processes as in \cite{FFM20, FFT25}. While in the former case, the authors take observables corresponding to the distance to the diagonal, reducing it to a univariate problem, in the latter case the relations between the $d$-components are not considered.

The paper is organised as follows. In Section~\ref{sec:MEVA} we make a brief introduction to Multivariate Extreme Value Analysis, present the pertinent concepts and objects and then, in Section~\ref{subsec:MEVA-dynamics}, we give the main theoretical results which enable its application to dynamical systems. In Section~\ref{sec:examples}, we give some illustrative applications, introducing some mechanisms to create both spatial and temporal dependence, based on particular choices of observables and dynamics so that the recurrence properties of the respective maximal sets give rise to different dependence profiles. We remark that the examples presented and the respective mechanisms are meant to illustrate the potential of the theory and of its applications, so as to make a clear contribution in both the dynamical and the extreme settings: there is ample scope to elaborate further on these, but here we are focused on presenting the main ideas in a simple way.   Our main focus is on stable dependence functions (and Pickands dependence functions), so we explain the connection of this theory to copulas in a short appendix.

\textit{Notation:} Given sequences $(a_n)_n$ and $(b_n)_n$ where $b_n\neq 0$ for all $n$, we will sometimes write $a_n=\O (b_n)$ if there is some $C>0$ such that $a_n\le Cb_n$; write $a_n= \oo(b_n)$ if $\lim_{n\to \infty}\frac{a_n}{b_n}=0$; and write  $a_n\sim b_n$ if $\lim_{n\to \infty}\frac{a_n}{b_n}=1$.

\section{Multivariate extreme value analysis}
\label{sec:MEVA}

Let $(\mathcal X,\mathcal B_{\mathcal X}, \mu, f)$ be a discrete time dynamical system, where $\mathcal X$ is a compact metric space, 
 $\mathcal B_{\mathcal X}$ is its Borel $\sigma$-algebra, $f:{\mathcal X}\to{\mathcal X}$ is a measurable map and $\mu$ is an $f$-invariant probability measure, \ie $\mu(f^{-1}(B))=\mu(B)$ for all $B\in\mathcal B_{\mathcal X}$. Let $\mathbb \Psi:{\mathcal X}\to\bar{\R}^d$ be an observable (measurable) function and define the stochastic process $\XX_0, \XX_1,\ldots$ given by
\begin{equation}
\label{eq:dynamics-SP}
\XX_n=\mathbb \Psi\circ f^n,\qquad\text{for every $n\in\N_0$}.
\end{equation}
We will use the notation $\XX_n=(X_{n1},\ldots, X_{nd})$ and $\mathbb  \Psi=(\psi_1,\ldots,\psi_d)$ whenever we need to refer to the respective components specifically.

In order to establish notation, we will use blackboard bold for vectors or vector valued functions taking values in $\R^d$. In particular, $\mathbb 0, \mathbb 1$ correspond to the vectors in $\R^d$ with all entries equal to $0$ and $1$, respectively. Operations and functions applied to vectors are to be interpreted componentwise, so for example: for $\tt,\ttau\in(0,\infty)^d$ and $c>0$, we write  
$$
\e^{-\ttau}:=\left(\e^{-\tau_1},\ldots,\e^{-\tau_d}\right),\quad (\ttau+\tt)^c:=((\tau_1+t_1)^c,\ldots, (\tau_d+t_d)^c),\quad \ttau/\tt:=\left(\frac{\tau_1}{t_1},\ldots,\frac{\tau_d}{t_d}\right).
$$

Our main goal is to study the multivariate extremal behaviour of such stochastic processes arising from chaotic dynamics. For that purpose we introduce the componentwise maxima sequence
$$
\MM_n:=(M_{n1},\ldots,M_{nd}),\quad\text{where}\quad M_{nj}:=\max_{i=0,\ldots,n-1}X_{ij},\quad\text{for}\quad j=1,\ldots,d.
$$ 

For an asymptotic frequency vector $\ttau=(\tau_1,\ldots,\tau_d)\in[0,+\infty)^d\setminus \mathbb 0$, we consider a sequence of normalising vectors $(\uu_n(\ttau))_{n}$ such that $ \uu_n(\ttau)=(u_{n1}(\tau_1),\ldots,u_{nd}(\tau_d))$ and
\begin{equation}
\label{eq:def_un}
\lim_{n\to\infty}n\mu(X_{0j}>u_{nj}(\tau_j))=\lim_{n\to\infty}n\mu\left(\left\{x\in\mathcal X\colon \psi_j(x)>u_{nj}(\tau_j)\right\}\right)=\tau_j, \quad  j=1,\ldots,d.
\end{equation}

Let $\tt\in(0,1)^d$ be such that $\tt=\e^{-\ttau}$ (or equivalently $\ttau=-\log \tt$). 
We aim to find a multivariate extreme value distribution function (d.f.) $H$ supported on $[0,1]^d$ and such that
\begin{equation}
\label{eq:def_H}
\lim_{n\to\infty}\mu(\MM_n\leq \uu_n(-\log \tt))=H(\tt) .
\end{equation}

We introduce some notation we will use later: $\XX_n\not\leq\uu_n(\ttau)$ means that there exists $1\le j\le d$ such that $X_{nj}>u_{nj}(\tau_j)$.

\subsection{The classical setting}
\label{subsec:MEVA-classic}

Let $\hat \XX_0, \hat \XX_1,\ldots$ denote an associated \iid sequence of random vectors with $\hat\XX_0=\mathbb \Psi$ (i.e., $\mu(\hat\XX_i>\tt) =\mu(\{x:\PPsi(x)>\tt\})$). Also define the respective sequence of partial maxima vectors $(\hat \MM_n)_n$ with components $\hat M_{nj}$, $j=1,\ldots,d$ and multivariate extreme value \df $\hat H$ analogously as for $\XX_n$. Note that by \eqref{eq:def_un} and the definition of $\tt$ we have that $\hat H$ has uniform marginals, namely,
$$
\hat H_j(t_j)=\lim_{n\to\infty}\mu\left(\hat M_{nj}\leq u_{nj}(-\log t_j)\right)=\lim_{n\to\infty}\left(\mu\left(\psi_j\leq u_{nj}(-\log t_j)\right)\right)^n=t_j, \quad j=1,\ldots,d.
$$
This fact and the fact that the joint distribution function of $\hat \MM_n$ is the $n$-th power of that of $\hat \XX_0$ allow us to obtain that $\hat H$ satisfies the homogeneity property (further elaborated in \eqref{eq:copula-homogeneity}):
\begin{equation}
\label{eq:homogeneity-H-hat}
\hat H(\tt^c)=\left(\hat H(\tt)\right)^c \qquad \text{for all}\quad \tt\in[0,1]^d\quad\text{and}\quad c>0.
\end{equation}

The main interest of multivariate analysis is understanding the dependence between the various components of the multivariate observations. For this purpose several devices such as dependence functions and intensity measures have been introduced and thoroughly studied (see \cite{FHR11,S12}, for example). Our main tool here will be the stable tail dependence function  (or the $D$-norm in the terminology of \cite{FHR11}, where $D$ is the Pickands dependence function). In order to introduce it in a simple and compact way, we assume that the marginals $H_j$ are continuous and $H_j^{-1}$ denotes the respective generalised inverse distribution function ($H_j^{-1}(u)=\inf\{t\colon\; H_j(t)=u\}$). This will be the case in all our applications (recall that in the \iid case $\hat H_j$ is the uniform distribution). 
\begin{definition}
Assuming that \eqref{eq:def_H} holds, we define the respective \emph{stable dependence function} for each $\ttau\in [0,\infty)^d$ as
\begin{equation*}
\label{eq:gamma-hat-def}
\Gamma(\ttau):=-\log H \left(H_1^{-1}\left(\e^{-\tau_1}\right),\ldots,H_d^{-1}\left(\e^{-\tau_d}\right)\right).
\end{equation*}
\end{definition}

 The stable dependence function can alternatively be written as a log transformation of the limiting extreme value copula, see the appendix. 
Sometimes $\Gamma(\ttau)$ is referred to as the \emph{$D$-norm} of $\ttau$ (\cite{FHR11}). Note that in the \iid setting, since we have uniform marginals, $\hat \Gamma (\ttau)=-\log \hat H \left(\e^{-\ttau}\right)$. Let $\hat G(\ttau):=\e^{-\hat \Gamma(\ttau)}$. As in the univariate context (see \cite[Theorem~1.5.1]{LLR83}), one can show that the existence of the limit $\lim_{n\to\infty}\mu(\hat{\mathbb M}_n\leq \uu_n(\ttau))=\hat G(\ttau)$ is equivalent to the existence of the limit
\begin{equation}
\label{eq:unvector}
\lim_{n\to\infty}n\mu(\XX_0\not\leq\uu_n(\ttau))=\lim_{n\to\infty}n\mu\left(\bigcup_{j=1}^d\{X_{0j}>u_n(\tau_j)\}\right)=\hat\Gamma(\ttau).
\end{equation}
The homogeneity property \eqref{eq:homogeneity-H-hat} translates to 
\begin{equation}
\label{eq:hat-homogeneity}
\hat\Gamma(c\ttau)=c\,\hat\Gamma(\ttau)\qquad\text{and}\qquad \hat G(c\ttau)=(\hat G(\ttau))^c, \qquad \text{for all}\quad c>0.
\end{equation}
Moreover, we have that
\begin{equation}
\label{eq:bounds}
\max\{\tau_1,\ldots,\tau_d\}\leq \hat \Gamma(\ttau)\leq \tau_1+\cdots+\tau_d,
\end{equation}
where the upper bound corresponds to asymptotic component independence, while the lower bound corresponds to asymptotic perfect association.
This terminology becomes transparent if we reinterpret \eqref{eq:bounds} in terms of the copula $C_{\hat H}$ of the distribution function $\hat H$ (see Appendix \ref{sec:appendix_copulas}):
\begin{equation}
    \label{eq:bounds_copula}
     \min\{t_1, \ldots, t_d\} \ge C_{\hat H}(\tt) \ge t_1 \cdot \cdots \cdot t_d, \qquad \text{for all}\quad \tt \in [0,1]^d,
\end{equation}
 Note that both bounds correspond to realisable stable dependence functions.

When we consider stationary stochastic processes and drop the independence assumption, a new source of dependence may appear. This temporal dependence is associated to clustering of extremal multivariate observations (corresponding to at least one component taking an extremal value), which is conveniently described by a multivariate extremal index function $\theta(\ttau)$ introduced in \cite{N94}. Recall that, in the univariate case, the extremal index is a parameter $\theta_j\in[0,1]$ that appears when, for every $\tau_j\geq0$ and $(u_{nj}(\tau_j))_n$ as in \eqref{eq:def_un}, we have  
\begin{equation}
\lim_{n\to\infty}\mu(M_{nj}\leq u_{nj}(\tau_j))=\e^{-\theta_j\tau_j}.
\label{eq:Mmargs}
\end{equation}
 Note that, when this limit exists (which we express by saying that $\theta_j$ exists or is well defined) then the marginals of $H$ are such that $H_j(t_j)=t_j^{\theta_j}$. Therefore, assuming that for each $j=1,\ldots, d$, every $\theta_j$ is well defined and putting 
$$
\ttheta:=(\theta_1,\ldots,\theta_d),
$$
we have that 
$$H\left(\e^{-\ttau}\right)=\e^{-\Gamma(\ttheta\ttau)}=:G(\ttau).$$  
\begin{definition}
For every $\ttau\in [0,\infty)^d\setminus \mathbb 0$, we define the multivariate extremal index function as: 
\begin{equation}
\label{def:EI}
\theta(\ttau):=\frac{\log G(\ttau)}{\log\hat G(\ttau)}=\frac{\Gamma(\ttheta\ttau)}{\hat \Gamma (\ttau)}.
\end{equation}
\end{definition}
In \cite[Proposition~3.1]{N94}, under a distributional mixing assumption inspired by Leadbetter's $D$ condition, denoted there by $\Delta$, it was proved that $\theta(c\ttau)=\theta(\ttau)$ for all $c>0$, which means that both $\Gamma$ and $G$ share the same homogeneity property of their hat versions stated in  \eqref{eq:hat-homogeneity}, and the univariate marginal extremal indices $\theta_j$, $j=1,\ldots,d$, can be recovered from the multivariate extremal index function by setting all coordinates of $\ttau$ equal to $0$ except the $j$-th. Moreover, one can check that the bounds of \eqref{eq:bounds} also apply to $\Gamma$.

As we explain below, motivated by the dynamical applications, we will assume a mixing condition weaker than Leadbetter's, we will use a more tractable formula for the multivariate extremal index function and we will prove that these properties still hold in that context.

The homogeneity property of $\Gamma$ suggests a reduction to the $(d-1)$-dimensional unit simplex $\mathcal S_{d}=\{\bbalpha\in[0,1]^d\colon\alpha_1+\cdots+\alpha_d=1\}$: the restriction of $\Gamma$ to $\mathcal S_{d}$ is often called the \emph{Pickands dependence function} $D$:
$$
\Gamma(\ttau)=(\tau_1+\cdots+\tau_d) D\left(\alpha_1,\ldots,\alpha_{d-1}\right),\quad\text{where}\quad\alpha_j=\frac{\tau_j}{\tau_1+\cdots+\tau_d}.
$$

\subsection{A dynamically adapted approach to the study of multivariate extremes}
\label{subsec:MEVA-dynamics}

Our main goal here is to study the convergence \eqref{eq:def_H} for dynamically defined multivariate stochastic processes as in \eqref{eq:dynamics-SP} and to give a more computable formula for $\theta(\ttau)$. Convergence for general stationary stochastic processes was initiated in \cite{H89a,H89} under a distributional mixing condition very much akin to the original $D$ condition introduced by Leadbetter in the univariate context. This condition has been used ubiquitously to study extremes of stationary random vectors. 
The problem is that this condition is not amenable to application in the dynamical setting and therefore we propose a weaker version adapted to this context motivated by our earlier works in the univariate framework, where this weakness is compensated by a condition similar to the $D^{(k)}$ condition from \cite{CHM91}, which allows clustering but forbids the concentration of clusters, so that we can still recover the existence of an extremal limit. This is accomplished using an idea introduced in \cite{FFT12} and further elaborated in \cite{FFT15}, which essentially says that we may replace the occurrence of an abnormal observation by that consisting of an abnormal observation followed by a block of regular observations, which in the dynamical setting corresponds to replacing balls by annuli. For that purpose, for $q\in\N$ and $\ttau\in[0,\infty)^d$ we define
\begin{equation*}
\label{eq:A-def}
\A(\ttau)=\left\{\XX_0\not\leq \uu_n(\ttau) \right\}\cap f^{-1}(\mathbb M_q\leq\uu_n(\ttau))=\left\{\XX_0\not\leq \uu_n(\ttau), \XX_1\leq \uu_n(\ttau), \ldots, \XX_q\leq \uu_n(\ttau)\right\}.
\end{equation*}
We also set $A^{(0)}_n(\ttau):=\left\{\XX_0\not\leq \uu_n(\ttau)\right\}$. 
Let $B\in\mathcal B_\mathcal X$ be an event. For some $s,\ell\in\N_0$, we define:
\begin{equation*}
\label{eq:W-def}
\mathscr W_{s,\ell}(B)=\bigcap_{i=s}^{s+\max\{\ell-1,\ 0\}} f^{-i}(B^c).
\end{equation*}
We will write $\mathscr W_{s,\ell}^c(B):=(\mathscr W_{s,\ell}(B))^c$. 
Observe that 
$
\mathscr W_{0,n}(A^{(0)}_n(\ttau))=\{\MM_n\leq \uu_n(\ttau)\}.
$

We state now the two main conditions on the time dependence structure of the process.
\begin{condition}[$\D(\uu_n)$]\label{cond:D} We say that $\D(\uu_n)$ holds for the sequence $\XX_0,\XX_1,\ldots$ if for every  $\ell,t,n\in\N$ and $q\in\N_0$,
\begin{equation*}\label{eq:condD}
\left|\mu\left(\A(\ttau)\cap
 \mathscr W_{t,\ell}\left(\A(\ttau)\right) \right)-\mu\left(\A(\ttau)\right)
  \mu\left(\mathscr W_{0,\ell}\left(\A(\ttau)\right)\right)\right|\leq \gamma(q,n,t),
\end{equation*}
where $\gamma(q,n,t)$ is decreasing in $t$ for each $q, n$ and, for every $q\in\N_0$, there exists a sequence $(t_n)_{n\in\N}$ such that $t_n=\oo(n)$ and
$\lim_{q\to\infty}\limsup_{n\to\infty}n\gamma(q,n,t_n)=0$.
\end{condition}
The condition above is a mixing type condition obtained from adjusting to the multivariate setting the homonymous condition from our previous work (see \cite{FFT15} for example), which has the advantage of not imposing a uniform bound on $q$.  This means that, as with its univariate version, it will follow easily for systems with sufficiently fast decay of correlations as in all the examples considered below. 

We now introduce the corresponding clustering separation condition.
For some fixed $q\in\N_0$, consider the sequence $(t_n)_{n\in\N}$, given by condition  $\D(\uu_n)$ and let $(k_n)_{n\in\N}$ be another sequence of integers such that 
\begin{equation}
\label{eq:kn-sequence}
k_n\to\infty\quad \mbox{and}\quad  k_n t_n = \oo(n).
\end{equation}

\begin{condition}[$\D'(\uu_n)$]\label{cond:D'q} We say that $\D'(\uu_n)$
holds for the sequence $\XX_0,\XX_1,\XX_2,\ldots$ if there exists a sequence $(k_n)_{n\in\N}$ satisfying \eqref{eq:kn-sequence} and such that
\begin{equation*}
\label{eq:D'}
\lim_{q\to\infty}\Delta^{(q)}(\uu_n(\ttau)):=\lim_{q\to\infty}\limsup_{n\rightarrow\infty}\,n\sum_{j=q+1}^{\lfloor n/k_n\rfloor-1}\mu\left( \A(\ttau)\cap f^{-j}\left(\A(\ttau)\right)
\right)=0.
\end{equation*}
\end{condition}
The cluster separating condition we give here uses a double limit as the original anti-clustering condition $D'$ from Leadbetter, as opposed to the more general version we used in \cite{FFT25}, with a diverging $(q_n)_n$ sequence. This option guarantees a cleaner and easier proof of some of the properties of $\theta(\ttau)$ and $\Gamma(\ttau)$, without compromising the applications since it will be satisfied in all examples considered below. Note that $\D'(\uu_n)$ does not forbid the appearance of clustering, it just imposes the clusters to appear sufficiently well separated in the time line.

We give now a formula for the extremal index function, which together with the dependence conditions above, will allow us to verify that it actually provides an alternative definition, which coincides with the original one and enjoys the same properties:
\begin{equation}
\label{def:thetan}
\theta(\ttau)=
\lim_{q\to\infty}\lim_{n\to\infty}\frac{\mu\left(\A(\ttau)\right)}{\mu\left(A^{(0)}_n(\ttau)\right)}.
\end{equation}
Note that as in condition $\D'(\uu_n)$ we use a double limit, which is also the case in \cite{P97,HV20}, for example.

\begin{theorem}
\label{thm:distributional-convergence}
Let $\XX_0,\XX_1,\ldots$ be a stationary multivariate stochastic process as in \eqref{eq:dynamics-SP} and for $\ttau\in[0,\infty)^d\setminus\mathbb 0$, let  $\uu_n(\ttau)$ be a sequence such that both \eqref{eq:def_un} and \eqref{eq:unvector} hold, for some $\hat\Gamma(\ttau)$. Assume further that conditions $\D(\uu_n)$ and $\D'(\uu_n)$ hold and that $\theta(\ttau)$ given by \eqref{def:thetan} is well defined. Then, we have
$$
\lim_{n\to\infty}\mu(\MM_n\leq \uu_n(\ttau))=\e^{-\theta(\ttau)\hat\Gamma(\ttau)}=\e^{-\Gamma(\ttheta\ttau)}=G(\ttau),
$$
where $G(\ttau)$ and $\Gamma(\ttau)$ satisfy the homogeneity property stated in \eqref{eq:hat-homogeneity}. Moreover, the marginal univariate extremal indices $\theta_j$, $j=1,\ldots,d$, can be recovered from $\theta(\ttau)$ by setting all coordinates of $\ttau$ equal to $0$ except for the $j$-th and $\Gamma$ satisfies \eqref{eq:bounds}.
\end{theorem}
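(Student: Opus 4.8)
The plan is to reduce the multivariate convergence to the kind of blocking argument that is standard in the univariate dynamical extreme value machinery, treating the event $A^{(0)}_n(\ttau)=\{\XX_0\not\leq\uu_n(\ttau)\}$ exactly as one treats the exceedance event $\{X_0>u_n\}$ in the scalar case, but now monitoring the whole vector $\XX_0$. Write $U_n(\ttau)=A^{(0)}_n(\ttau)$, so that $\{\MM_n\leq\uu_n(\ttau)\}=\mathscr W_{0,n}(U_n(\ttau))$, and recall from \eqref{eq:unvector} that $n\mu(U_n(\ttau))\to\hat\Gamma(\ttau)$. First I would split the time window $\{0,\dots,n-1\}$ into $k_n$ blocks of length $\lfloor n/k_n\rfloor$ (discarding a sublinear remainder), and inside each block isolate a gap of length $t_n$ at the end so that distinct blocks are $t_n$-separated. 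Iterating condition $\D(\uu_n)$ across these $k_n$ blocks — with the event $\A(\ttau)$ playing the role usually played by the exceedance — gives, up to an error $k_n\gamma(q,n,t_n)=o(1)$, that
$$
\mu\bigl(\mathscr W_{0,n}(\A(\ttau))\bigr)=\bigl(1-\mu\bigl(\mathscr W_{0,\lfloor n/k_n\rfloor}^c(\A(\ttau))\bigr)\bigr)^{k_n}+o(1).
$$
Then condition $\D'(\uu_n)$ is exactly what lets one linearise the block probability: by inclusion–exclusion and stationarity, $\mu(\mathscr W_{0,\lfloor n/k_n\rfloor}^c(\A(\ttau)))=\lfloor n/k_n\rfloor\,\mu(\A(\ttau))+O(\Delta^{(q)})$, so that
$$
\mu\bigl(\mathscr W_{0,n}(\A(\ttau))\bigr)=\exp\bigl(-n\,\mu(\A(\ttau))\bigr)+o(1)+(\text{error}\to 0\text{ as }q\to\infty).
$$
Next I would pass from the annulus event $\A(\ttau)$ back to $U_n(\ttau)$: the two differ only by the $q$-block tail $\{\XX_0\not\leq\uu_n,\ \exists\,1\le i\le q:\XX_i\not\leq\uu_n\}$, whose measure is controlled by the first $q$ terms $\sum_{i=1}^q\mu(U_n\cap f^{-i}U_n)$ (a tail that is absorbed in the $q\to\infty$ limit, consistently with how $\D'(\uu_n)$ and \eqref{def:thetan} both use a double limit), and one also checks $\mathscr W_{0,n}(\A(\ttau))$ and $\mathscr W_{0,n}(U_n(\ttau))$ coincide up to the same negligible set. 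Combining with \eqref{def:thetan}, which says $n\mu(\A(\ttau))\sim\theta(\ttau)\,n\mu(U_n(\ttau))\to\theta(\ttau)\hat\Gamma(\ttau)$ in the iterated limit, yields $\lim_n\mu(\MM_n\leq\uu_n(\ttau))=\e^{-\theta(\ttau)\hat\Gamma(\ttau)}$.

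For the remaining assertions: the identification $\e^{-\theta(\ttau)\hat\Gamma(\ttau)}=\e^{-\Gamma(\ttheta\ttau)}=G(\ttau)$ is definitional once we know the limit exists, since $\Gamma$ is \emph{defined} through the extreme value copula $C_H$ of the limit law and \eqref{def:EI} reads $\theta(\ttau)=\Gamma(\ttheta\ttau)/\hat\Gamma(\ttau)$; consistency with the classical extremal index then also forces $H_j(t_j)=t_j^{\theta_j}$ and hence $C_H(\tt^\ttheta)=H(\tt)$. For homogeneity, I would run the whole argument above with $c\ttau$ in place of $\ttau$, using a normalising sequence $\uu_n(c\ttau)$; the scalar inputs scale as $n\mu(X_{0j}>u_{nj}(c\tau_j))\to c\tau_j$ and $n\mu(U_n(c\ttau))\to\hat\Gamma(c\ttau)=c\,\hat\Gamma(\ttau)$ (the last equality from \eqref{eq:hat-homogeneity}), so $\theta(c\ttau)\hat\Gamma(c\ttau)$ must equal $\Gamma(\ttheta\, c\ttau)$; combined with $\hat\Gamma$'s homogeneity this gives $\Gamma(c\ttheta\ttau)=c\,\Gamma(\ttheta\ttau)$ and, letting $\ttau$ range, $\Gamma(c\ttau)=c\,\Gamma(\ttau)$, whence $G(c\ttau)=G(\ttau)^c$ and $\theta(c\ttau)=\theta(\ttau)$. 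The marginal recovery statement follows by setting all but the $j$-th coordinate of $\ttau$ to $0$: then $U_n(\ttau)$ degenerates to $\{X_{0j}>u_{nj}(\tau_j)\}$, conditions $\D,\D'$ restrict to their univariate counterparts, \eqref{def:thetan} becomes the usual dynamical formula for $\theta_j$, and the theorem collapses to \eqref{eq:Mmargs}; the bounds \eqref{eq:bounds} for $\Gamma$ follow from those for $\hat\Gamma$ together with $\theta(\ttau)\in[0,1]$ and monotonicity, e.g. $\Gamma(\ttau)=\theta(\ttau)\hat\Gamma(\ttheta^{-1}\cdot\ttheta\ttau)$-type manipulations, or more directly from $\Gamma$ being a stable dependence function of the copula $C_H$.

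The main obstacle I anticipate is the bookkeeping in the blocking step — specifically, making the iteration of $\D(\uu_n)$ rigorous when the relevant event is $\A(\ttau)$ rather than a single-time event, so that the "gap" of length $t_n$ between blocks genuinely decouples $\mathscr W$ over consecutive blocks; one must be careful that $\A(\ttau)$ itself reaches $q$ steps into the future, so the effective separation is $t_n$ (already built into $\D(\uu_n)$, which is stated with $\A(\ttau)$) and the block count is $\lfloor n/k_n\rfloor$ with $k_nt_n=o(n)$, exactly as in \eqref{eq:kn-sequence}. The second delicate point is the order of limits: every error term must be shown to vanish \emph{first} as $n\to\infty$ and only \emph{then} as $q\to\infty$, which is why $\D'(\uu_n)$, \eqref{def:thetan}, and the annulus-to-ball comparison are all phrased with the same $\lim_{q\to\infty}\limsup_{n\to\infty}$ structure; assembling them into a single coherent estimate is where the care is needed, but no new idea beyond the univariate template of \cite{FFT15,FFT21} should be required.
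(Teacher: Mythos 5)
Your blocking argument for the main convergence is essentially the paper's: the proof invokes \cite[Propositions~2.7 and~2.10]{FFT15} to pass from $\{\MM_n\le\uu_n(\ttau)\}$ to $\mathscr W_{0,n}(\A(\ttau))$ (with error $q\,\mu(A_n^{(0)}(\ttau)\setminus\A(\ttau))$, which already vanishes as $n\to\infty$ for each fixed $q$ since $\mu(A_n^{(0)}(\ttau))=\O(1/n)$ --- no absorption into the $q\to\infty$ limit is needed there) and then to $\left(1-\lfloor n/k_n\rfloor\mu(\A(\ttau))\right)^{k_n}$, with total error $2k_nt_n\mu(A_n^{(0)}(\ttau))+n\gamma(q,n,t_n)+\Delta^{(q)}(\uu_n(\ttau))$ killed in the iterated limit exactly as you describe (note the mixing error is $n\gamma(q,n,t_n)$ rather than $k_n\gamma(q,n,t_n)$, because the event being decoupled is itself a $\mathscr W$-event over a block; the condition $\D(\uu_n)$ is designed for this). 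The marginal recovery and the bounds \eqref{eq:bounds} also go as you sketch, the upper bound via $\A(\ttau)\subset\bigcup_{j}A_{nj}^{(q)}(\tau_j)$ and $n\mu(A_{nj}^{(q)}(\tau_j))\to\theta_j\tau_j$.

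The genuine gap is in the homogeneity. You write that $\theta(c\ttau)\hat\Gamma(c\ttau)=\Gamma(\ttheta c\ttau)$, ``combined with $\hat\Gamma$'s homogeneity,'' gives $\Gamma(c\ttheta\ttau)=c\,\Gamma(\ttheta\ttau)$ and ``whence \dots\ $\theta(c\ttau)=\theta(\ttau)$.'' This is circular: the identity only yields $\Gamma(c\ttheta\ttau)=c\,\theta(c\ttau)\hat\Gamma(\ttau)$, and to conclude that this equals $c\,\Gamma(\ttheta\ttau)=c\,\theta(\ttau)\hat\Gamma(\ttau)$ you need precisely $\theta(c\ttau)=\theta(\ttau)$, which is what is to be proved. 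Nor is the homogeneity of $\Gamma$ ``definitional'': $C_H$ is automatically max-stable only in the \iid setting (or under Nandagopalan's $\Delta$ condition); under the weaker conditions here it must be established. The paper's argument works directly on the formula \eqref{def:thetan} via a time change: one checks that $u_{ni}(c\tau_i)$ and $u_{\lfloor n/c\rfloor i}(\tau_i)$ are interchangeable, in the sense that $\mu(X_{0i}>u_{ni}(c\tau_i))\sim\mu(X_{0i}>u_{\lfloor n/c\rfloor i}(\tau_i))$, hence $\mu(\XX_0\not\leq\uu_n(c\ttau))\sim\mu(\XX_0\not\leq\uu_{\lfloor n/c\rfloor}(\ttau))$ and likewise for the annulus events; the ratio defining $\theta(c\ttau)$ at time $n$ is then asymptotically the ratio defining $\theta(\ttau)$ at time $\lfloor n/c\rfloor$, so the two double limits coincide. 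This substitution is the missing idea; once $\theta(c\ttau)=\theta(\ttau)$ is in hand, the homogeneity of $\Gamma$ and $G$ and the max-stability of $C_H$ follow as you indicate.
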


\begin{proof}We follow the proof of \cite[Corollary~2.4]{FFT15} closely, whose main steps we recap here since the conditions $\D(\uu_n)$ and $\D'(\uu_n)$ are somewhat different because they involve double limits.
We start by noting that by direct application of \cite[Proposition~2.7]{FFT15} we have
\begin{equation*}
\label{eq:estimate1}
\left|\mu(\MM_n\leq \uu_n(\ttau))-\mu\left(\mathscr W_{0,n}(\A(\ttau))\right)\right|\leq q\mu(A^{(0)}_n(\ttau)\setminus\A(\ttau))\xrightarrow[n\to\infty]{}0.
\end{equation*}
Now, from \cite[Proposition~2.10]{FFT15} we obtain
\begin{align*}
\label{eq:estimate2}
\left|\mu\left(\mathscr W_{0,n}(\A(\ttau))\right)-\left(1-\left\lfloor\frac{n}{k_n}\right\rfloor\mu(\A(\ttau))\right)^{k_n}\right|&\leq \Upsilon(q,n):=2k_nt_n\mu(A^{(0)}_n(\ttau))+n\gamma(q,n,t_n)\\
&\qquad+n\sum_{j=q+1}^{\lfloor n/k_n\rfloor-1}\mu\left( \A(\ttau)\cap f^{-j}\left(\A(\ttau)\right)\right).
\end{align*}
From the definitions of the sequences $(k_n)_n,(t_n)_n$ and conditions $\D(\uu_n)$ and $\D'(\uu_n)$ we have that $\lim_{q\to\infty}\lim_{n\to\infty}\Upsilon(q,n)=0$. Moreover, since \eqref{def:EI} and \eqref{eq:unvector} hold, we have that 
$$\lim_{q\to\infty}\lim_{n\to\infty}\left(1-\left\lfloor\frac{n}{k_n}\right\rfloor\mu(\A(\ttau))\right)^{k_n}=\lim_{q\to\infty}\lim_{n\to\infty}\left(1-\frac{\mu(\A(\ttau))}{\mu(A^{(0)}_n(\ttau))}\frac{n\mu(A^{(0)}_n(\ttau))}{k_n}\right)^{k_n}=\e^{-\theta(\ttau)\hat\Gamma(\ttau)}.$$

The homogeneity of $\hat\Gamma(\ttau)$ can be derived easily from that of $C_{\hat H}$ or $\hat G$ (see for example \cite[Equations (2.4) and (2.7)]{S12}). The homogeneity of $\Gamma$ and $G$ will follow once we show that
\begin{equation*}
\theta(c\ttau)=\theta(\ttau) \qquad\text{for all} \quad \ttau\in[0,\infty)^d\setminus \mathbb 0\quad\text{and}\quad c\in(0,\infty).
\end{equation*}
For that purpose, we start by noting that for $c>0$ and $\ttau$ as before, one can replace $\uu_n(c\ttau)$ by $\uu_{\lfloor n/c\rfloor}(\ttau)$. In fact, for all $i=1,\ldots,d$,
\begin{equation*}
\label{eq:unc-un}
\lim_{n\to\infty}\frac{\mu\left(X_{0i}>u_{ni}(c\tau_i)\right)}{\mu\left(X_{0i}>u_{\lfloor n/c\rfloor i}(\tau_i)\right)}=\lim_{n\to\infty}\frac{\frac cn\lfloor\frac nc\rfloor n\mu\left(X_{0i}>u_{ni}(c\tau_i)\right)}{cn\frac1n\lfloor\frac nc\rfloor\mu\left(X_{0i}>u_{\lfloor n/c\rfloor i}(\tau_i)\right)}=\frac{ c\tau_i}{c\tau_i}=1.
\end{equation*}
This means that
$$
\lim_{n\to\infty}\frac{\left|\mu\left(X_{0i}>u_{ni}(c\tau_i)\right)-\mu\left(X_{0i}>u_{\lfloor n/c\rfloor i}(\tau_i)\right)\right|}{\mu\left(X_{0i}>u_{\lfloor n/c\rfloor i}(\tau_i)\right)}=0.
$$
Noting that $\mu\left(X_{0i}>u_{ni}(c\tau_i)\right)=\O(1/n)$ and $\mu\left(X_{0i}>u_{\lfloor n/c\rfloor i}(\tau_i)\right)=\O(1/n)$, we conclude that $\left|\mu\left(X_{0i}>u_{ni}(c\tau_i)\right)-\mu\left(X_{0i}>u_{\lfloor n/c\rfloor i}(\tau_i)\right)\right|=\oo(1/n)$.
We observe now that this implies that
\begin{align*}
\frac{\left|\mu(\XX_0\not\leq\uu_n(c\ttau))-\mu(\XX_0\not\leq\uu_{\lfloor n/c\rfloor}(\ttau))\right|}{\mu(\XX_0\not\leq\uu_n(c\ttau))}&
\leq \frac{n\sum_{i=1}^d \left|\mu\left(X_{0i}>u_{ni}(c\ttau_i)\right)-\mu\left(X_{0i}>u_{\lfloor n/c\rfloor i}(\ttau_i)\right)\right|}{n\mu(\XX_0\not\leq\uu_n(c\ttau))}\\
&\xrightarrow[n\to\infty]{} \frac{0}{\hat\Gamma(\ttau)}=0,
\end{align*}
and therefore $\mu\left(\XX_0\not\leq\uu_n(c\ttau)\right)\sim\mu\left(\XX_0\not\leq\uu_{\lfloor n/c\rfloor}(\ttau)\right)$.
A similar argument also shows that $$\mu\left(\XX_0\not\leq\uu_n(c\ttau)\cap f^{-1}\left(\mathbb M_q\leq \uu_n(c\ttau)\right)\right)\sim\mu\left(\XX_0\not\leq\uu_{\lfloor n/c\rfloor}(\ttau)\cap f^{-1}\left(\mathbb M_q\leq \uu_{\lfloor n/c\rfloor}(\ttau)\right)\right).$$
Whence,
\begin{align*}
\theta(c\ttau)&=\lim_{q\to\infty}\lim_{n\to\infty}\frac{\mu\left(\XX_0\not\leq\uu_n(c\ttau)\cap f^{-1}\left(\mathbb M_q\leq \uu_n(c\ttau)\right)\right)}{\mu(\XX_0\not\leq\uu_n(c\ttau))}\\
&=\lim_{q\to\infty}\lim_{n\to\infty}\frac{\mu\left(\XX_0\not\leq\uu_{\lfloor n/c\rfloor}(\ttau)\cap f^{-1}\left(\mathbb M_q\leq \uu_{\lfloor n/c\rfloor}(\ttau)\right)\right)}{\mu(\XX_0\not\leq\uu_{\lfloor n/c\rfloor}(\ttau))}=\theta(\ttau).
\end{align*}
For the statement regarding the marginal extremal indices, take w.l.o.g.\ $\ttau=(\tau_1,0,\ldots,0)$ for some $\tau_1>0$ and observe that $\{\MM_n\leq \uu_n(\ttau)\}=\{M_{n1}\leq u_{n1}(\tau_1)\}$, $\A(\ttau)=\{X_{01}>u_{n1}(\tau_1),X_{11}\leq u_{n1}(\tau_1),\ldots X_{q1}\leq u_{n1}(\tau_1)\}=:A_{n1}^{(q)}(\tau_1)$, $A_{n}^{(0)}(\ttau)=\{X_{01}>u_{n1}(\tau_1)\}:=A_{n1}^{(0)}(\tau_1)$. Then by applying \cite[Corollary~2.4]{FFT15}, the univariate sequence $X_{01},X_{11},\ldots$ has a univariate extremal index $\theta_1$, say, that must coincide with $\theta((\tau_1,0,\ldots,0))$.

Regarding the bounds in \eqref{eq:bounds}, the first inequality follows trivially from observing that 
$
\{\MM_n\leq \uu_n(\ttau)\}\subset\{M_{nj}\leq u_{nj}(\tau_j)\}$  for all $j=1,\ldots,d$ and that from \cite[Corollary~2.4]{FFT15} we have $\mu(M_{nj}\leq u_{nj}(\tau_j))=\e^{-\theta_j\tau_j}$.

For the second inequality, start by observing that 
$$\Gamma(\ttheta\ttau)=\theta(\ttau)\hat\Gamma(\ttau)=\lim_{q\to\infty}\lim_{n\to\infty}-\log\left(1-\left\lfloor\frac{n}{k_n}\right\rfloor\mu(\A(\ttau))\right)^{k_n}.$$
Then since $-k_n\log\left(1-\left\lfloor\frac{n}{k_n}\right\rfloor\mu(\A(\ttau))\right)\sim n\mu\left(\A(\ttau)\right)$ and $\A(\ttau)\subset\bigcup_{j=1}^d A_{nj}^{(q)}(\tau_j)$, with the necessary adjustments to \cite[Corollary~2.4]{FFT15} we get
$$
n\mu\left(\A(\ttau)\right)\leq \sum_{j=1}^d n\mu\left(A_{nj}^{(q)}(\tau_j)\right)=\sum_{j=1}^d n\mu\left(A_{nj}^{(0)}(\tau_j)\right)\frac{\mu\left(A_{nj}^{(q)}(\tau_j)\right)}{\mu\left(A_{nj}^{(0)}(\tau_j)\right)}\to\sum_{j=1}^d\tau_j \theta_j.
$$
\end{proof}

\section{Applications to dynamical systems}
\label{sec:examples}

In order to produce spatial and temporal dependence between the components regarding their asymptotic extremal behaviour, we elaborate on and adapt to the multivariate setting an idea introduced in \cite{AFFR16} and further developed in \cite{AFFR17, FFRS20}, which consists of creating a dynamic link between the points of the maximising sets of the observables. 

In \cite{FFT12}, a connection between the appearance of clustering and periodicity was fully explored and understood. The point was that if the observable is maximised at a periodic point, then,  whenever the orbit enters in a ball around this point causing the observation of an exceedance of an high level, we are forced to return to a vicinity of the periodic point and possibly observe another exceedance of the same high level. In fact, the conditional probability of avoiding a return to the initial ball around the periodic point of period $p$, given that we hit the ball $p$ iterates before, gives us the value of the extremal index. 
In \cite{AFFR16}, by considering multiple maximising points which were dynamically linked, a fake periodic behaviour is introduced which produced clustering of high observations and, consequently, an extremal index less than 1.  The linked points $\tilde\zeta\neq\hat\zeta$ were such that the observable function reached its maximum value (possibly $\infty$) at these points, and where there exists $j\in\N$ for which $f^j(\tilde \zeta)=\hat \zeta$.  
The heuristic is clear: the dynamic link $f^j(\tilde \zeta)=\hat \zeta$ forces the orbits which first hit vicinities of $\tilde \zeta$ to visit vicinities of $\hat \zeta$ after $j$ iterations and therefore ensures the appearance of clusters of high observations. This meant that understanding the recurrence properties of the maximising set was crucial to anticipate clustering patterns, which was explored later in \cite{AFFR17}, for countably many maximising points, in \cite{FFRS20, FFS21} for maximising fractal sets (like Cantor sets) and in \cite{CFF24} to study clustering profiles.     

Contrary to the univariate setting, we have now different maximising sets corresponding to the different components. Their spatial connections, given by their intersecting (or overlapping) points, and their temporal connections, which appear when they share points of the same orbit, give rise to a plethora of different limiting component extremal interdependencies that we will study. 

 In our examples the extremes are realised on sets $\cZ= \cup_{i=1}^d\cZ_i$.
We will assume that $\psi_i(x) = g_i(d(x, \cZ_i))$ for a set $\cZ_i$, where $d$ is the induced Hausdorff metric.  As usual, in order for \eqref{eq:Mmargs} to hold, the $g_i$ should each be one of the three types, see \cite[(4.2.3)--(4.2.5)]{LFFF16} for the general case, examples of each are:  
\begin{equation}
(1)\ g_i(t) = -\log t; \  (2)\ g_i(t)= t^{-1/\alpha}; \  (3) \ g_i(t)=D- t^{1/\alpha},
\label{eq:types}
\end{equation}
where $\alpha>0$ and $D\in \R$.
We set $U^{(n)}(\ttau) := \{\XX_0\not\leq \uu_n\}$ (note that in this paper we also call this set $A_n^{(0)}(\ttau)$).   Moreover, we write $U_i^{(n)}(\tau_i):=  \{\XX_{0i}>u_{ni}(\tau_i)\}$, so $U^{(n)}(\ttau) = \cup_{i=1}^d U_i^{(n)}$.

For us to be able to satisfy \eqref{eq:def_un} and \eqref{def:thetan}, we will need some regularity of our measure and our observables.  These will generally involve the scaling of the measures of the sets $U_i^{(n)}(\tau_i),$ $A_n^{(q)}(\ttau)$, and preimages of (parts of) such sets.  To keep the setup flexible, we refer to the required properties as `regularity', but note that there will be concrete examples where these will be satisfied, for example when $\PPsi$ is continuous in some $B_\eps(\zeta)\sm \{\zeta\}$ and $\mu$ has a smooth density with respect to Lebesgue at $\zeta\in \cZ$.

We also need to check that conditions  $\D(\uu_n)$ and $\D'(\uu_n)$ hold. Here, is where we take advantage of the design of the conditions, which were purposely adapted to the dynamical setting. These conditions are not much different from the ones in the multidimensional setting of \cite{FFM20,FFT25} and follow in practically the same way. 

\begin{remark}
We observe that the connection between the shapes of $g_i$ and the tail of distribution functions associated to the random variables involved and the respective domains of attraction for maxima is explained carefully in \cite[Section~4.2]{LFFF16}. In particular, depending on the specific type of tail that applies, there are natural sequences $u_{ni}$ associated with the respective linear normalisation from the classical theory of Extremes. However, we point out that all that we require from the $u_{ni}$ is that \eqref{eq:def_un} holds. This means that the particular form of the linear normalisation defining the $u_{ni}$ is not important in this setting, but instead the requirement is that such normalising sequences fulfilling  \eqref{eq:def_un} do exist. In particular, one can use the respective quantile distribuiton functions to obtain such sequences, for example.
\end{remark}

Condition $\D(\uu_n)$ follows easily from sufficiently fast decay of correlations (summable rates are enough). We refer to the discussions in \cite[Section 5.1]{F13} or \cite[Section 4.4]{LFFF16}, for non-uniformly expanding systems or to \cite[Section 2]{GHN11}, for higher dimensional systems with contracting directions.

Condition $\D'(\uu_n)$ is typically more involved because it depends a lot on the recurrence properties of the maximal set, which often requires a local analysis of the dynamics there. However, if the systems have a strong form of decay of correlations, like uniformly expanding systems do, it can actually be easily checked globally see (\cite[Section 4.2.4]{LFFF16}). 

To illustrate the preceding discussion, we state the following proposition, without aiming at the fullest generality, as it covers most of the forthcoming examples. But first, we need to introduce a few more notations and definitions.

Let $\mathcal{C} \subset L^1(\mu)$ be a Banach space of real-valued measurable functions defined on $\mathcal{X}$. We say that we have {\em summable decay of correlations} for observables in $\mathcal{C}$ against $L^1(\mu)$ if there exists a sequence $(\rho_n)_{n \ge 1}$ of positive real numbers with $\sum_{n \ge 1} \rho_n < \infty$ such that for all $\phi \in \mathcal{C}$ and all $\psi \in L^1(\mu)$,
\begin{equation*}
\left|\int \phi (\psi \circ f^n) d\mu - \int \phi d \mu \int \psi d \mu \right| \le \rho_n \|\phi \|_{\mathcal{C}} \| \psi \|_{L^1_\mu}.
\end{equation*}

For some $x\in \mathcal{X}$ and a measurable subset $A\subset \mathcal{X}$, we define the {\em first hitting time} of $x$ to $A$ as
\begin{equation*}
r_A(x) = \inf \left\{ j \ge 1 \, : \, f^j(x) \in A \right\},
\end{equation*}
and the {\em first return} from $A$ to $A$ as
\begin{equation*}
R(A) = \inf_{x \in A} r_A(x).
\end{equation*}

\begin{proposition}
\label{prop:applications}
Let $f: \mathcal X \to \mathcal X$ be a dynamical system on $\mathcal{X} = [0,1]$, preserving the probability measure $\mu$ absolutely continuous with respect to Lebesgue, with summable decay of correlations for observables in the Banach space $\mathcal{C} \subset L^1(\mu)$ against $L^1(\mu)$. 
Assume that each $\cZ_i$ is a finite collection of points such that the dynamics $f$ is continuous in a neighbourhood of each point of $\cZ_i$. We also require the density of $\mu$ exists, is continuous and lies in $(0,\infty)$ in a neighbourhood of such points.
Assume that $\psi_i(x) = g_i(d(x, \cZ_i))$ where $g_i$ is of the form specified in \eqref{eq:types} and that $\theta(\ttau)$ given by \eqref{def:thetan} is well defined. 
Assume furthermore that there exists $p \ge 0$ such that for all $\ttau \in [0,\infty)^d \setminus \mathbb 0$
\begin{equation} \label{eq:R_cond}
p = \min \left\{ j \ge 0 \, : \lim_{n \to \infty} R(A_n^{(j)}(\ttau)) = \infty \right\}    
\end{equation}
and that $\mathcal{C}$-norms of the family of indicator functions $\left( \mathds{1}_{A_n^{(p)}(\ttau)}\right)_{n \ge 1}$ are uniformly bounded.

Then the assumptions of Theorem \ref{thm:distributional-convergence} hold and in particular we have 
$$
\lim_{n\to\infty}\mu(\MM_n\leq \uu_n(\ttau))=\e^{-\theta(\ttau)\hat\Gamma(\ttau)},
$$
where $\hat\Gamma(\ttau)$ and $\theta(\ttau)$ are given by \eqref{eq:unvector} and \eqref{def:thetan} respectively.
\end{proposition}

\begin{remark} We refer to \cite{FFM18} for example for a discussion on the possible choices for the Banach space $\mathcal{C}$. We simply remark that in the case of uniformly expanding piecewise $C^2$ maps with a mixing measure absolutely continuous with respect to Lebesgue (\emph{acip}), it is enough to consider $\mathcal{C} = {\rm BV}$ the space of functions with bounded total variation on $[0,1]$. Furthermore, we notice that \eqref{eq:R_cond} is trivially satisfied for the doubling and tripling map, and that for more general maps a continuity argument and the Hartman-Grobman Theorem can be used to show the convergence if the system is continuous along the orbits of the points of $\cZ_i$. We also note that higher-dimensional uniformly expanding examples can in principle be treated, since suitable choices of the Banach space $\mathcal{C}$ are available for this setting. However, to keep the presentation as simple as possible, we chose to state Proposition \ref{prop:applications} only at the present level of generality. In the higher-dimensional case, an additional difficulty arises in verifying the existence of the limit \eqref{def:thetan} defining $\theta(\ttau)$
(see Remark \ref{rmk:measure}).
\end{remark}

\begin{remark}   \label{rmk:slow_mixing}
Moreover, adapting \cite[Section 5, Theorem 2.C]{FFM18} we can show that if the system admits an induced system for which we can check  conditions $\D(\uu_n)$ and $\D'(\uu_n)$, then the conclusion of Theorem~\ref{thm:distributional-convergence} applies to both the induced and the original system, which means that we can immediately apply our findings to slowly mixing systems (with non-summable rates), such as Manneville-Pomeau type of maps  with indifferent fixed points as well as some quadratic maps, both with acips.  
We refer to \cite[Section~4.2]{FFM20} for a list of systems for which we can apply our results.
\end{remark}

 In many of the examples in this section, we will work out the theory in the general settings just described, for example giving a formula for $\theta(\ttau)$, and then to clarify the phenomena at play and provide more concrete formulae we restrict to the specific example of the doubling map $f:x\mapsto 2x \mod 1$ or the tripling map $f:x\mapsto 3x \mod 1$ on $[0,1]$ with Lebesgue as the invariant measure $\mu$: we use the latter when either we need a fixed point where $f$ is a diffeomorphism in a 2-sided neighbourhood or when we need a map of degree greater than two.  Note that these systems satisfy the assumptions of Proposition \ref{prop:applications}. 

The distinctive features of the dependence functions obtained in each example will be analysed in light of the spatial and temporal connections between the maximising sets $\mathcal Z_i$, $i=1,\ldots d$, namely, the geometric links created by their overlaps (which occur when $\mathcal Z_i\cap\mathcal Z_j\neq\emptyset$, for $i\neq j\in\{1,\ldots,d\}$) and the temporal links created by their inter-recurrence properties (which appear when $\mathcal Z_i\cap f^{-k}(\mathcal Z_j)\neq \emptyset$, for $i\neq j\in\{1,\ldots,d\}$ and $k\in\N$), which will give us some understanding about the mechanisms responsible for the creation of componentwise extremal limiting dependence.  

 \begin{remark}
     \label{rmk:gibbs}
     We can do calculations similar to the ones below when we have a Gibbs measure $\mu$, absolutely continuous with respect to a conformal measure $m$ and such that $\frac{d\mu}{dm}(\zeta_i)\in (0, \infty)$, see \cite[Lemma 3.1]{FFT12} for the implications of this fact and \cite[Section 7.3]{FFT15} to see that the condition can be commonly satisfied.
 \end{remark}

\subsection{Points not dynamically linked}

We start by illustrating the theory with the elementary case of bivariate processes arising from observables where the components are maximised at the same point or at two distinct points with no dynamical link between them, which in particular means that the common point is not periodic. 

\subsubsection{Common non-periodic maximal point}

Let $\cZ=\{\zeta\}$ and $\psi_i(x)=g_i(\dist(x,\zeta))$, for $i=1,2$, where  $\bigcup_{j\geq 1} \left( \cZ \cap f^{-j}(\cZ)\right)=\emptyset$. Assuming that $g_i$ is as in \eqref{eq:types} and $\mu$ is sufficiently regular then for every $\ttau=(\tau_1,\tau_2)\in(0,\infty)^2$ there exists $(\uu_n(\ttau))_{n}$ such that \eqref{eq:def_un} holds.

Observe that $\{X_{0i}>u_{ni}(\tau_i)\}=B_{g_i^{-1}(u_{ni}(\tau_i))}(\zeta)$, $i=1,2$, where $B_\eps(\zeta)$ denotes a ball of radius $\eps$ around $\zeta$ and, therefore, whenever $\tau_1<\tau_2$  we must have that for all $n$ sufficiently large $\{X_{01}>u_{n1}(\tau_1)\}\subset\{X_{02}>u_{n2}(\tau_2)\}$, and vice-versa if $\tau_1>\tau_2$.
It follows that 
\begin{align*}
\hat\Gamma(\ttau)&=\lim_{n\to\infty}n\mu(\XX_0\not\leq \uu_n(\ttau))=\lim_{n\to\infty}n\mu\left(\bigcup_{i=1}^2\{X_{0i}>u_{ni}(\tau_i)\}\right)
=\lim_{n\to\infty}n\sum_{i=1}^2\mu\left(X_{0i}>u_{ni}(\tau_i)\right)\\
&\hspace{2cm}-\lim_{n\to\infty}n\mu\left(\bigcap_{i=1}^2\{X_{0i}>u_{ni}(\tau_i)\}\right)=\tau_1+\tau_2-\min\{\tau_1,\tau_2\}=\max\{\tau_1,\tau_2\}.
\end{align*}

This means that in this case we have perfect association since the lower bound in \eqref{eq:bounds} is achieved and $\hat H(\tt)=C_{\hat H}(\tt)=\min\{t_1,t_2\}$. Moreover, assuming that the dynamical system has sufficiently fast decay of correlations, we have that both conditions $\D(\uu_n(\ttau))$ and $\D'(\uu_n(\ttau))$ hold, where in fact one can show that $\Delta^{(0)}(\uu_n(\ttau))=0$, which implies that $\theta(\ttau)=1$ and therefore in this case $\Gamma(\ttau)=\hat\Gamma(\ttau)$, $G(\ttau)=\e^{-\max\{\tau_1,\tau_2\}}$ and we also have $H(\tt)=C_{H}(\tt)=\min\{t_1,t_2\}$. Note that we can also write 
$$\Gamma(\ttau)=(\tau_1+\tau_2)\left(1-\min\left\{\frac{\tau_1}{\tau_1+\tau_2},\frac{\tau_2}{\tau_1+\tau_2}\right\}\right),$$
which means that $D(\alpha)=1-\min\{\alpha,1-\alpha\}=\max\{\alpha,1-\alpha\}$.

To make our example slightly more concrete, suppose that  $\mu(B_r(\zeta)) \sim c r^d$, $g_1(t)= -\log t$ and $g_2(t) = t^{-1}$.  Then in the above argument we can take $u_{n1}(\tau_1) = \frac1d\left(\log(cn)-\log\tau_1\right)$ and  $u_{n2}(\tau_2) = \left(\frac{cn}{\tau_2}\right)^{\frac1d}$. 

Note that, in this case, there is a complete overlap between the maximal sets $\mathcal Z_1=\mathcal Z_2$, but no temporal connection exists between them, which explains the Extremal Index function being equal to 1. This combination is reflected in a stable dependence function corresponding to a perfect association between the two components, meaning that whenever we see an extreme observation in one of the components, then the other one will show the same type of behaviour.

\subsubsection{Distinct non-linked maximal points}
\label{sssec:dnlmax}

For $\zeta_1\neq \zeta_2$ let $\cZ_1=\{\zeta_1\}$, $\cZ_2=\{\zeta_2\}$ and $\psi_i(x)=g_i(\dist(x,\cZ_i))$, for $i=1,2$, where $\bigcup_{j\geq 1} \left( \cZ \cap f^{-j}(\cZ)\right)=\emptyset$, with $\cZ = \cZ_1\cup\cZ_2$. Assume that $g_i$, $\mu$ and $(\uu_n(\ttau))_{n}$ are as above.

Observe that $\{X_{0i}>u_{ni}(\tau_i)\}=B_{g_i^{-1}(u_{ni}(\tau_i))}(\zeta_i)$ for $i=1,2$ and since $\zeta_1$ and $\zeta_2$ are distinct then for every fixed $\ttau$ and $n$ sufficiently large we have $\{X_{01}>u_{n1}(\tau_1)\}\cap\{X_{02}>u_{n2}(\tau_2)\}=\emptyset$. It follows that 
\begin{align*}
\hat\Gamma(\ttau)& =\lim_{n\to\infty}n\mu(\XX_0\not\leq \uu_n(\ttau))=\lim_{n\to\infty}n\mu\left(\bigcup_{i=1}^2\{X_{0i}>u_{ni}(\tau_i)\}\right)\\
&
=\lim_{n\to\infty}n\sum_{i=1}^2\mu\left(X_{0i}>u_{ni}(\tau_i)\right)
-\lim_{n\to\infty}n\mu\left(\bigcap_{i=1}^2\{X_{0i}>u_{ni}(\tau_i)\}\right)=\tau_1+\tau_2.
\end{align*}
This means that in this case we have asymptotic extremal independence and $\hat H(\tt)=C_{\hat H}(\tt)=t_1\cdot t_2$. Moreover, assuming that the dynamical system has sufficiently fast decay of correlations, we have that condition both conditions $\D(\uu_n(\ttau))$ and $\D'(\uu_n(\ttau))$ hold, where in fact one can show that $\Delta^{(0)}(\uu_n(\ttau))=0$, which implies that $\theta(\ttau)=1$ and therefore in this case $\Gamma(\ttau)=\hat\Gamma(\ttau)$, $G(\ttau)=\e^{-(\tau_1+\tau_2)}$ and we also have $H(\tt)=C_{H}(\tt)=t_1\cdot t_2$. Clearly, $D(\alpha)=1$, in this case.

Observe that in this case there is no overlap or any time connection between the maximal sets $\mathcal Z_1=\mathcal Z_2$, which translates into complete independence regarding the extremal limiting behaviour of the two components. This is reflected by an Extremal Index function and Pickands dependence function both equal to $1$. In other words, the observation of an extreme value in one component is asymptotically independent of the occurrence of an extreme value on the other component. 

\subsection{Distinct linked points}

In this case we will build up a lot of the notation and machinery also required for later cases.  Assume $\cZ_1=\{\zeta\}$ and $\cZ_2=\{f(\zeta)\}$.  

\subsubsection{Non-periodic case}
\label{subsec:non-periodic-case-no-overlap}

Assume first that $\{\zeta, f(\zeta)\} \cap \{\cup_{n\ge 2} f^n(\zeta)\} = \es$.  So in particular, given $q\in \N$ and $\ttau=(\tau_1, \tau_2)$, for sufficiently large $n$, $x\in U_2^{(n)}(\tau_2)$ implies 
\begin{equation}
\{f(x), \ldots, f^q(x)\}\cap  \left(U_1^{(n)}(\tau_1)\cup  U_2^{(n)}(\tau_2)\right)=\es \implies A_n^{(q)}(\ttau) = A_n^{(1)}(\ttau).
\label{eq:dlA1}
\end{equation}

Given $\alpha\in [0, 1]$, define 
$$\theta_\zeta(\alpha):= \lim_{n\to \infty} \frac{\mu\left(U_1^{(n)}(\alpha)\sm f^{-1}U_2^{(n)}(1-\alpha)\right)}{\mu\left(U_1^{(n)}(\alpha)\right)},$$
and observe that, since the measure $\mu$ is absolutely continuous with respect to Lebesgue, with a one-dimensional density continuous at the points of $\cZ_i$, by regulatity, we have, for $\alpha= \frac{\tau_1}{\tau_1+\tau_2}$, 
\begin{equation} \label{eq:theta_regular}
    \theta_\zeta(\alpha):= \lim_{n\to \infty} \frac{\mu\left(U_1^{(n)}(\tau_1)\sm f^{-1}U_2^{(n)}(\tau_2)\right)}{\mu\left(U_1^{(n)}(\tau_1)\right)},
\end{equation}
so $\theta_\zeta(\alpha)$ represents the asymptotic probability, given $x\in U_1^{(n)}(\tau_1)$, that $f(x)\notin U_2^{(n)}(\tau_2)$, and since $n$ can be assumed large, that $f(x)\notin U_1^{(n)}(\tau_1)\cup U_2^{(n)}(\tau_2)$.  
We will use this type of idea throughout our examples.

 Then $\XX_0=\XX_0(x)\nleq \uu_n(\tau_1, \tau_2)$ means that $x\in U_1^{(n)}(\tau_1)\cup U_2^{(n)}(\tau_2)$, which implies, for all large $n$,

\begin{enumerate}
\item since 
$$\mu\left(X_{01}>u_{n1}(\tau_1)|\XX_0(x)\nleq \uu_n(\tau_1, \tau_2)\right)=\frac{\mu\left(U_1^{n})(\tau_1)\right)}{\mu\left(U_1^{n})(\tau_1)\cup U_n^{(2)}(\tau_2)\right)}\sim \frac{\tau_1}{\tau_1+\tau_2}=\alpha,$$  with asymptotic probability $\alpha$ we have $X_{01}>u_{n1}(\tau_1)$ (i.e., $x\in U_1^{(n)}(\tau_1)$).  Then $X_{11}< u_{n1}(\tau_1)$ with probability 1, but $X_{12}< u_{n2}(\tau_2)$ (i.e., $x \in U_1^{(n)}(\tau_1)\sm f^{-1}U_2^{(n)}(\tau_2)$) with probability $\theta_\zeta(\alpha)$;
\item  with asymptotic probability $1-\alpha$ we have $X_{02}>u_{n2}(\tau_2)$ (i.e., $x\in U_2^{(n)}(\tau_2)$).  Then $X_{11}< u_{n1}(\tau_1)$ and $X_{12}< u_{n2}(\tau_2)$ with probability 1 (since $f(x) \notin U_2^{(n)}(\tau_2)\cup U_1^{(n)}(\tau_1)$).
\end{enumerate}
As in \eqref{eq:dlA1}, there are no other entries of importance up to time $q$.
So summing the probabilities gives
$$\theta(\alpha, 1-\alpha)= \alpha\theta_\zeta(\alpha)+ 1-\alpha
.$$

To find $\theta_\zeta(\alpha)$, we observe that $U_1^{(n)}(\alpha)= B_{g_1^{-1}(u_{n1}(\alpha))}(\zeta)$ and $U_2^{(n)}(1-\alpha)= B_{g_2^{-1}(u_{n2}(1-\alpha))}(f(\zeta))$.  Moreover, if $f$ is conformal then $f^{-1}U_2^{(n)}(1-\alpha)$ is an approximate ball of radius $\frac1{|Df(\zeta)|}g_2^{-1}(u_{n2}(1-\alpha))$, so we are left to find the relative (to $U_1^{(n)}(\alpha)$) measure of the annulus
$$A_{\frac1{|Df(\zeta)|} g_2^{-1}(u_{n2}(1-\alpha)), g_1^{-1}(u_{n1}(\alpha)},$$
assuming that the inner radius is strictly smaller than the outer radius so that this makes sense.
In a setting where $\mu$ is an acip with density $\rho$, $f$ is conformal  and the observables are all of the same form, for $c_d$ the volume of the $d$-dimensional unit ball, the measure of our annulus is asymptotically $\frac1{c_d}\left(\frac{\alpha}{n\rho(\zeta)}- \frac1{|Df(\zeta)|}\frac{1-\alpha}{n\rho(f(\zeta))}\right)$ (we are assuming that $\frac1{|Df(\zeta)|}\frac{1-\alpha}{c_d n\rho(f(\zeta))}< \frac{\alpha}{c_d n\rho(\zeta)}$, if not then $\theta_\zeta(\alpha, 1-\alpha)=0$).  Since $\mu(U_1^{(n)}(\alpha))\sim \frac\alpha{c_dn\rho(\zeta)}$, 
hence,
$$\theta_\zeta(\alpha)= 
\max\left\{0,  1- \frac{\rho(\zeta)}{\rho(f(\zeta))}\frac{1-\alpha}{\alpha}\frac1{|Df(\zeta)|}\right\}.$$

Thus 
 $$\theta\left(\alpha, 1-\alpha\right)=\alpha\max\left\{0, 1- \frac{\rho(\zeta)}{\rho(f(\zeta))}\frac{1-\alpha}{\alpha}\frac1{|Df(\zeta)|}\right\}+ 1-\alpha.
 $$
In the doubling map case with Lebesgue measure, this becomes
\[\theta(\alpha, 1-\alpha)= \begin{cases} 1-\alpha & \text{ if } \alpha \le \frac13,\\
\frac{1+\alpha}2 & \text{ if } \alpha > \frac13,
\end{cases}
\qquad
G(\tau_1, \tau_2)= \begin{cases} e^{-\tau_2} & \text{ if } \tau_1 \le \frac{\tau_2}2,\\
e^{-\left(\tau_1+\frac{\tau_2}2\right)} & \text{ if }\tau_1 > \frac{\tau_2}2.
\end{cases}
\]

Where $G$ is obtained by adding in the $\hat H$ term from Section~\ref{sssec:dnlmax}.  Moreover, we see here that $\theta_1=\theta_2=1$, so $\Gamma(\ttau)= \theta(\ttau)\hat\Gamma(\ttau)$ and $D(\alpha) = \Gamma(\alpha, 1-\alpha)= \theta(\alpha, 1-\alpha)$, see Figure~\ref{fig:examples22}. 

Observe that in this case there is no spatial link because the two maximal sets have an empty intersection which, in particular, implies that $\hat\Gamma(\ttau)=\tau_1+\tau_2$, corresponding to the independent case. However, there is now a temporal link between the sets since $\mathcal Z_1\cap f^{-1}(\mathcal Z_2)\neq \emptyset$. This link creates a clustering profile which is captured by the Extremal Index function which is not constant equal to $1$ anymore (although the componentwise Extremal Indices $\theta_1, \theta_2$ are still equal to $1$). Hence, we have cross inter-recurrences but not intra-recurrences. The interesting point is that the stable dependence function $\Gamma(\ttau)$ and the respective Pickands dependence function show that the temporal link also creates dependence between the components' extremal behaviour. Also, note that Pickands dependence function is asymmetric reflecting the fact that there is temporal link taking the orbit from close to $\mathcal Z_1$ to close to $\mathcal Z_2$, but not the other way around. In fact, the clustering profile responsible for the asymmetric Pickands dependence function tells us that an extreme value on the first component is likely to be followed by another high observation in the second component, but the occurrence of an extreme value on the second component is not followed by any high observation in any of the components.

\begin{remark}
    \label{rmk:measure} 
    In order to guarantee that \eqref{eq:theta_regular} holds, we used that the measure $\mu$ is absolutely continuous with respect to the one-dimensional Lebesgue measure on an interval and has a density which is positive, finite and continuous in a neighbourhood of the points of $\cZ_i$. For more general higher-dimensional systems, say in the two dimensional case, one should impose conditions along the following lines. Given $A \subset \mathcal{X}$, $x \in \mathcal{X}$ and $\lambda > 0$, define $$(\lambda,x)(A) = \left\{x + ty \in \mathcal{X}\, : \, y \in A \quad {\rm and } \quad t = \lambda \| y -x \| \right\}.$$
    We require for any $A, B$ subsets of $\mathcal{X}$ with boundaries consisting of finite pieces of Jordan curves, for $c > 0$,
    $$\lim_{\lambda \to 0} \frac{\left( \frac{\mu((c \lambda, x)(A) \setminus (c \lambda, x)(B))}{\mu((c \lambda, x)(A))}\right)}{ \left( \frac{\mu((\lambda, x)(A) \setminus (\lambda,x)(B))}{ \mu((\lambda, x)(A))} \right)} = 1,$$
    and, for $A$ as above, for all $\epsilon > 0$ there exists $\delta > 0$ such that for all small $\lambda > 0$,
    $$ \left| \left( \frac{\mu((\lambda(1-\delta), x)(A) \setminus (\lambda, x)(A))}{\mu((\lambda,x)(A))}\right) -1\right| < \epsilon.$$
\end{remark}

\subsubsection{Periodic case}
\label{ssec:dlp_per}

Next we assume that $\cZ_1=\{\zeta\}$, $\cZ_2=\{f(\zeta)\}$ with $f^2(\zeta)=\zeta$ (and $f(\zeta)\neq \zeta$).  As we will see below, in contrast to \eqref{eq:dlA1}, here we need only consider $A_n^{(2)}(\ttau)$.  Then $\XX_0\nleq \uu_n(\tau_1, \tau_2)$ implies, for all large $n$ and $\alpha$ as above,

\begin{enumerate}
\item with asymptotic probability $\alpha$ we have $X_{01}>u_{n1}(\tau_1)$ (i.e., $x\in U_1^{(n)}(\tau_1)$).  Then $X_{11}< u_{n1}(\tau_1)$ with probability 1 (since $f(x)\notin U_1^{(n)}(\tau_1)$), but $X_{12}< u_{n2}(\tau_2)$ only if $x\in U_1^{(n)}(\tau_1)\sm f^{-1}U_2^{(n)}(\tau_2)$; then $X_{21}<u_{n1}(\tau_1)$ only if $x\in U_1^{(n)}(\tau_1)\sm f^{-2}U_1^{(n)}(\tau_1)$; in total to evaluate the probability of the event $\{\XX_1<\uu_n(\ttau)| X_{01}>u_{n1}(\tau_1)\}$  we require the relative probability that $x\in U_1^{(n)}(\tau_1)\sm \left(f^{-1}U_2^{(n)}(\tau_2)\cup f^{-2}U_1^{(n)}(\tau_1)\right)$, i.e., the ratio of the measure of this set compared to the measure of $U_1^{(n)}(\tau_1)$;

\item  with asymptotic probability $1-\alpha$ we have $X_{02}>u_{n2}(\tau_2)$ (i.e., $x\in U_2^{(n)}(\tau_2)$).  Then $X_{12}< u_{n2}(\tau_2)$ with probability 1 (since $f(x)\notin U_2^{(n)}(\tau_2)$), but $X_{11}< u_{n1}(\tau_1)$ only if $x\in U_2^{(n)}(\tau_2)\sm f^{-1}U_1^{(n)}(\tau_1)$; then $X_{22}<u_{n2}(\tau_2)$ only if $x\in U_2^{(n)}(\tau_2)\sm f^{-2}U_2^{(n)}(\tau_2)$; in total to evaluate the probability of the event $\{\XX_1<\uu_n(\ttau)| X_{02}>u_{n2}(\tau_2)\}$ we require the relative probability that  $x\in U_2^{(n)}(\tau_2)\sm \left(f^{-1}U_1^{(n)}(\tau_1)\cup f^{-2}U_2^{(n)}(\tau_2)\right)$,  i.e., the ratio of the measure of this set compared to the measure of $ U_2^{(n)}(\tau_2)$.
\end{enumerate}

We can see that for given $q\in \N$, for any large $n$, $A_n^{(q)}(\ttau)= A_n^{(2)}(\ttau)$.
So setting 
$$\theta_\zeta(\alpha):= \lim_{n\to \infty}\frac{\mu\left(U_1^{(n)}(\alpha)\sm \left(f^{-1}U_2^{(n)}(1-\alpha)\cup f^{-2}U_1^{(n)}(\alpha)\right)\right)}{\mu\left(U_1^{(n)}(\alpha)\right)}$$
$$\theta_{f(\zeta)}(\alpha):= \lim_{n\to \infty} \frac{\mu\left(U_2^{(n)}(1-\alpha)\sm \left(f^{-1}U_1^{(n)}(\alpha)\cup f^{-2}U_2^{(n)}(1-\alpha)\right)\right)}{\mu\left(U_2^{(n)}(1-\alpha)\right)},$$
we obtain
$$\theta(\alpha, 1-\alpha) = \alpha \theta_\zeta(\alpha)+ (1- \alpha) \theta_{f(\zeta)}(\alpha).$$

To compute the annuli here is more involved since we need to incorporate the relative strengths of the derivatives $Df(\zeta)$ and $Df^2(\zeta)$ for $\theta_\zeta$ and  $Df(f(\zeta))$ and $Df^2(f(\zeta))$ for $\theta_{f(\zeta)}$, as well as the relative sizes of $\alpha$ and $1-\alpha$.  If $f$ is conformal, for $\theta_\zeta(\alpha, 1-\alpha)$ we consider the relative size of the annulus Lebesgue measure
$$\frac\alpha{n\rho(\zeta)} - \max\left\{\frac1{|Df(\zeta)|}\frac{1-\alpha}{n\rho(f(\zeta))}, \frac1{|Df^2(\zeta)|}\frac{\alpha}{n\rho(\zeta)}\right\},$$
which gives
$$\theta_\zeta(\alpha)=\max\left\{0,  1 - \max\left\{\frac1{|Df(\zeta)|}\frac{1-\alpha}\alpha \frac{\rho(\zeta)}{\rho(f(\zeta))}, \frac1{|Df^2(\zeta)|}\right\}\right\}$$
Similarly, 
$$\theta_{f(\zeta)}(\alpha)= \max\left\{0, 1 - \max\left\{\frac1{|Df(f(\zeta))|}\frac\alpha{1-\alpha} \frac{\rho(f(\zeta))}{\rho(\zeta)}, \frac1{|Df^2(\zeta)|}\right\}\right\}.$$
Therefore in this case,
\begin{align*} \theta(\alpha, 1-\alpha) &= \alpha\max\left\{0,  1 - \max\left\{\frac1{|Df(\zeta)|}\frac{1-\alpha}\alpha \frac{\rho(\zeta)}{\rho(f(\zeta))}, \frac1{|Df^2(\zeta)|}\right\}\right\}\\
&\qquad +(1-\alpha)  \max\left\{0, 1 - \max\left\{\frac1{|Df(f(\zeta))|}\frac\alpha{1-\alpha} \frac{\rho(f(\zeta))}{\rho(\zeta)}, \frac1{|Df^2(\zeta)|}\right\}\right\}.
\end{align*}
In the doubling map case with Lebesgue measure (here $\zeta=1/3$), we compute
\[  \theta(\alpha, 1-\alpha)= \begin{cases} \frac34(1-\alpha) & \text{ if } \alpha \le \frac13,\\
\frac12 & \text{ if } \alpha \in (\frac13, \frac23],\\
\frac{3\alpha}4 & \text{ if } \alpha > \frac23,
\end{cases}
\qquad
G(\tau_1, \tau_2)= \begin{cases} e^{-\frac34\tau_2} & \text{ if }  \tau_1 \le \frac{\tau_2}2,\\
e^{-\frac12(\tau_1+\tau_2)} & \text{ if } \tau_1 \in (\frac{\tau_2}2, 2\tau_2],\\
e^{-\frac{3\tau_1}4} & \text{ if } \tau_1 > 2\tau_2.
\end{cases}
\]

Where we obtained $G$ by using the $\hat H$ term from Section~\ref{sssec:dnlmax} (recall that $\hat\Gamma(\ttau)=\tau_1+\tau_2$).  Here we compute $\theta_1=\theta_2=3/4$, so $\Gamma(\ttau)=\frac43 \theta(\ttau)\hat \Gamma(\ttau)$. As usual 
$D(\alpha) = \Gamma(\alpha, 1-\alpha)$, giving 
$$D(\alpha)= \begin{cases} 
1-\alpha & \text{ if } \alpha \in [0,1/3],\\
\frac23 & \text{ if } \alpha \in (1/3, 2/3],\\
\alpha & \text{ if } \alpha \in (2/3, 1].
\end{cases} $$ 
See Figure~\ref{fig:examples22}.

As in the previous subsection, in this case there is no spatial connection between the maximal sets and therefore $\hat\Gamma(\ttau)$ is the same and corresponds to the independent case. As before, there is a time link between the two maximal sets, but the main difference is that, now, this link is symmetric in the sense that if the orbit is close to $\mathcal Z_1$, then it will be close $\mathcal Z_2$ in the following iterate and vice-versa, due to periodicity. The Extremal Index function is now symmetric which in turn implies that $\Gamma(\ttau)$ is symmetric, as well as the corresponding Pickands dependence function. The clustering profile in this case is as follows: a high observation in one component is likely to be followed by another one in the other component, which is then followed by another high value in the first component and the cycle repeats itself. However, note that the high values in each component fade away with time due to the expansion at the periodic orbit, which pushes the points farther and farther away. 

\begin{figure}
\includegraphics[scale=0.4]{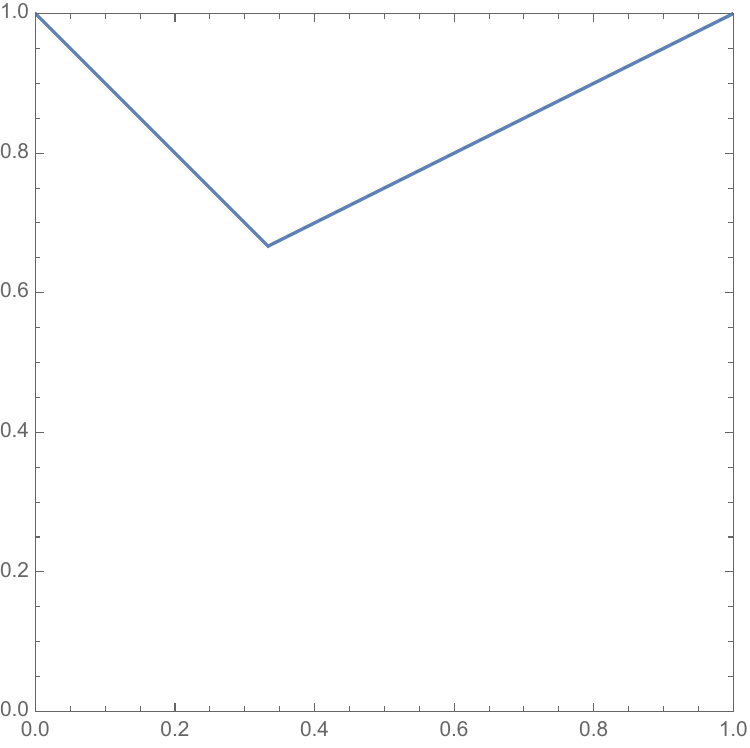}\hspace{2cm}\includegraphics[scale=0.4]{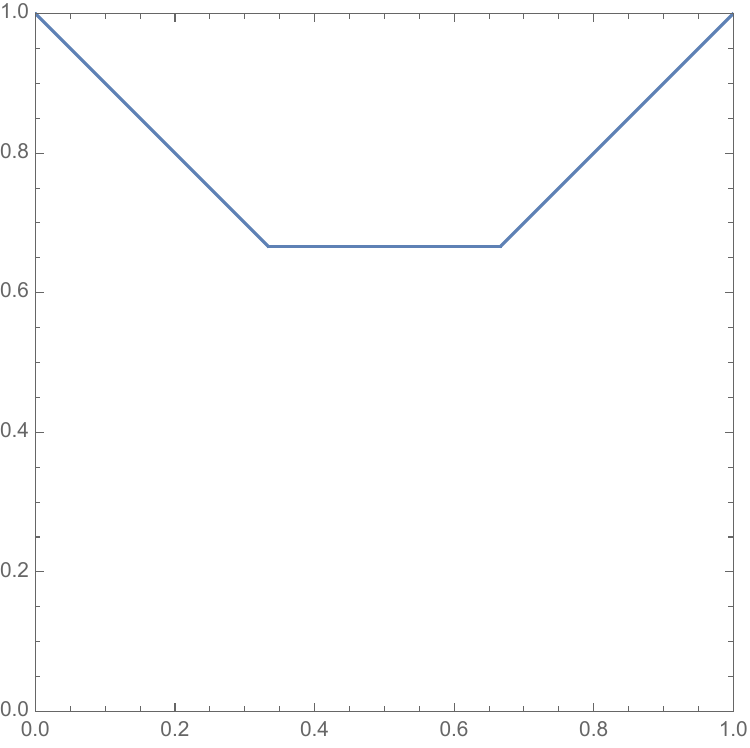}
\caption{Graphs of the Pickands dependence functions of the examples in Section~\ref{subsec:non-periodic-case-no-overlap} on the left, and Section~\ref{ssec:dlp_per}, on the right.}
\label{fig:examples22}
\end{figure}

\subsubsection{Another periodic case}

In order to give an example where $\theta_1\neq \theta_2$, we assume now that $\cZ_1=\{\zeta\}$, $\cZ_2=\{f(\zeta)\}$ with $f^2(\zeta)=f(\zeta)$ (and $f(\zeta)\neq \zeta$).  Here $\theta_\zeta(\alpha)$ is as in Section~\ref{subsec:non-periodic-case-no-overlap}, but we also need a $\theta_{f(\zeta)}(\alpha)$ to derive $\theta(\alpha, 1-\alpha) = \alpha\theta_\zeta(\alpha)+(1-\alpha)\theta_{f(\zeta)}$.  For an explicit formula, in order to have $f$ being locally diffeomorphic around each of $\zeta$ and $f(\zeta)$ we will use the tripling map (so eg $\zeta=1/6$, $f(\zeta)=1/2$).  We will not give the full details, but we see that $\theta_\zeta(\alpha)= \max\left\{0, 1-\frac13\frac{1-\alpha}\alpha\right\}$ and $\theta_{f(\zeta)}(\alpha)\equiv2/3$, so 
\[\theta(\alpha, 1-\alpha)= \begin{cases} \frac23(1-\alpha) & \text{ if } \alpha \le \frac14,\\
\frac13+\frac23\alpha & \text{ if } \alpha > \frac14,
\end{cases}
\qquad
G(\alpha, 1-\alpha)= \begin{cases} e^{-\frac23\tau_2} & \text{ if } \tau_1 \le \frac{\tau_2}3,\\
e^{-\left(\tau_1+\frac13\tau_2\right)} & \text{ if } \tau_1 > \frac{\tau_2}3.
\end{cases}
\]
Since $\theta_1=1$ and $\theta_2=2/3$, we have $\Gamma(\tau_1, \tau_2) = \theta\left(\tau_1, \frac32\tau_2\right)\hat\Gamma\left(\tau_1, \frac32\tau_2\right)$, i.e., 
\[\Gamma(\tau_1, \tau_2)= \begin{cases} \tau_2 & \text{ if } \tau_2\ge 2\tau_1,\\
\tau_1+\frac12\tau_2 & \text{ if } \tau_2< 2\tau_1,
\end{cases} 
\qquad D(\alpha) =  \begin{cases} 1-\alpha & \text{ if } \alpha\le 1/3,\\
\frac{1+\alpha}2 & \text{ if } \alpha> 1/3.\end{cases}
\]

\subsection{Overlapping points}

Here we look at three points $\{\zeta_1, \zeta_2, \zeta_3\}$, then consider  $\cZ_1=\{\zeta_1, \zeta_3\}$ and $\cZ_2=\{\zeta_2, \zeta_3\}$.

\subsubsection{Spatial dependence}\label{subsec:spatial-overlapping}
We first compute $\hat\Gamma$.
Let, for $i=1,2$, 
\[
\psi_i(x) = g_i(d(x, \cZ_i)), \: x \in \mathcal X,
\]
where $g_i$ is as in \eqref{eq:types}.
For $\ttau = (\tau_1, \tau_2) \in (0,\infty)^2$, choose $u_{n i}(\tau_i) > 0$ such that
\[
\mu(U_i^{(n)}(\tau_i)) \sim \frac{\tau_i}{n},
\]
where as before $U_i^{(n)}(\tau_i) = \left\{ \psi_i > u_{n i}(\tau_i) \right\}$.

We assume that for all $\ttau$ and all $n$ large enough, $U_i^{(n)}(\tau_i)$ can be written as a disjoint union
\[
U_i^{(n)}(\tau_i) = V_i^{(n)}(\tau_i) \cup \hat V_i^{(n)}(\tau_i),
\]
where $V_i^{(n)}(\tau_i)$ (resp. $\hat V_i^{(n)}(\tau_i)$) is a neighbourhood of $\zeta_i$ (resp. $\zeta_3$), and that
\[
 \mu\left(V_i^{(n)}(\tau_i)\right) \sim p_i \mu\left(U_i^{(n)}(\tau_i)\right),
\]
for some $p_i \in (0,1)$, so that 
\[
 \mu\left(V_i^{(n)}(\tau_i)\right) \sim \frac{p_i \tau_i}{n} \text{ and }  \mu\left(\hat V_i^{(n)}(\tau_i)\right) \sim \frac{(1-p_i)\tau_i}{n}.
\]

We define $\hat V^{(n)}(\ttau) = \cap_{i=1}^2 \hat V_i^{(n)}(\tau_i)$ and we assume that for some $q_1(\ttau) \in (0,1)$,
\[
 \mu\left(\hat V^{(n)}(\ttau)\right) \sim q_1(\ttau) \mu\left(\hat V_1^{(n)}(\tau_1)\right).
\]
This implies that $\mu\left(\hat V^{(n)}(\ttau)\right) \sim q_2(\ttau) \mu\left(\hat V_2^{(n)}(\tau_2)\right)$ where $q_2(\ttau) = q_1(\ttau) \frac{(1-p_1)\tau_1}{(1-p_2)\tau_2}$ and 
\[
\mu\left(\hat V^{(n)}(\ttau)\right) \sim \frac{q_1(\ttau)(1-p_1) \tau_1}{n} = \frac{q_2(\ttau) (1-p_2) \tau_2}{n}.
\]

It follows that 
\begin{align*}
\hat\Gamma(\ttau)&=\lim_{n\to\infty}n\mu(\XX_0\not\leq \uu_n(\ttau))=\lim_{n\to\infty}n\mu\left(\bigcup_{i=1}^2\{X_{0i}>u_{ni}(\tau_i)\}\right)
=\lim_{n\to\infty}n\sum_{i=1}^2\mu\left(X_{0i}>u_{ni}(\tau_i)\right)\\
&\quad-\lim_{n\to\infty}n\mu\left(\bigcap_{i=1}^2\{X_{0i}>u_{ni}(\tau_i)\}\right)=\tau_1 + \tau_2 - q_1(\ttau)(1-p_1)\tau_1 = \tau_1 + \tau_2 - q_2(\ttau)(1-p_2)\tau_2.
\end{align*}

 As an example, suppose that $\mu(B_r(\zeta_k)) \sim c_k r^d$ for $k=1, 2, 3$ and that the functions $g_i$ are given by $g_i(t) = - \log t$. Then we can choose
\[
u_{n i}(\tau_i) = - \frac{1}{d} \left( \log \tau_i + \log \frac{p_i}{c_i} - \log n\right),
\]
with $p_i = \frac{c_i}{c_i+ c_3}$.  Independently of the types of $g_i$ we obtain
\begin{equation}
\label{eq:Gamma-hat-3.3}
 \hat\Gamma(\ttau) = \tau_1 + \tau_2 - c_3 \min_{i=1,2} \frac{\tau_i}{c_i + c_3}.
\end{equation}
When $\mu$ is the Lebesgue measure on $\mathcal X = [0,1]$ and all the $\zeta_i$ belong to $(0,1)$, we get 
\[
\hat\Gamma(\ttau) = \tau_1 + \tau_2 - \frac 1 2 \min_{i=1, 2} \tau_i.
\]

\subsubsection{Non-periodic case}
\label{ssec:overlap_nonper}

Suppose that $\zeta_3=f(\zeta_1)=f(\zeta_2)$ and that $\left\{\cup_{n\ge 1}f^n(\zeta_3)\right\}\cap \{\zeta_1, \zeta_2, \zeta_3\}=\es$.  The computations below show that it suffices to consider $q=1$.

We require some notation for this.  First let $U_1^{(n)}(\tau_1) = V_1^{(n)}(\tau_1)\cup \hat V_1^{(n)}(\tau_1)$ where $V_1^{(n)}(\tau_1)$ is the corresponding neighbourhood of $\zeta_1$ and $\hat V_1^{(n)}(\tau_1)$ is that of $\zeta_3$.   Similarly write $U_2^{(n)}(\tau_2) = V_2^{(n)}(\tau_2)\cup \hat V_2^{(n)}(\tau_2)$.

Now we define, for $\alpha\in (0, 1)$,
$$p_1(\alpha):= \lim_{n\to \infty} \frac{\mu\left(V_1^{(n)}(\alpha)\right)}{\mu\left(U_1^{(n)}(\alpha)\cup U_2^{(n)}(1-\alpha)\right)}, \quad p_2(\alpha):= \lim_{n\to \infty} \frac{\mu\left(V_2^{(n)}(1-\alpha)\right)}{\mu\left(U_1^{(n)}(\alpha)\cup U_2^{(n)}(1-\alpha)\right)},$$
$$p_3(\alpha):= \lim_{n\to \infty} \frac{\mu\left(\hat V_1^{(n)}(\alpha)\cup \hat V_2^{(n)}(1-\alpha) \right)}{\mu\left(U_1^{(n)}(\alpha)\cup U_2^{(n)}(1-\alpha)\right)}.$$
Note that $p_3(\alpha)=1-p_1(\alpha)-p_2(\alpha)$.
Moreover, set
$$\theta_{1}(\alpha):= \lim_{n\to \infty} \frac{\mu\left(V_1^{(n)}(\alpha)\sm f^{-1}\left(\hat V_1^{(n)}(\alpha)\cup \hat V_2^{(n)}(1-\alpha)\right)\right)}{\mu\left(V_1^{(n)}(\alpha)\right)},$$
$$\theta_{2}(\alpha):= \lim_{n\to \infty} \frac{\mu\left(V_2^{(n)}(1-\alpha)\sm f^{-1}\left(\hat V_1^{(n)}(\alpha)\cup \hat V_2^{(n)}(1-\alpha)\right)\right)}{\mu\left(V_2^{(n)}(1-\alpha)\right)}.$$

Then $\XX_0\nleq \uu_n(\tau_1, \tau_2)$ implies, for all large $n$ and $\alpha$ as above, 
\begin{enumerate}
\item with asymptotic probability $p_1(\alpha)$ we have $x\in V_1^{(n)}(\tau_1)$, in which case $X_{12}<u_{n2}(\tau_2)$ and $X_{11}<u_{n1}(\tau_1)$ both occur simultaneously with asymptotic probability $\theta_{1}(\alpha)$;
\item with asymptotic probability $p_2(\alpha)$ we have $x\in V_2^{(n)}(\tau_2)$, in which case $X_{11}<u_{n1}(\tau_1)$ and $X_{12}<u_{n2}(\tau_2)$ both occur simultaneously with asymptotic probability $\theta_{2}(\alpha)$;
\item with asymptotic probability $p_3(\alpha)$ we have $x\in \hat V_1^{(n)}(\tau_1) \cup \hat V_2^{(n)}(\tau_2)$, in which case $X_{11}<u_{n1}(\tau_1)$  and $X_{12}<u_{n2}(\tau_2)$ with probability 1.
\end{enumerate}
Summing these possibilities we obtain
\begin{align*}
\theta(\alpha, 1-\alpha)&= p_1(\alpha)\theta_{1}(\alpha)+ p_2(\alpha)\theta_{2}(\alpha)+ 1-p_1(\alpha)-p_2(\alpha)\\
& =1-p_1(\alpha)\left(1-\theta_{1}(\alpha)\right)- p_2\left(\alpha)(1-\theta_{2}(\alpha)\right).
\end{align*}

To obtain a concrete formula, again assume that we have an acip with density $\rho$, that $f$ is conformal and the observables are all of the same form.  
Define 

$$r_1:= \lim_{n\to \infty}\frac{\mu\left(\hat V_1^{(n)}(1)\right)}{\mu\left(V_1^{(n)}(1)\right)}, \ r_2:= \lim_{n\to \infty}\frac{\mu\left(\hat V_2^{(n)}(1)\right)}{\mu\left(V_2^{(n)}(1)\right)}$$
(note that by regularity we could choose any positive constant in place of 1 here)
and 
$$R(\alpha) = \lim_{n\to \infty} \frac{\mu\left(V_1^{(n)}(\alpha)\right)}{\mu\left(V_2^{(n)}(1-\alpha)\right)}.$$
Then
$$\theta_{1}(\alpha)= \max\left\{0, 1-\frac{1}{|Df(\zeta_1)|}\max\left\{\frac{\rho(\zeta_1)}{\rho(f(\zeta_1))}r_1, \frac{\rho(\zeta_1)}{\rho(f(\zeta_1))}\frac{r_2}{R(\alpha)}\right\}\right\}$$
and
$$\theta_{2}(\alpha)= \max\left\{0, 1-\frac{1}{|Df(\zeta_2)|}\max\left\{\frac{\rho(\zeta_2)}{\rho(f(\zeta_1))}r_2, \frac{\rho(\zeta_2)}{\rho(f(\zeta_2))}{r_1}{R(\alpha)}\right\}\right\}.$$
To get a more concrete idea of what is happening here, we consider the case of the doubling map with Lebesgue (in which case the measures of $V_i^{(n)}(\lambda)$ and $\hat V_i^{(n)}(\lambda)$ are the same for all $\lambda>0$, so $r_1=r_2=1$)   where we get
$$\theta_{1}(\alpha)= \max\left\{0, 1-\frac{1}{2}\max\left\{1, \frac{1-\alpha}{\alpha}\right\}\right\}, \ \theta_{2}(\alpha)= \max\left\{0, 1-\frac{1}{2}\max\left\{1, \frac{\alpha}{1-\alpha}\right\}\right\}.$$

In this case,
 $p_1(\alpha) = \frac{\alpha}{1+\max\{\alpha, 1-\alpha\}}$ and $p_2(\alpha) = \frac{1-\alpha}{1+\max\{\alpha, 1-\alpha\}}$, so \begin{align*}
\theta(\alpha, 1-\alpha) & = 1-\frac\alpha{1+\max\{\alpha, 1-\alpha\}}\left(1- \max\left\{0, 1-\frac{1}{2}\max\left\{1, \frac{1-\alpha}{\alpha}\right\}\right\}\right)\\
&\hspace{2cm} -\frac{1-\alpha}{1+\max\{\alpha, 1-\alpha\}}\left(1-\max\left\{0, 1-\frac{1}{2}\max\left\{1, \frac{\alpha}{1-\alpha}\right\}\right\}\right).
\end{align*}
Hence we can compute
\[  \theta(\alpha, 1-\alpha)= \begin{cases} \frac{3-3\alpha}{4-2\alpha} & \text{ if } \alpha \in [0,1/3],\\
\frac1{2-\alpha} & \text{ if } \alpha \in (1/3, 1/2],\\
\frac1{1+\alpha} & \text{ if } \alpha \in (1/2 ,2/3],\\
\frac{3\alpha}{2+2\alpha} & \text{ if } \alpha \in (2/3, 1],
\end{cases}
\qquad 
D(\alpha)= \begin{cases} 
1-\alpha & \text{ if } \alpha \in [0,1/3],\\
\frac23 & \text{ if } \alpha \in (1/3, 2/3],\\
\alpha & \text{ if } \alpha \in (2/3, 1].
\end{cases} 
\]
Where we used $G(\ttau) = e^{-\theta(\ttau)\left(\tau_1+\tau_2-\frac12\min\{\tau_1, \tau_2\}\right)}$ 
 with $\hat\Gamma$ from Section~\ref{subsec:spatial-overlapping}. 
In this case $\theta_1=\theta_2= 3/4$, so $\Gamma(\ttau) = \frac43 \theta(\ttau)\hat\Gamma(\ttau)=  \frac43 \theta(\ttau)\left(\tau_1+\tau_2-\frac12\min\{\tau_1, \tau_2\}\right)$.  Note that, in this case, the Pickands dependence function coincides with that of Section~\ref{ssec:dlp_per}, see Figure~\ref{fig:examples22}.

Observe that in this case we have both spatial and temporal connections between the maximal sets $\mathcal Z_1$ and  $\mathcal Z_2$. Namely, their intersection $\mathcal Z_1\cap\mathcal Z_2\neq \emptyset$ is not empty and, moreover, $\mathcal Z_1\cap f^{-1}\left(\mathcal Z_2\right)\neq \emptyset\neq f^{-1}\left(\mathcal Z_1\right)\cap\mathcal Z_2$. Both these connections have a contribution for stable and Pickands dependence functions. The clustering profile in this case gives us that an extreme observation in one of the components is likely to be followed by extreme observations on both components at the same time, which means that there exists both intra-recurrence and inter-recurrence regarding the extremal behaviour of the components. Note that in dealing with the doubling map the two preimages of $\zeta_3$ are precisely $\zeta_1$ and $\zeta_2$, which means that  simultaneous high observations in both components (corresponding to an entrance in a neighbourhood of $\zeta_3$) must have been preceded by an high observation in only one of the components (an entrance either in a neighbourhood of $\zeta_1$ or $\zeta_2$).

\subsubsection{A trivariate version}
\label{sssec:triv}

Assume the same dynamical setup as that above, but with three observables, at $\zeta_1$, $\zeta_2$ and $f(\zeta_1)=f(\zeta_2)$ respectively.  Now for $\alpha\in [0, 1]$, $\beta\in [0, 1-\alpha]$, define
$$\theta_1(\alpha, \beta):= \lim_{n\to \infty}\frac{\mu\left(U_1^{(n)}(\alpha)\sm f^{-1}U_3(1-\alpha-\beta)\right)}{\mu\left(U_1^{(n)}(\alpha)\right)},$$
$$\theta_2(\alpha, \beta):= \lim_{n\to \infty}\frac{\mu\left(U_2^{(n)}(\alpha)\sm f^{-1}U_3(1-\alpha-\beta)\right)}{\mu\left(U_2^{(n)}(\alpha)\right)},$$
and 
$$p_1(\alpha, \beta):= \lim_{n\to \infty}\frac{\mu\left(U_1^{(n)}(\alpha)\right)}{\mu\left(U_1^{(n)}(\alpha)\cup U_2^{(n)}(\beta)\cup U_3^{(n)}(1-\alpha-\beta)\right)}$$
$$p_2(\alpha, \beta):= \lim_{n\to \infty}\frac{\mu\left(U_2^{(n)}(\beta)\right)}{\mu\left(U_1^{(n)}(\alpha)\cup U_2^{(n)}(\beta)\cup U_3^{(n)}(1-\alpha-\beta)\right)}$$
$$p_3(\alpha, \beta):= \lim_{n\to \infty}\frac{\mu\left(U_3^{(n)}(1-\alpha-\beta)\right)}{\mu\left(U_1^{(n)}(\alpha)\cup U_2^{(n)}(\beta)\cup U_3^{(n)}(1-\alpha-\beta)\right)}.$$

Then $\XX_0\nleq \uu_n(\tau_1, \tau_2,\tau_3)$ implies, for all large $n$ and $\alpha=\frac{\tau_1}{\tau_1+\tau_2+\tau_3}$, $\beta=\frac{\tau_2}{\tau_1+\tau_2+\tau_3}$, 
\begin{enumerate}
\item with asymptotic probability $p_1(\alpha, \beta)$ we have $x\in U_1^{(n)}(\tau_1)$, in which case $X_{13}<u_{n3}(\tau_3)$ with asymptotic probability $\theta_{1}(\alpha, \beta)$ ($X_{11}<u_{n1}(\tau_1)$ and $X_{12}<u_{n2}(\tau_2)$ with probability 1);
\item with asymptotic probability $p_2(\alpha, \beta)$ we have $x\in U_2^{(n)}(\tau_1)$, in which case $X_{13}<u_{n3}(\tau_3)$ with asymptotic probability $\theta_{2}(\alpha, \beta)$ ($X_{11}<u_{n1}(\tau_1)$ and $X_{12}<u_{n2}(\tau_2)$ with probability 1);
\item with asymptotic probability $p_3(\alpha, \beta)$ we have $x\in U_3^{(n)}(\tau_3) $, in which case $X_{11}<u_{n1}(\tau_1)$, $X_{12}<u_{n2}(\tau_2)$ and $X_{13}<u_{n3}(\tau_3)$ with probability 1.
\end{enumerate}

So it is sufficient to consider $q=1$ and write
$$\theta(\alpha, \beta)= 1-p_1(\alpha, \beta)\left(1-\theta_{1}(\alpha, \beta)\right)- p_2\left(\alpha, 1-\beta)(1-\theta_{2}(\alpha, \beta)\right).$$

In the acip case where the observables all take the same form,
$$\theta_{1}(\alpha, \beta) = \max\left\{0, 1-\frac1{|Df(\zeta_1)|}\frac{1-\alpha-\beta}{\alpha}\frac{\rho(\zeta_1)}{\rho(f(\zeta_1)}\right\},$$
$$\theta_{2}(\alpha, \beta) =  \max\left\{0, 1-\frac1{|Df(\zeta_2)|}\frac{1-\alpha-\beta}{\beta}\frac{\rho(\zeta_2)}{\rho(f(\zeta_2)}\right\},$$
hence
\begin{align*}\theta(\tau_1, \tau_2, \tau_3)&= 1-\alpha\left(1-  \max\left\{0, 1-\frac1{|Df(\zeta_1)|}\frac{1-\alpha-\beta}{\alpha}\frac{\rho(\zeta_1)}{\rho(f(\zeta_1)}\right\} \right) \\
& \qquad- \beta\left(1- \max\left\{0, 1-\frac1{|Df(\zeta_2)|}\frac{1-\alpha-\beta}{\beta}\frac{\rho(\zeta_2)}{\rho(f(\zeta_2)}\right\}\right).
\end{align*}

So again in the doubling map case with Lebesgue where the observables are all of the same form,
$$\theta(\tau_1, \tau_2, \tau_3)= 1-\alpha\left(1-\max\left\{0, 1-\frac{1-\alpha-\beta}{2\alpha}\right\}\right) -\beta\left(1-\max\left\{0, 1-\frac{1-\alpha-\beta}{2\beta}\right\}\right).$$
In this case $\theta_1=\theta_2=\theta_3=1$, $\Gamma(\ttau) = \theta(\ttau)\hat\Gamma(\ttau)$ and consequently $D(\alpha_1,\alpha_2)$ shares the same expression as $\theta$, with $\alpha_1=\alpha$ and $\alpha_2=\beta$ (it is easy to show that $\hat\Gamma(\ttau)=\tau_1+\tau_2+\tau_2$), see Figure~\ref{fig:examples23}.

\subsubsection{A periodic case}
\label{ssec:overlap_per}

Suppose that $\zeta_3=f(\zeta_1)=f(\zeta_2)$ and that $f^2(\zeta_1)= f(\zeta_1)$.  We use the notation for the neighbourhoods of sets, the $p_1, p_2, p_3$ and $\theta_{1}, \theta_{2}$ from Section~\ref{ssec:overlap_nonper}.  However we now also need
$$\theta_{3}(\alpha):= \lim_{n\to \infty} \frac{\mu\left(\left(\hat V_1^{(n)}(\alpha)\cup \hat V_2^{(n)}(1-\alpha)\right)\sm f^{-1}\left(\hat V_1^{(n)}(\alpha)\cup \hat V_2^{(n)}(1-\alpha)\right)\right)}{\mu\left( \hat V_1^{(n)}(\tau_1) \cup \hat V_2^{(n)}(\tau_2)\right)}.$$

Then $\XX_0\nleq \uu_n(\tau_1, \tau_2)$ implies, for all large $n$ and $\alpha$ as above, 
\begin{enumerate}
\item with asymptotic probability $p_1(\alpha)$ we have $x\in V_1^{(n)}(\tau_1)$, in which case $X_{12}<u_{n2}(\tau_2)$ with asymptotic probability $\theta_{1}(\alpha)$ ($X_{11}<u_{n1}(\tau_1)$ with probability 1);
\item with asymptotic probability $p_2(\alpha)$ we have $x\in V_2^{(n)}(\tau_2)$, in which case $X_{11}<u_{n1}(\tau_1)$ with asymptotic probability $\theta_{2}(\alpha)$ ($X_{12}<u_{n2}(\tau_2)$ with probability 1);
\item with asymptotic probability $p_3(\alpha)$ we have $x\in \hat V_1^{(n)}(\tau_1) \cup \hat V_2^{(n)}(\tau_2)$, in which case $X_{11}<u_{n1}(\tau_1)$  and $X_{12}<u_{n2}(\tau_2)$ with probability $\theta_3(\alpha)$.
\end{enumerate}
Summing these possibilities we note that we may consider $q=1$ and obtain
\begin{align*}
\theta(\alpha, 1-\alpha)&= p_1(\alpha)\theta_{1}(\alpha)+ p_2(\alpha)\theta_{2}(\alpha)+ (1-p_1(\alpha)-p_2(\alpha))\theta_{3}(\alpha).
\end{align*}

For our concrete case, note that since $\zeta_3$ has three preimages (including itself), we need a map of degree three or more, so suppose $f$ is the tripling map with Lebesgue as invariant measure.  The $p_1(\alpha)$ and $p_2(\alpha)$ are obtained analogously to in  Section~\ref{ssec:overlap_nonper}, and note that in contrast to there, we also need to add $p_3(\alpha)\theta_3(\alpha) = \frac{\max\{\alpha, 1-\alpha\}}{(1+\max\{\alpha, 1-\alpha\})}\frac23$ instead of just $p_3(\alpha)\cdot 1$.
We compute
\[  \theta(\alpha, 1-\alpha)= \begin{cases} \frac43\left(\frac{1-\alpha}{2-\alpha}\right) & \text{ if } \alpha \in [0,1/4],\\
\frac{1}{2-\alpha} & \text{ if } \alpha \in (1/4, 1/2],\\
\frac{1}{1+\alpha} & \text{ if } \alpha \in (1/2 ,3/4],\\
\frac43\left(\frac{\alpha}{1+\alpha} \right)& \text{ if } \alpha \in (3/4, 1],
\end{cases}
 \qquad 
D(\alpha)= \begin{cases} 1-\alpha & \text{ if } \alpha \in [0,1/4],\\
\frac34 & \text{ if } \alpha \in (1/4, 3/4],\\
\alpha & \text{ if } \alpha \in (3/4, 1].
\end{cases} 
\]
Here $\theta_1=\theta_2=2/3$ so $\Gamma(\ttau)= \frac32\theta(\ttau)\hat\Gamma(\ttau)=\frac32\theta(\ttau)\left(\tau_1+\tau_2-\frac12\min\{\tau_1, \tau_2\}\right)$, see Figure~\ref{fig:examples23}.

Observe that in this case, as in Section~\ref{ssec:overlap_nonper}, we have the presence of both spatial and temporal connections between the maximal sets $\mathcal Z_1$ and $\mathcal Z_2$. The Pickands dependence function is graphically similar to the one obtained in Section~\ref{ssec:overlap_nonper}, but one can easily notice that it is closer to the constant function equal to $1$, which serves as upper bound and should be interpreted as independence between the extremal behaviour of the components. The reason for this is tied to the use of the tripling map instead of the doubling map used in the former case. This means that the repelling effect is stronger ($3>2$), which implies that the link created by the clustering fades away faster, making the interdependence weaker and therefore the graph is closer to the upper bound representing independence rather than the lower bound corresponding to perfect association. 

\begin{figure}
\includegraphics[scale=0.35]{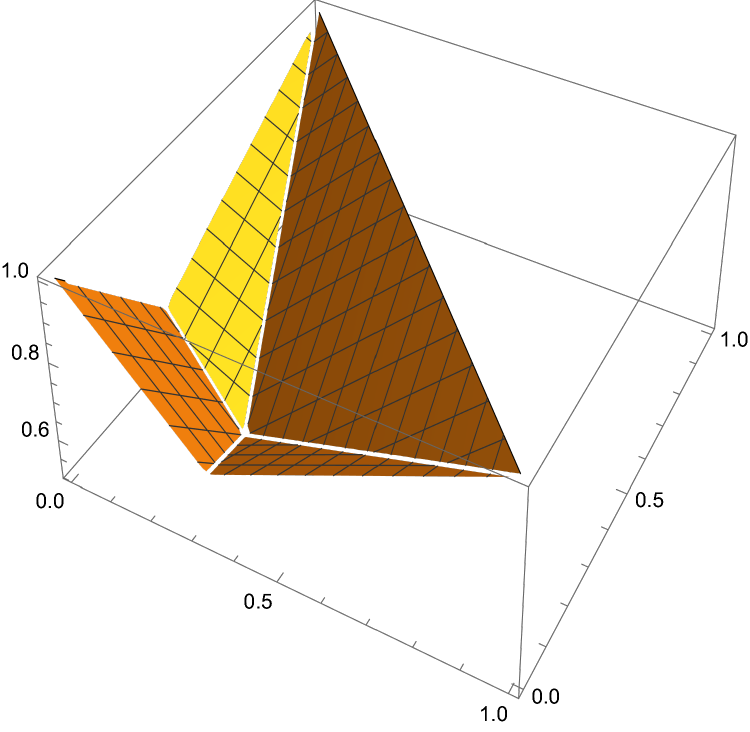}\hspace{0.5cm}\includegraphics[scale=0.35]{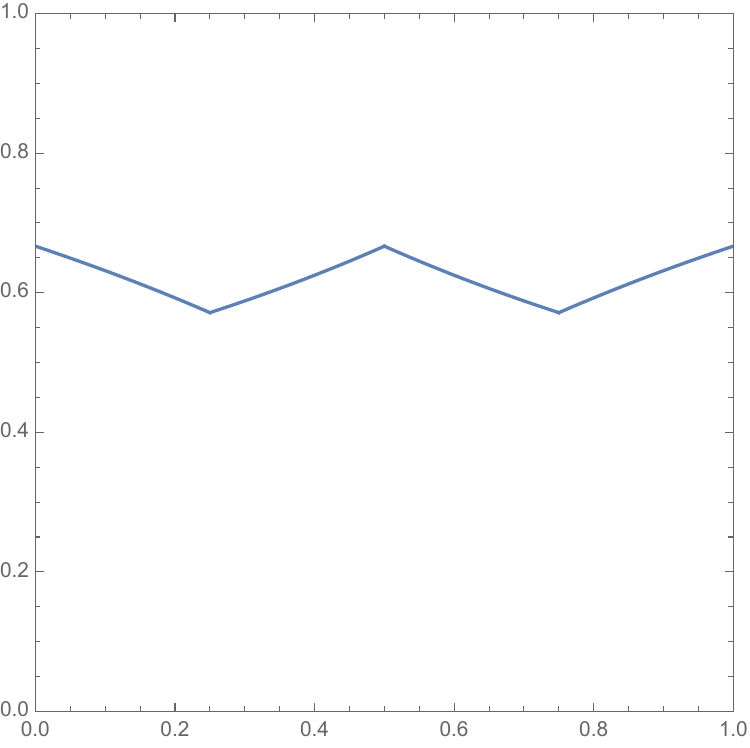}\hspace{0.5cm}\includegraphics[scale=0.35]{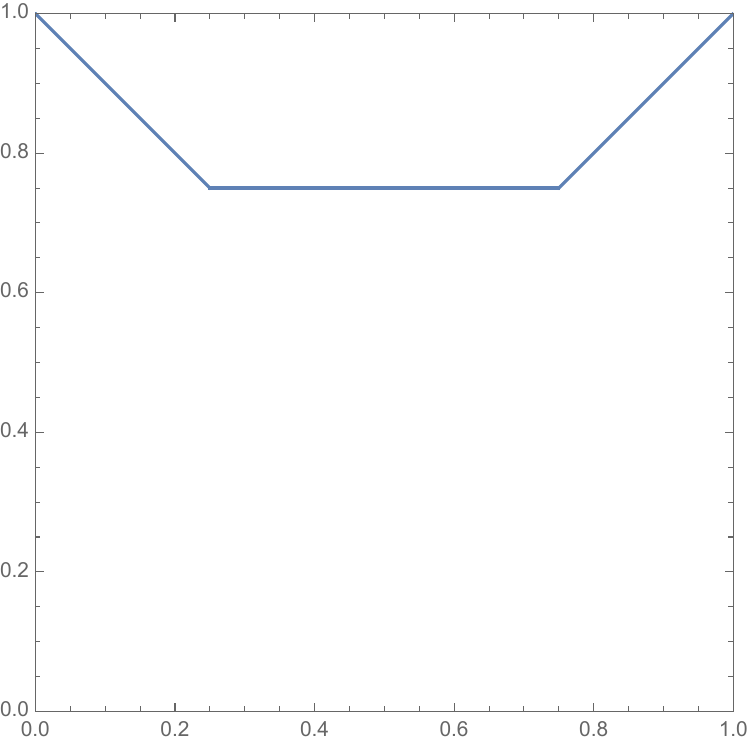}
\caption{Graph of the Pickands dependence function of the example in Section~\ref{sssec:triv} on the left; Graphs of the extremal index function and of the Pickands dependence function of the example in Section~\ref{ssec:overlap_per}, in the middle and on the right, respectively.}
\label{fig:examples23}
\end{figure}

\subsection{A 2d example}

All our previous examples involved one-dimensional uniformly expanding maps, with extremal sets consisting of finitely many points. In what follows, we present a simple example showing that it is also possible to consider higher-dimensional hyperbolic maps with more general extremal sets. More importantly, this example highlights that the Pickands dependence function need not be symmetric, unlike in the previous cases.

Here we consider the toral automorphism $f: \mathbb{T}^2\to \mathbb{T}^2$ induced by $\left(\begin{smallmatrix} 2&1\\ 1&1 \end{smallmatrix}\right)$, i.e. Arnold's cat map.  This has a fixed point at 0 and eigenvalues $\lambda, 1/\lambda$ where $\lambda= \frac{3+\sqrt 5}2$.  Let $\mu$ be Lebesgue measure.  We take two small pieces of local unstable manifold $\cZ_1=(-\eps,\eps)\times\{0\}$, $\cZ_2=(\lambda\eps,\lambda^2\eps)\times\{0\}$, written in the local coordinate axes $E^u,E^s$, with origin at $x_0$, which correspond to its respective local unstable and stable manifolds.  Let $\psi_1(x) = g_1(d(x, \cZ_1))$ and $\psi_2(x) = g_2(d(x, \cZ_2))$ for $g_i$ as in \eqref{eq:types}. 

Even though this particular example is not covered by Proposition \ref{prop:applications}, we observe that in this invertible case, conditions $\D(\uu_n)$ and $\D'(\uu_n)$ can be checked with a simple adaptation of the argument used in \cite{CFFH15} for the respective univariate versions. Hence, we have:

\begin{proposition}
    \label{prop:cat_map}
    For the toral automorphism $f : \mathbb{T}^2 \to \mathbb{T}^2$, the extremal sets $\cZ_i$ and the observables $\psi_i$ introduced above, the assumptions of Theorem \ref{thm:distributional-convergence} hold and in particular we have 
$$
\lim_{n\to\infty}\mu(\MM_n\leq \uu_n(\ttau))=\e^{-\theta(\ttau)\hat\Gamma(\ttau)},
$$
where $\hat\Gamma(\ttau)$ and $\theta(\ttau)$ are given by \eqref{eq:unvector} and \eqref{def:thetan} respectively.
\end{proposition}

For $n$ large enough, $U_1^{(n)}(\tau_1)$ and $U_2^{(n)}(\tau_2)$ are two disjoint rectangles\footnote{\label{footnote1}In fact, for the usual Euclidean metric in the definition of $\PPsi$ these sets have rounded tips. 
However, since these semidisks have an asymptotically negligible measure of the order $\left(\mu\left(U_i^{(n)}\right)\right)^2$, then to simplify both the computations and the diagrams, we will simply disregard these half discs as if the metric would measure distances only in the stable direction. Therefore, we will assume that the sets $U_i^{(n)}$ are actual rectangles, which asymptotically makes no difference.} of widths $2\eps$ and $\lambda(\lambda-1)\eps$, with heights $h_1\sim\frac{\tau_1}{2\eps n}$ and $h_2\sim\frac{\tau_2}{(\lambda^2-\lambda)\eps n}$, respectively.

Note that $V_n(\ttau) = \cap_{i=1}^2 U_i^{(n)}(\tau_i)=\emptyset$ 
 and therefore:
\begin{align*}
\hat\Gamma(\ttau)&=\lim_{n\to\infty}n\mu(\XX_0\not\leq \uu_n(\ttau))=\lim_{n\to\infty}n\mu\left(\bigcup_{i=1}^2\{X_{0i}>u_{ni}(\tau_i)\}\right)\\
&=\lim_{n\to\infty}n\sum_{i=1}^2\mu\left(X_{0i}>u_{ni}(\tau_i)\right)
-\lim_{n\to\infty}n\mu\left(\bigcap_{i=1}^2\{X_{0i}>u_{ni}(\tau_i)\}\right)=\tau_1+\tau_2.
\end{align*}

In order to compute the set $\A$ (with the right choice of $q$), we start by noting that the dynamics pushes the set $U_2^{(n)}(\tau_2)$ along the unstable manifold away from both $U_1^{(n)}(\tau_1)$ and $U_2^{(n)}(\tau_2)$, which means that if the orbit enters $U_2^{(n)}(\tau_2)$, then no short returns to $\{\XX_0\leq \uu_n(\ttau)\}$ are expected. The set  $U_1^{(n)}(\tau_1)$ is shrunk in the stable (vertical) direction and stretched in the unstable (horizontal) and overlaps with $U_1^{(n)}(\tau_1)$ immediately in the first iteration. This means that the middle vertical strip of width $2\lambda^{-1}$ (depicted in blue in Figures \ref{fig:2dcase1} and \ref{fig:2dcase2}) must be removed from $U_1^{(n)}(\tau_1)$. Observe that the right vertical strip (depicted in red in Figures \ref{fig:2dcase1} and \ref{fig:2dcase2}) falls exactly into the empty space between  $U_1^{(n)}(\tau_1)$ and  $U_2^{(n)}(\tau_2)$, however after two iterates it stretches completely horizontally across $U_2^{(n)}(\tau_2)$. Now, one of two possible scenarios can occur, either $h_1<\lambda^2h_2$ and then the red strip is completely contained in $U_2^{(n)}(\tau_2)$ and must be removed from $\A$ (Figure \ref{fig:2dcase1}); or not and then only the central part of the red strip that meets $U_2^{(n)}(\tau_2)$ must be removed (Figure \ref{fig:2dcase2}). Note that the dynamics will push the  possibly surviving points further and further way from the sets $U_1^{(n)}(\tau_1)$ and  $U_2^{(n)}(\tau_2)$. This means that we should take $q=2$ in this case.
\begin{figure}
\includegraphics[width=\textwidth]{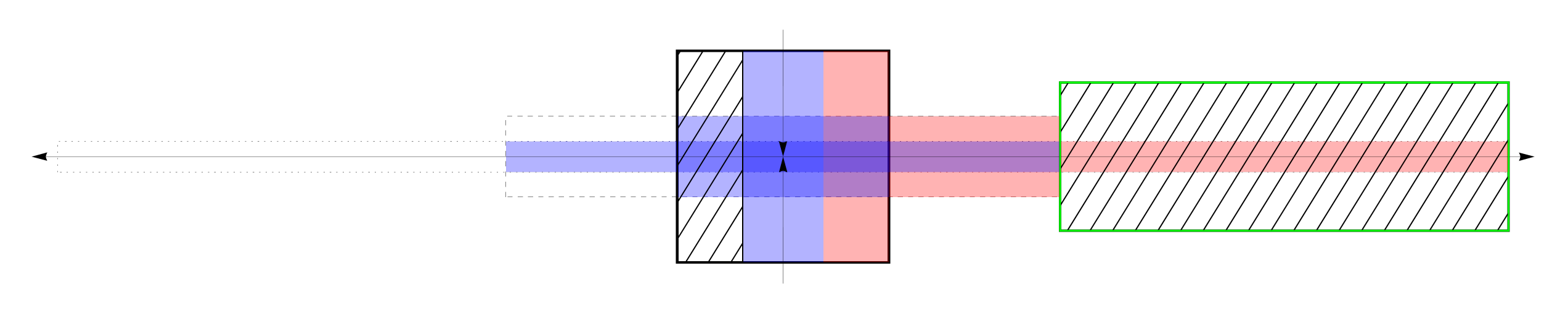}
\caption{$U_1^{(n)}(\tau_1)$ is the box outlined with a black line, $U_2^{(n)}(\tau_2)$ is the box outlined in green, the first iterate of  $U_1^{(n)}(\tau_1)$ corresponds to the black, dashed box, while the second iterate corresponds to the black, dotted box. $\A$ corresponds to the regions covered by slanted lines.
In this case, $h_1<\lambda^2h_2$. Note that for large $n$ the sets $U_i^{(n)}$ are very thin rectangles with rounded tips,  
but for pictorial simplicity we disregard the semidisks as mentioned in footnote \ref{footnote1}.}
\label{fig:2dcase1}
\end{figure}

\begin{figure}
\includegraphics[width=\textwidth]{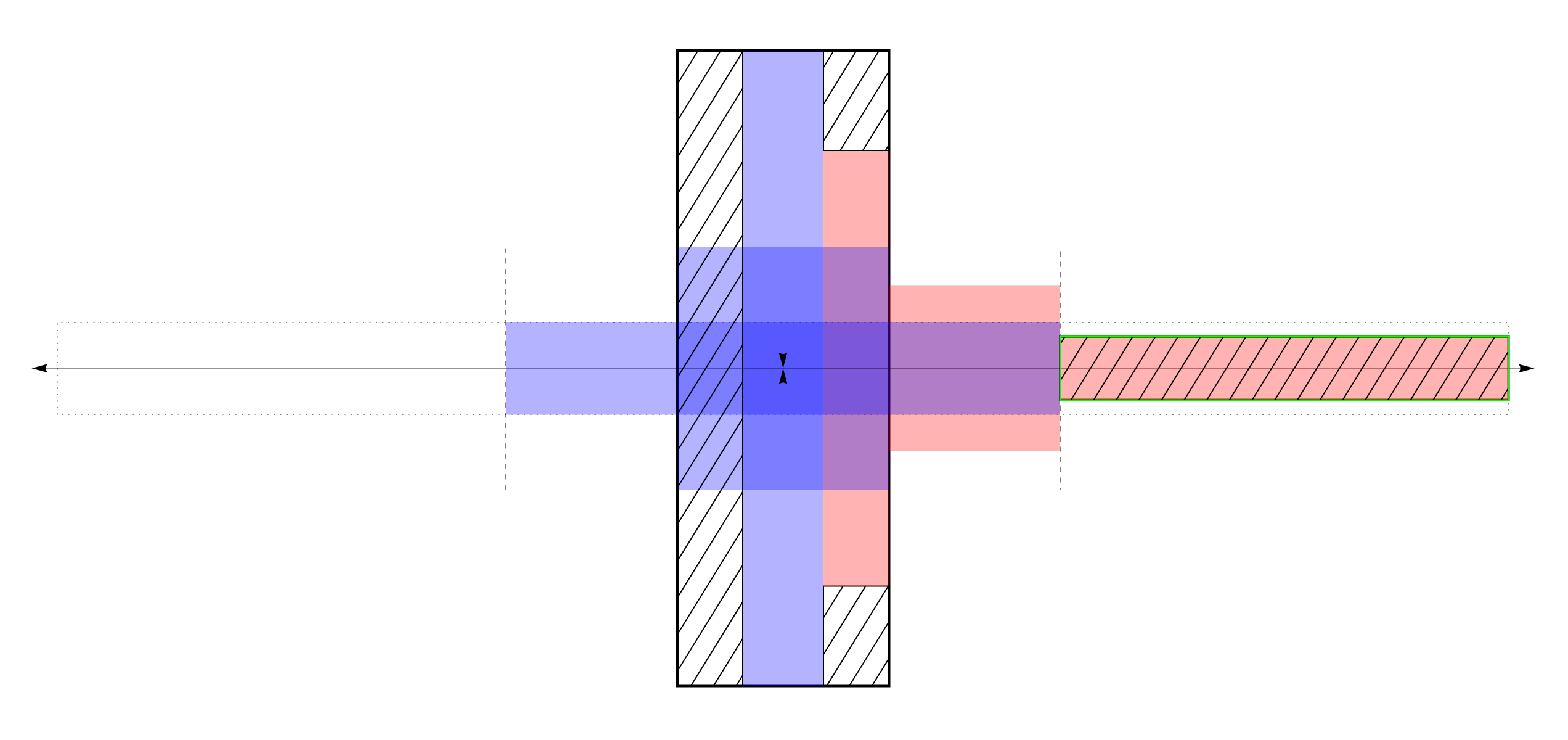}
\caption{$U_1^{(n)}(\tau_1)$ is the box outlined with a black line, $U_2^{(n)}(\tau_2)$ is the green box, the first iterate of  $U_1^{(n)}(\tau_1)$ corresponds to the black, dashed  box, while the second iterate corresponds to the black, dotted box. $\A$ corresponds to the regions covered by slanted lines. In this case, $h_1>\lambda^2h_2$. The same comment regarding the shape of $U_i^{(n)}$ as in the caption of Figure~\ref{fig:2dcase1} applies.}
\label{fig:2dcase2}
\end{figure}

Before we compute the extremal index function we express the turning point $h_1=\lambda^2h_2$ as $\tau_1=\frac{2\lambda\tau_2}{\lambda-1}$ or $\alpha=\frac{2\lambda}{3\lambda-1}$, for $\alpha=\frac{\tau_1}{\tau_1+\tau_2}$. 

When $\tau_1\leq\frac{2\lambda\tau_2}{\lambda-1}$, $A_n^{(q)}$ is the union of two rectangles: one with side lengths $\epsilon(1 - \lambda^{-1})$ and $h_1$, the other with side lengths $\lambda(\lambda-1) \epsilon$ and $h_2$. We thus compute $\theta(\ttau)$ as follows:
\begin{align*}
\theta(\tau_1,\tau_2)&=\lim_{n\to\infty}\frac{\mu\left(U_1^{(n)}(\tau_1)\setminus \left(f^{-1}\left(U_1^{(n)}(\tau_1)\right)\cup f^{-2}\left(U_2^{(n)}(\tau_2)\right)\right)\right)+\mu\left(U_2^{(n)}(\tau_2)\right)}{\mu\left(U_1^{(n)}(\tau_1)\right)+\mu\left(U_2^{(n)}(\tau_2)\right)}\\
&=\lim_{n\to\infty}\frac{(1-\lambda^{-1})\eps h_1+\eps (\lambda^2-\lambda) h_2}{2\eps h_1+\eps (\lambda^2-\lambda) h_2}=\lim_{n\to\infty}\frac{\frac{(1-\lambda^{-1})\tau_1}{2n}+\frac{\tau_2}{n}}{\frac{\tau_1}n+\frac{\tau_2}n}=1-\frac{(1+\lambda^{-1})\alpha}2.
\end{align*}
When  $\tau_1>\frac{2\lambda\tau_2}{\lambda-1}$, we must add two additional rectangles to the previous union, both with side lengths $\epsilon(1- \lambda^{-1})$ and $\frac{h_1 - \lambda^2 h_2}{2}$ , and we thus have, using the same formula:
\begin{align*}
\theta(\tau_1,\tau_2)
&=\lim_{n\to\infty}\frac{(1-\lambda^{-1}) \eps  h_1+(1-\lambda^{-1}) \eps  (h_1-\lambda^2 h_2)+\eps (\lambda^2-\lambda) h_2}{2\eps h_1+\eps (\lambda^2-\lambda) h_2}\\
&=\lim_{n\to\infty}\frac{\frac{(1-\lambda^{-1})\tau_1}{2n}+(1-\lambda^{-1})\left(\frac{\tau_1}{2n}-\frac{\lambda\tau_2}{(\lambda-1)n}\right)+\frac{\tau_2}{n}}{\frac{\tau_1}n+\frac{\tau_2}n}=\frac{(\lambda-1)\alpha}\lambda.
\end{align*}
Observe that $\ttheta=(\theta_1,\theta_2)=(1-\lambda^{-1},1)$, so
\[  \theta(\alpha, 1-\alpha)= \begin{cases} 
1-\frac{(1+\lambda^{-1})\alpha}2 & \text{ if } \alpha\in \left[0,\frac{2\lambda}{3\lambda-1}\right],\\
\frac{(\lambda-1)\alpha}\lambda& \text{ if } \alpha\in \left(\frac{2\lambda}{3\lambda-1},1\right],
\end{cases}
 \qquad D(\alpha)= \begin{cases} 1-\frac\alpha2 & \text{ if } \alpha \in [0,2/3],\\
\alpha & \text{ if } \alpha \in (2/3, 1].
\end{cases}
\]
\begin{figure}
\includegraphics[scale=0.4]{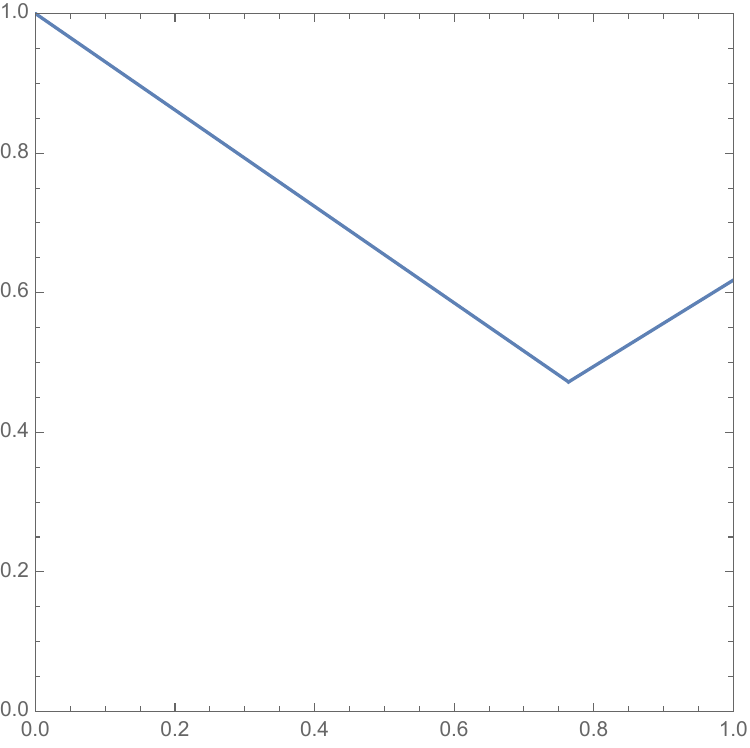}\hspace{2cm}\includegraphics[scale=0.4]{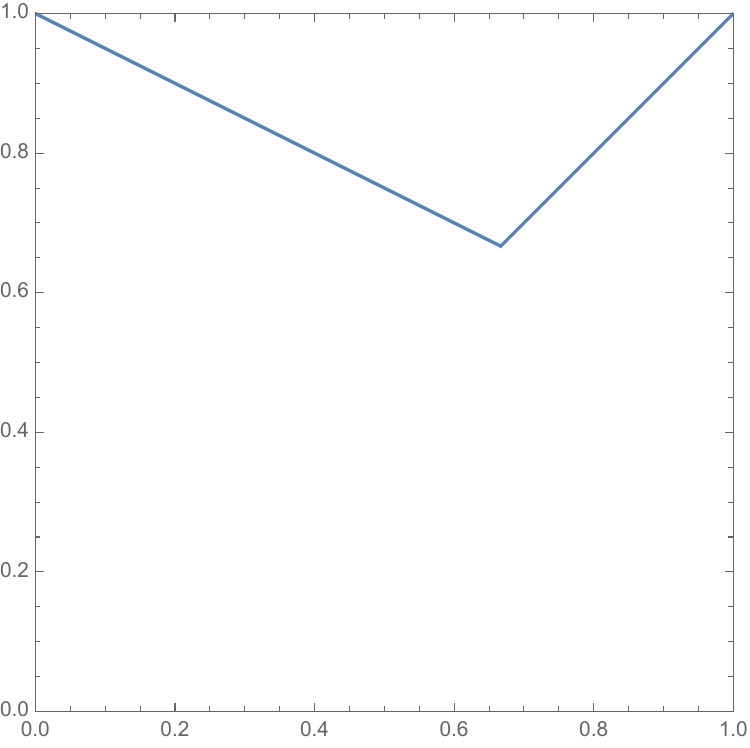}
\caption{On the left is the graph of the extremal index function and on the right is the graph of the Pickands $D$ function, both associated to $\Gamma$ for the cat map example.}
\label{fig:2dgraphs}
\end{figure}
Observe that in this case, we have no spatial connection as the maximal sets have an empty intersection, which immediately translates into the usual formula associated to independence for $\hat\Gamma$. However, there exists a temporal link between the maximal sets, which creates clustering and dependence between the components extremal limiting behaviour. The asymmetry observed in Pickands dependence function is due to the fact that an exceedance in the first component is likely to be followed by one in the second component, but not the other way around. 

We also remark in the illustrative examples we considered here, the nice scaling properties of Lebesgue measure, the piecewise linear nature of the maps considered and the regularity of the observables came together to give us piecewise linear Pickands dependence functions, but the mechanisms used can provide more general shapes, which will be further investigated in forthcoming works.

\appendix

\section{Dependence functions and copulas}
\label{sec:appendix_copulas}

In the first classical works of multivariate Extreme Value Theory, the dependence between the components was described by copulas, whose relation with the stable dependence function we clarify here. We start by introducing the copula of a multivariate \df $\FF$, following \cite{H89a} closely. 
\begin{definition}
Let $\mathbb Y=(Y_1,\ldots Y_d)$ be a random vector defined on a probability space $(\Omega,\mathcal B,\p)$ with \df $\FF$. We define the \emph{copula} $C_{\FF}$ as a multivariate \df supported on $[0,1]^d$ and such that
$$
C_{\FF}(\tt)=\p(F_1(Y_1)\leq t_1,\ldots, F_d(Y_d)\leq t_d),
$$
where $(F_j)_j$ denote the marginals of $\FF$. 
\end{definition}
The copula describes how the dependence between the components affects the joint distribution in the sense that we can recover $\FF$ from its marginals: $\FF(\tt)=C_{\FF}(F_1(t_1),\ldots,F_d(t_d))$. 
\begin{definition}
A copula $C$ is said to be an \emph{extreme value copula} if it is the copula associated to a \df arising as a weak limit for the distribution of $\MM_n$.
\end{definition}
\begin{remark}
Observe that both $C_H$ and $C_{\hat H}$ are extreme value copulas and since $\hat H$ has uniform marginals then $C_{\hat H}=\hat H$ (see \cite[Section~1]{N94}).
\end{remark}
In the \iid setting, letting $\FF$ denote the \df of $\hat \XX_0$, then $\FF_n(\tt):=(\FF(\tt))^n$ is the \df of $\hat \MM_n$ and one can also show that for every $n\in \N$, we have $C_{\FF_n}(\tt)=\left(C_{\FF}(\tt^{1/n})\right)^n$ (see \cite[Lemma~2.2]{H89a}), which eventually leads to the homogeneity property:
\begin{equation}
\label{eq:copula-homogeneity}
C_{\hat H}(\tt^c)=\left(C_{\hat H}(\tt)\right)^c \qquad \text{for all}\quad \tt\in[0,1]^d\quad\text{and}\quad c>0.
\end{equation} 
This homogeneity property is often referred to as max-stability and it is easy to see that in the \iid setting the class of extreme value copulas coincides with that of max-stable copulas. Moreover, weak convergence of a sequence of multivariate distributions can be decomposed into two parts, one corresponding to the convergence of the marginals (which is a univariate problem) and the convergence of the copulas (see \cite[Section~3]{H89a} and references therein). Since the weak convergence of the marginals is a univariate problem, the main interest of multivariate analysis lies in understanding the copulas. 

From the above the following lemma follows easily, giving an alternative definition of $\Gamma$.
\begin{lemma}
The stable dependence function associated to an extreme value copula $C$ for $\ttau\in [0,\infty)^d$ can be written as
\begin{equation*}
\label{eq:gamma-hat-def}
\Gamma(\ttau)=-\log C \left(\e^{-\ttau}\right).
\end{equation*}
\end{lemma}

\section{Pickands dependence function}

In this section, we make a brief summary of the properties of the Pickands dependence function on the classical setting of \iid sequences, providing some heuristics on its interpretation. We refer to \cite[Sections 8 and 9]{BGTS04} for further details on the subject and examples.

The Pickands dependence function provides a rather convenient way of describing the extremal component dependence. This is particularly true in the bivariate case, where it provides a nice pictorial insight about the extremal interconnection between the components. 
Since we are in the classical setting, we can write the Pickands function as:
$$
D(\alpha)=\hat\Gamma(\alpha, 1-\alpha), \qquad\text{for $\alpha\in[0,1]$}.
$$  
The convexity of $\hat \Gamma$ and the relationship \eqref{eq:bounds} imply that $D$ is convex and bounded by:
\begin{equation}
\label{eq:bounds-Pickands}
\max\{\alpha,1-\alpha\}\leq D(\alpha)\leq 1.
\end{equation}
The closer the graph of $D$ is to the constant function $1$, the closer we are to component-wise extremal independence, while the closer the graph is to the function  $g(\alpha)=\max\{\alpha,1-\alpha\}$, the closer we are to asymptotic perfect association. 

In the classical setting, many statistical models were introduced to describe extremal component limiting dependence.  In order to illustrate a possible extremal component dependence behaviour, we consider the so called logistic model (see for example \cite[Section~ 9.2.2]{BGTS04}), for which we have 
$\hat\Gamma(\ttau)=\left(\tau_1^{1/\beta}+\tau_2^{1/\beta}\right)^\beta$, where $\beta\in(0,1]$ is a tuning parameter determining the strength of the asymptotic link between the two components. In this case, the Pickands dependence function is given by $D(\alpha)=\left(\alpha^{1/\beta}+(1-\alpha)^{1/\beta}\right)^\beta$. 
\begin{figure}
\includegraphics[width=7cm]{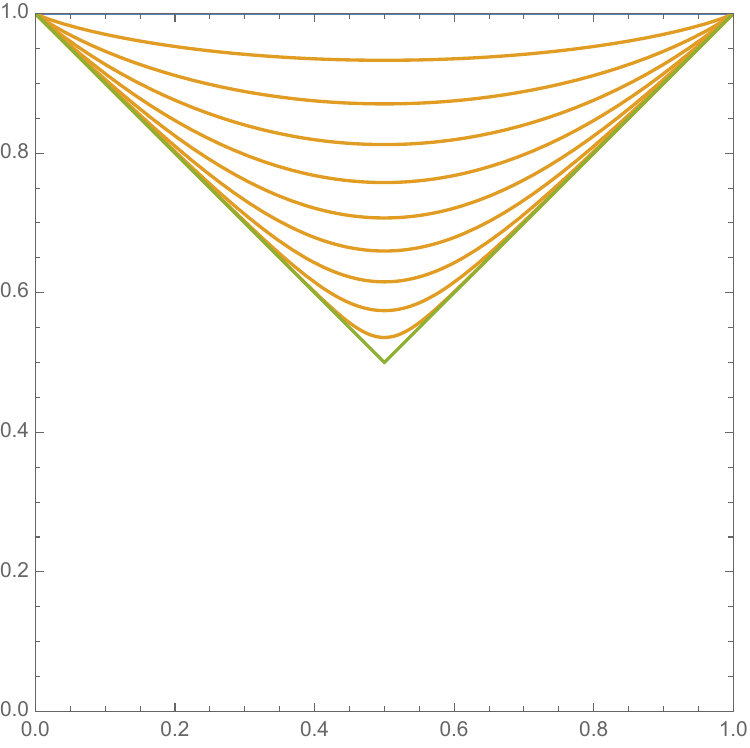}
\caption{Graphs of the upper and lower bounds for the Pickands dependence function given in \eqref{eq:bounds-Pickands} and the Pickands dependence function of the logistic model for parameters $\beta=0.1,0.2,\ldots,0.9$. Note that $\beta=0.1$ corresponds to the lower graph and $\beta=0.9$ the upper one.}
\label{fig:logistic-Pickands}
\end{figure}
Observe that as it can be seen in Figure~\ref{fig:logistic-Pickands} for $\beta$ close to 1 we get asymptotic independence and for $\beta$ close to 0 we get perfect association.

\bibliographystyle{amsalpha}

\bibliography{Multivariate.bib}

@misc{FFT25,
	annote = {Preprint arXiv:2011.10153},
	arxiv = {https://arxiv.org/abs/2011.10153},
	author = {Freitas, Ana Cristina Moreira and Freitas, Jorge Milhazes and Todd, Mike},
	date-added = {2025-09-24 19:05:28 +0100},
	date-modified = {2025-09-24 19:08:00 +0100},
	doi = {10.2422/2036-2145.202401_016},
	howpublished = {to appear in Annali della Scuola Normale Superiore di Pisa. Classe di Scienze. (DOI:10.2422/2036-2145.202401\_016)},
	title = {Enriched functional limit theorems for dynamical systems},
	year = {2025},
	bdsk-url-1 = {https://arxiv.org/abs/2011.10153}}

@article{CFF24,
	author = {Correia, Corentin and Freitas, Ana Cristina Moreira and Freitas, Jorge Milhazes},
	date-added = {2025-09-20 19:15:02 +0100},
	date-modified = {2025-09-20 19:15:02 +0100},
	doi = {10.1016/j.physd.2023.133968},
	fjournal = {Physica D. Nonlinear Phenomena},
	issn = {0167-2789,1872-8022},
	journal = {Phys. D},
	mrclass = {37A50 (60G55 62H30)},
	mrnumber = {4663475},
	pages = {Paper No. 133968},
	title = {Cluster distributions for dynamically defined point processes},
	url = {https://doi.org/10.1016/j.physd.2023.133968},
	volume = {457},
	year = {2024},
	bdsk-file-1 = {YnBsaXN0MDDSAQIDBFxyZWxhdGl2ZVBhdGhYYm9va21hcmtfEDsuLi8uLi8uLi8uLi8uLi8uLi9Eb2N1bWVudHMvQmlibGlvZ3JhZmlhQXJxdWl2by9DRkYyMDI0LnBkZk8RBFhib29rWAQAAAAABBAwAAAAAAAAAAAAAAAAAAAAAAAAAAAAAAAAAAAAAAAAAAAAAAA8AwAABQAAAAEBAABVc2VycwAAAAgAAAABAQAAam1mcmVpdGEJAAAAAQEAAERvY3VtZW50cwAAABMAAAABAQAAQmlibGlvZ3JhZmlhQXJxdWl2bwALAAAAAQEAAENGRjIwMjQucGRmABQAAAABBgAABAAAABQAAAAkAAAAOAAAAFQAAAAIAAAABAMAANC8CgAAAAAACAAAAAQDAAAnZQsAAAAAAAgAAAAEAwAAcHEMAAAAAAAIAAAABAMAAD10DAAAAAAACAAAAAQDAACqqAwAAAAAABQAAAABBgAAhAAAAJQAAACkAAAAtAAAAMQAAAAIAAAAAAQAAEHFkmC9gAAAGAAAAAECAAABAAAAAAAAAA8AAAAAAAAAAAAAAAAAAAAIAAAABAMAAAMAAAAAAAAABAAAAAMDAAD1AQAACAAAAAEJAABmaWxlOi8vLwwAAAABAQAAQXRyb2xhYmlvLUhECAAAAAQDAAAA4AHj6AAAAAgAAAAABAAAQccohIaAAAAkAAAAAQEAADRGOTgyQkRELTA3OEMtNEE2NC1BNTQwLUUzQTQyNEEwOUFCQxgAAAABAgAAgQAAAAEAAADvEwAAAQAAAAAAAAAAAAAAAQAAAAEBAAAvAAAAAAAAAAEFAAAaAAAAAQEAAE5TVVJMRG9jdW1lbnRJZGVudGlmaWVyS2V5AAAEAAAAAwMAAM4DAADrAAAAAQIAADY0NjY5MzJlMzY1OThiYzY5NWVmMWJmZjg5ODgxMDczYTRjY2Q0NjdjZTY0YjI5NTM5NTQ0N2QwNmNlNTNiMGM7MDA7MDAwMDAwMDA7MDAwMDAwMDA7MDAwMDAwMDA7MDAwMDAwMDAwMDAwMDAyMDtjb20uYXBwbGUuYXBwLXNhbmRib3gucmVhZC13cml0ZTswMTswMTAwMDAwNDswMDAwMDAwMDAwMGNhOGFhOzVmOy91c2Vycy9qbWZyZWl0YS9kb2N1bWVudHMvYmlibGlvZ3JhZmlhYXJxdWl2by9jZmYyMDI0LnBkZgAALwAAAAEBAABOU1VSTEJvb2ttYXJrUXVhcmFudGluZU1vdW50ZWROZXR3b3JrVm9sdW1lc0tleQDkAAAA/v///wEAAAAAAAAAEgAAAAQQAABoAAAAAAAAAAUQAADUAAAAAAAAABAQAAAAAQAAAAAAAEAQAADwAAAAAAAAAAIgAADMAQAAAAAAAAUgAAA8AQAAAAAAABAgAABMAQAAAAAAABEgAACAAQAAAAAAABIgAABgAQAAAAAAABMgAABwAQAAAAAAACAgAACsAQAAAAAAADAgAADYAQAAAAAAAAHAAAAgAQAAAAAAABHAAAAUAAAAAAAAABLAAAAwAQAAAAAAAIDwAAAQAgAAAAAAAOABAIAEAgAAAAAAAAQDAIDYAQAAAAAAAAAIAA0AGgAjAGEAAAAAAAACAQAAAAAAAAAFAAAAAAAAAAAAAAAAAAAEvQ==},
	bdsk-url-1 = {https://doi.org/10.1016/j.physd.2023.133968},
	bdsk-url-2 = {https://www.cmup.pt/bibcite/reference/7727}}

@article{FFS21,
	author = {Freitas, Ana Cristina Moreira and Freitas, Jorge Milhazes and Soares, Jorge Valentim},
	date-added = {2025-09-20 19:14:41 +0100},
	date-modified = {2025-09-20 19:14:41 +0100},
	doi = {10.1088/1751-8121/ac16c6},
	fjournal = {Journal of Physics. A. Mathematical and Theoretical},
	issn = {1751-8113},
	journal = {J. Phys. A},
	mrclass = {60G70 (28A80 60G18)},
	mrnumber = {4318561},
	number = {34},
	pages = {Paper No. 345202, 30},
	title = {Rare events for product fractal sets},
	url = {https://doi.org/10.1088/1751-8121/ac16c6},
	volume = {54},
	year = {2021},
	bdsk-file-1 = {YnBsaXN0MDDSAQIDBFxyZWxhdGl2ZVBhdGhYYm9va21hcmtfEDsuLi8uLi8uLi8uLi8uLi8uLi9Eb2N1bWVudHMvQmlibGlvZ3JhZmlhQXJxdWl2by9GRlMyMDIxLnBkZk8RBFhib29rWAQAAAAABBAwAAAAAAAAAAAAAAAAAAAAAAAAAAAAAAAAAAAAAAAAAAAAAAA8AwAABQAAAAEBAABVc2VycwAAAAgAAAABAQAAam1mcmVpdGEJAAAAAQEAAERvY3VtZW50cwAAABMAAAABAQAAQmlibGlvZ3JhZmlhQXJxdWl2bwALAAAAAQEAAEZGUzIwMjEucGRmABQAAAABBgAABAAAABQAAAAkAAAAOAAAAFQAAAAIAAAABAMAANC8CgAAAAAACAAAAAQDAAAnZQsAAAAAAAgAAAAEAwAAcHEMAAAAAAAIAAAABAMAAD10DAAAAAAACAAAAAQDAAAGqgwAAAAAABQAAAABBgAAhAAAAJQAAACkAAAAtAAAAMQAAAAIAAAAAAQAAEHD6y/FAAAAGAAAAAECAAABAAAAAAAAAA8AAAAAAAAAAAAAAAAAAAAIAAAABAMAAAMAAAAAAAAABAAAAAMDAAD1AQAACAAAAAEJAABmaWxlOi8vLwwAAAABAQAAQXRyb2xhYmlvLUhECAAAAAQDAAAA4AHj6AAAAAgAAAAABAAAQccohIaAAAAkAAAAAQEAADRGOTgyQkRELTA3OEMtNEE2NC1BNTQwLUUzQTQyNEEwOUFCQxgAAAABAgAAgQAAAAEAAADvEwAAAQAAAAAAAAAAAAAAAQAAAAEBAAAvAAAAAAAAAAEFAAAaAAAAAQEAAE5TVVJMRG9jdW1lbnRJZGVudGlmaWVyS2V5AAAEAAAAAwMAAAwFAADrAAAAAQIAADJiNmY4MmQ4MDQ0NzFlMGUyNzEwNTgyYTM4YTNkNGJiOTE3OWFlOGVjODlmOTE3MGQ2NzNhYmNhMDcwYzJmMGE7MDA7MDAwMDAwMDA7MDAwMDAwMDA7MDAwMDAwMDA7MDAwMDAwMDAwMDAwMDAyMDtjb20uYXBwbGUuYXBwLXNhbmRib3gucmVhZC13cml0ZTswMTswMTAwMDAwNDswMDAwMDAwMDAwMGNhYTA2OzVmOy91c2Vycy9qbWZyZWl0YS9kb2N1bWVudHMvYmlibGlvZ3JhZmlhYXJxdWl2by9mZnMyMDIxLnBkZgAALwAAAAEBAABOU1VSTEJvb2ttYXJrUXVhcmFudGluZU1vdW50ZWROZXR3b3JrVm9sdW1lc0tleQDkAAAA/v///wEAAAAAAAAAEgAAAAQQAABoAAAAAAAAAAUQAADUAAAAAAAAABAQAAAAAQAAAAAAAEAQAADwAAAAAAAAAAIgAADMAQAAAAAAAAUgAAA8AQAAAAAAABAgAABMAQAAAAAAABEgAACAAQAAAAAAABIgAABgAQAAAAAAABMgAABwAQAAAAAAACAgAACsAQAAAAAAADAgAADYAQAAAAAAAAHAAAAgAQAAAAAAABHAAAAUAAAAAAAAABLAAAAwAQAAAAAAAIDwAAAQAgAAAAAAAOABAIAEAgAAAAAAAAQDAIDYAQAAAAAAAAAIAA0AGgAjAGEAAAAAAAACAQAAAAAAAAAFAAAAAAAAAAAAAAAAAAAEvQ==},
	bdsk-url-1 = {https://doi.org/10.1088/1751-8121/ac16c6}}

@article{FFRS20,
	arxiv = {https://arxiv.org/abs/1903.07200},
	author = {Freitas, Ana Cristina Moreira and Freitas, Jorge Milhazes and Rodrigues, Fagner B. and Soares, Jorge Valentim},
	date-added = {2025-09-20 18:35:09 +0100},
	date-modified = {2025-09-20 18:35:09 +0100},
	doi = {10.1007/s00220-020-03794-1},
	fjournal = {Communications in Mathematical Physics},
	issn = {0010-3616},
	journal = {Comm. Math. Phys.},
	mrclass = {37A50 (37C45)},
	mrnumber = {4124982},
	number = {1},
	pages = {75--115},
	title = {Rare events for {C}antor target sets},
	url = {https://doi.org/10.1007/s00220-020-03794-1},
	volume = {378},
	year = {2020},
	bdsk-file-1 = {YnBsaXN0MDDSAQIDBFxyZWxhdGl2ZVBhdGhYYm9va21hcmtfEDwuLi8uLi8uLi8uLi8uLi8uLi9Eb2N1bWVudHMvQmlibGlvZ3JhZmlhQXJxdWl2by9GRlJTMjAyMC5wZGZPEQRYYm9va1gEAAAAAAQQMAAAAAAAAAAAAAAAAAAAAAAAAAAAAAAAAAAAAAAAAAAAAAAAPAMAAAUAAAABAQAAVXNlcnMAAAAIAAAAAQEAAGptZnJlaXRhCQAAAAEBAABEb2N1bWVudHMAAAATAAAAAQEAAEJpYmxpb2dyYWZpYUFycXVpdm8ADAAAAAEBAABGRlJTMjAyMC5wZGYUAAAAAQYAAAQAAAAUAAAAJAAAADgAAABUAAAACAAAAAQDAADQvAoAAAAAAAgAAAAEAwAAJ2ULAAAAAAAIAAAABAMAAHBxDAAAAAAACAAAAAQDAAA9dAwAAAAAAAgAAAAEAwAAV6kMAAAAAAAUAAAAAQYAAIQAAACUAAAApAAAALQAAADEAAAACAAAAAAEAABBwoTYc4AAABgAAAABAgAAAQAAAAAAAAAPAAAAAAAAAAAAAAAAAAAACAAAAAQDAAADAAAAAAAAAAQAAAADAwAA9QEAAAgAAAABCQAAZmlsZTovLy8MAAAAAQEAAEF0cm9sYWJpby1IRAgAAAAEAwAAAOAB4+gAAAAIAAAAAAQAAEHHKISGgAAAJAAAAAEBAAA0Rjk4MkJERC0wNzhDLTRBNjQtQTU0MC1FM0E0MjRBMDlBQkMYAAAAAQIAAIEAAAABAAAA7xMAAAEAAAAAAAAAAAAAAAEAAAABAQAALwAAAAAAAAABBQAAGgAAAAEBAABOU1VSTERvY3VtZW50SWRlbnRpZmllcktleQAABAAAAAMDAABgBAAA7AAAAAECAAAwYWJiMzdhYmZiNjY4YmM1ZmFhZjlmNmU4NmVhYzFhNTUzYTc2ZjYyNDAyZDIwNDczNDZlYjdhNWY1MmY3YmUzOzAwOzAwMDAwMDAwOzAwMDAwMDAwOzAwMDAwMDAwOzAwMDAwMDAwMDAwMDAwMjA7Y29tLmFwcGxlLmFwcC1zYW5kYm94LnJlYWQtd3JpdGU7MDE7MDEwMDAwMDQ7MDAwMDAwMDAwMDBjYTk1Nzs1ZjsvdXNlcnMvam1mcmVpdGEvZG9jdW1lbnRzL2JpYmxpb2dyYWZpYWFycXVpdm8vZmZyczIwMjAucGRmAC8AAAABAQAATlNVUkxCb29rbWFya1F1YXJhbnRpbmVNb3VudGVkTmV0d29ya1ZvbHVtZXNLZXkA5AAAAP7///8BAAAAAAAAABIAAAAEEAAAaAAAAAAAAAAFEAAA1AAAAAAAAAAQEAAAAAEAAAAAAABAEAAA8AAAAAAAAAACIAAAzAEAAAAAAAAFIAAAPAEAAAAAAAAQIAAATAEAAAAAAAARIAAAgAEAAAAAAAASIAAAYAEAAAAAAAATIAAAcAEAAAAAAAAgIAAArAEAAAAAAAAwIAAA2AEAAAAAAAABwAAAIAEAAAAAAAARwAAAFAAAAAAAAAASwAAAMAEAAAAAAACA8AAAEAIAAAAAAADgAQCABAIAAAAAAAAEAwCA2AEAAAAAAAAACAANABoAIwBiAAAAAAAAAgEAAAAAAAAABQAAAAAAAAAAAAAAAAAABL4=},
	bdsk-url-1 = {https://doi.org/10.1007/s00220-020-03794-1},
	bdsk-url-2 = {https://arxiv.org/abs/1903.07200}}

@article{AFFR17,
	author = {Azevedo, Davide and Freitas, Ana Cristina Moreira and Freitas, Jorge Milhazes and Rodrigues, Fagner B.},
	date-added = {2025-09-20 18:34:56 +0100},
	date-modified = {2025-09-20 18:34:56 +0100},
	doi = {10.1007/s10955-017-1767-1},
	fjournal = {Journal of Statistical Physics},
	issn = {0022-4715},
	journal = {J. Stat. Phys.},
	mrclass = {37A50 (37B20 37C25 60G10 60G70)},
	mrnumber = {3647060},
	number = {5},
	pages = {1244--1261},
	title = {Extreme {V}alue {L}aws for {D}ynamical {S}ystems with {C}ountable {E}xtremal {S}ets},
	url = {http://dx.doi.org/10.1007/s10955-017-1767-1},
	volume = {167},
	year = {2017},
	bdsk-file-1 = {YnBsaXN0MDDSAQIDBFxyZWxhdGl2ZVBhdGhYYm9va21hcmtfEDwuLi8uLi8uLi8uLi8uLi8uLi9Eb2N1bWVudHMvQmlibGlvZ3JhZmlhQXJxdWl2by9BRkZSMjAxNy5wZGZPEQRYYm9va1gEAAAAAAQQMAAAAAAAAAAAAAAAAAAAAAAAAAAAAAAAAAAAAAAAAAAAAAAAPAMAAAUAAAABAQAAVXNlcnMAAAAIAAAAAQEAAGptZnJlaXRhCQAAAAEBAABEb2N1bWVudHMAAAATAAAAAQEAAEJpYmxpb2dyYWZpYUFycXVpdm8ADAAAAAEBAABBRkZSMjAxNy5wZGYUAAAAAQYAAAQAAAAUAAAAJAAAADgAAABUAAAACAAAAAQDAADQvAoAAAAAAAgAAAAEAwAAJ2ULAAAAAAAIAAAABAMAAHBxDAAAAAAACAAAAAQDAAA9dAwAAAAAAAgAAAAEAwAAPqgMAAAAAAAUAAAAAQYAAIQAAACUAAAApAAAALQAAADEAAAACAAAAAAEAABBvtHmVQAAABgAAAABAgAAAQAAAAAAAAAPAAAAAAAAAAAAAAAAAAAACAAAAAQDAAADAAAAAAAAAAQAAAADAwAA9QEAAAgAAAABCQAAZmlsZTovLy8MAAAAAQEAAEF0cm9sYWJpby1IRAgAAAAEAwAAAOAB4+gAAAAIAAAAAAQAAEHHKISGgAAAJAAAAAEBAAA0Rjk4MkJERC0wNzhDLTRBNjQtQTU0MC1FM0E0MjRBMDlBQkMYAAAAAQIAAIEAAAABAAAA7xMAAAEAAAAAAAAAAAAAAAEAAAABAQAALwAAAAAAAAABBQAAGgAAAAEBAABOU1VSTERvY3VtZW50SWRlbnRpZmllcktleQAABAAAAAMDAAB8AwAA7AAAAAECAABkMTBiMTNlYTNmZGQyNmNmNTVkYmJjYTUzMTVkY2MxZTA3YzUxMTYyNzRkMTVlMTg1ZmExOGZhMWE3YzU0YzA3OzAwOzAwMDAwMDAwOzAwMDAwMDAwOzAwMDAwMDAwOzAwMDAwMDAwMDAwMDAwMjA7Y29tLmFwcGxlLmFwcC1zYW5kYm94LnJlYWQtd3JpdGU7MDE7MDEwMDAwMDQ7MDAwMDAwMDAwMDBjYTgzZTs1ZjsvdXNlcnMvam1mcmVpdGEvZG9jdW1lbnRzL2JpYmxpb2dyYWZpYWFycXVpdm8vYWZmcjIwMTcucGRmAC8AAAABAQAATlNVUkxCb29rbWFya1F1YXJhbnRpbmVNb3VudGVkTmV0d29ya1ZvbHVtZXNLZXkA5AAAAP7///8BAAAAAAAAABIAAAAEEAAAaAAAAAAAAAAFEAAA1AAAAAAAAAAQEAAAAAEAAAAAAABAEAAA8AAAAAAAAAACIAAAzAEAAAAAAAAFIAAAPAEAAAAAAAAQIAAATAEAAAAAAAARIAAAgAEAAAAAAAASIAAAYAEAAAAAAAATIAAAcAEAAAAAAAAgIAAArAEAAAAAAAAwIAAA2AEAAAAAAAABwAAAIAEAAAAAAAARwAAAFAAAAAAAAAASwAAAMAEAAAAAAACA8AAAEAIAAAAAAADgAQCABAIAAAAAAAAEAwCA2AEAAAAAAAAACAANABoAIwBiAAAAAAAAAgEAAAAAAAAABQAAAAAAAAAAAAAAAAAABL4=},
	bdsk-url-1 = {http://dx.doi.org/10.1007/s10955-017-1767-1}}

@article{AFFR16,
	arxiv = {http://arxiv.org/abs/1505.01553},
	author = {Azevedo, Davide and Freitas, Ana Cristina Moreira and Freitas, Jorge Milhazes and Rodrigues, Fagner B.},
	date-added = {2025-09-20 18:34:56 +0100},
	date-modified = {2025-09-20 18:34:56 +0100},
	doi = {10.1016/j.physd.2015.10.002},
	fjournal = {Physica D. Nonlinear Phenomena},
	issn = {0167-2789},
	journal = {Phys. D},
	mrclass = {60G70 (37A50 60G55)},
	mrnumber = {3426918},
	pages = {33--48},
	title = {Clustering of extreme events created by multiple correlated maxima},
	url = {http://dx.doi.org/10.1016/j.physd.2015.10.002},
	volume = {315},
	year = {2016},
	bdsk-file-1 = {YnBsaXN0MDDSAQIDBFxyZWxhdGl2ZVBhdGhYYm9va21hcmtfEDwuLi8uLi8uLi8uLi8uLi8uLi9Eb2N1bWVudHMvQmlibGlvZ3JhZmlhQXJxdWl2by9BRkZSMjAxNi5wZGZPEQRYYm9va1gEAAAAAAQQMAAAAAAAAAAAAAAAAAAAAAAAAAAAAAAAAAAAAAAAAAAAAAAAPAMAAAUAAAABAQAAVXNlcnMAAAAIAAAAAQEAAGptZnJlaXRhCQAAAAEBAABEb2N1bWVudHMAAAATAAAAAQEAAEJpYmxpb2dyYWZpYUFycXVpdm8ADAAAAAEBAABBRkZSMjAxNi5wZGYUAAAAAQYAAAQAAAAUAAAAJAAAADgAAABUAAAACAAAAAQDAADQvAoAAAAAAAgAAAAEAwAAJ2ULAAAAAAAIAAAABAMAAHBxDAAAAAAACAAAAAQDAAA9dAwAAAAAAAgAAAAEAwAAoKkMAAAAAAAUAAAAAQYAAIQAAACUAAAApAAAALQAAADEAAAACAAAAAAEAABBvGrf3AAAABgAAAABAgAAAQAAAAAAAAAPAAAAAAAAAAAAAAAAAAAACAAAAAQDAAADAAAAAAAAAAQAAAADAwAA9QEAAAgAAAABCQAAZmlsZTovLy8MAAAAAQEAAEF0cm9sYWJpby1IRAgAAAAEAwAAAOAB4+gAAAAIAAAAAAQAAEHHKISGgAAAJAAAAAEBAAA0Rjk4MkJERC0wNzhDLTRBNjQtQTU0MC1FM0E0MjRBMDlBQkMYAAAAAQIAAIEAAAABAAAA7xMAAAEAAAAAAAAAAAAAAAEAAAABAQAALwAAAAAAAAABBQAAGgAAAAEBAABOU1VSTERvY3VtZW50SWRlbnRpZmllcktleQAABAAAAAMDAACpBAAA7AAAAAECAABmZmY0NmM2YWRiMTJhNTA3Yjc0YjUwZGM2MjQ1YWMxNDk0YzJhY2U1YzgyOWE2Yjc5NjM3ZWU2NWI1OGJlNjIzOzAwOzAwMDAwMDAwOzAwMDAwMDAwOzAwMDAwMDAwOzAwMDAwMDAwMDAwMDAwMjA7Y29tLmFwcGxlLmFwcC1zYW5kYm94LnJlYWQtd3JpdGU7MDE7MDEwMDAwMDQ7MDAwMDAwMDAwMDBjYTlhMDs1ZjsvdXNlcnMvam1mcmVpdGEvZG9jdW1lbnRzL2JpYmxpb2dyYWZpYWFycXVpdm8vYWZmcjIwMTYucGRmAC8AAAABAQAATlNVUkxCb29rbWFya1F1YXJhbnRpbmVNb3VudGVkTmV0d29ya1ZvbHVtZXNLZXkA5AAAAP7///8BAAAAAAAAABIAAAAEEAAAaAAAAAAAAAAFEAAA1AAAAAAAAAAQEAAAAAEAAAAAAABAEAAA8AAAAAAAAAACIAAAzAEAAAAAAAAFIAAAPAEAAAAAAAAQIAAATAEAAAAAAAARIAAAgAEAAAAAAAASIAAAYAEAAAAAAAATIAAAcAEAAAAAAAAgIAAArAEAAAAAAAAwIAAA2AEAAAAAAAABwAAAIAEAAAAAAAARwAAAFAAAAAAAAAASwAAAMAEAAAAAAACA8AAAEAIAAAAAAADgAQCABAIAAAAAAAAEAwCA2AEAAAAAAAAACAANABoAIwBiAAAAAAAAAgEAAAAAAAAABQAAAAAAAAAAAAAAAAAABL4=},
	bdsk-url-1 = {http://arxiv.org/abs/1505.01553},
	bdsk-url-2 = {http://dx.doi.org/10.1016/j.physd.2015.10.002}}

@book{BGTS04,
	author = {Beirlant, Jan and Goegebeur, Yuri and Teugels, Jozef and Segers, Johan},
	date-added = {2025-09-17 00:49:17 +0100},
	date-modified = {2025-09-17 00:49:17 +0100},
	doi = {10.1002/0470012382},
	isbn = {0-471-97647-4},
	mrclass = {62-01 (60G70 62G32)},
	mrnumber = {2108013 (2005j:62002)},
	mrreviewer = {L{\'a}szl{\'o} Viharos},
	note = {Theory and applications, With contributions from Daniel De Waal and Chris Ferro},
	pages = {xiv+490},
	publisher = {John Wiley \& Sons, Ltd., Chichester},
	series = {Wiley Series in Probability and Statistics},
	title = {Statistics of extremes},
	url = {http://dx.doi.org/10.1002/0470012382},
	year = {2004},
	bdsk-file-1 = {YnBsaXN0MDDSAQIDBFxyZWxhdGl2ZVBhdGhYYm9va21hcmtfEDwuLi8uLi8uLi8uLi8uLi8uLi9Eb2N1bWVudHMvQmlibGlvZ3JhZmlhQXJxdWl2by9CR1RTMjAwNC5wZGZPEQRYYm9va1gEAAAAAAQQMAAAAAAAAAAAAAAAAAAAAAAAAAAAAAAAAAAAAAAAAAAAAAAAPAMAAAUAAAABAQAAVXNlcnMAAAAIAAAAAQEAAGptZnJlaXRhCQAAAAEBAABEb2N1bWVudHMAAAATAAAAAQEAAEJpYmxpb2dyYWZpYUFycXVpdm8ADAAAAAEBAABCR1RTMjAwNC5wZGYUAAAAAQYAAAQAAAAUAAAAJAAAADgAAABUAAAACAAAAAQDAADQvAoAAAAAAAgAAAAEAwAAJ2ULAAAAAAAIAAAABAMAAHBxDAAAAAAACAAAAAQDAAA9dAwAAAAAAAgAAAAEAwAAvKkMAAAAAAAUAAAAAQYAAIQAAACUAAAApAAAALQAAADEAAAACAAAAAAEAABBuregeQAAABgAAAABAgAAAQAAAAAAAAAPAAAAAAAAAAAAAAAAAAAACAAAAAQDAAADAAAAAAAAAAQAAAADAwAA9QEAAAgAAAABCQAAZmlsZTovLy8MAAAAAQEAAEF0cm9sYWJpby1IRAgAAAAEAwAAAOAB4+gAAAAIAAAAAAQAAEHHKISGgAAAJAAAAAEBAAA0Rjk4MkJERC0wNzhDLTRBNjQtQTU0MC1FM0E0MjRBMDlBQkMYAAAAAQIAAIEAAAABAAAA7xMAAAEAAAAAAAAAAAAAAAEAAAABAQAALwAAAAAAAAABBQAAGgAAAAEBAABOU1VSTERvY3VtZW50SWRlbnRpZmllcktleQAABAAAAAMDAADFBAAA7AAAAAECAAA0ZDRhOGE3OGVhNDVkM2Q3OWI3YTU2ZDk2OTUxYzIxZDhiMGU3MTVhMWJmMzdmNmI4ZmJlOGUxOGY2YzJlYzEzOzAwOzAwMDAwMDAwOzAwMDAwMDAwOzAwMDAwMDAwOzAwMDAwMDAwMDAwMDAwMjA7Y29tLmFwcGxlLmFwcC1zYW5kYm94LnJlYWQtd3JpdGU7MDE7MDEwMDAwMDQ7MDAwMDAwMDAwMDBjYTliYzs1ZjsvdXNlcnMvam1mcmVpdGEvZG9jdW1lbnRzL2JpYmxpb2dyYWZpYWFycXVpdm8vYmd0czIwMDQucGRmAC8AAAABAQAATlNVUkxCb29rbWFya1F1YXJhbnRpbmVNb3VudGVkTmV0d29ya1ZvbHVtZXNLZXkA5AAAAP7///8BAAAAAAAAABIAAAAEEAAAaAAAAAAAAAAFEAAA1AAAAAAAAAAQEAAAAAEAAAAAAABAEAAA8AAAAAAAAAACIAAAzAEAAAAAAAAFIAAAPAEAAAAAAAAQIAAATAEAAAAAAAARIAAAgAEAAAAAAAASIAAAYAEAAAAAAAATIAAAcAEAAAAAAAAgIAAArAEAAAAAAAAwIAAA2AEAAAAAAAABwAAAIAEAAAAAAAARwAAAFAAAAAAAAAASwAAAMAEAAAAAAACA8AAAEAIAAAAAAADgAQCABAIAAAAAAAAEAwCA2AEAAAAAAAAACAANABoAIwBiAAAAAAAAAgEAAAAAAAAABQAAAAAAAAAAAAAAAAAABL4=},
	bdsk-url-1 = {http://dx.doi.org/10.1002/0470012382}}

@article{BN23,
	abstract = {Abstract Heavy rainfall distributional modeling is essential in any impact studies linked to the water cycle, for example, flood risks. Still, statistical analyses that both take into account the temporal and multivariate nature of extreme rainfall are rare, and often, a complex de-clustering step is needed to make extreme rainfall temporally independent. A natural question is how to bypass this de-clustering in a multivariate context. To address this issue, we introduce the stable sums method. Our goal is to incorporate time and space extreme dependencies in the analysis of heavy tails. To reach our goal, we build on large deviations of regularly varying stationary time series. Numerical experiments demonstrate that our novel approach enhances return levels inference in two ways. First, it is robust concerning time dependencies. We implement it alike on independent and dependent observations. In the univariate setting, it improves the accuracy of confidence intervals compared to the main estimators requiring temporal de-clustering. Second, it thoughtfully integrates the spatial dependencies. In simulation, the multivariate stable sums method has a smaller mean squared error than its component-wise implementation. We apply our method to infer high return levels of daily fall precipitation amounts from a national network of weather stations in France.},
	author = {Buritic{\'a}, Gloria and Naveau, Philippe},
	date-added = {2024-05-30 16:33:56 +0100},
	date-modified = {2024-05-30 16:33:56 +0100},
	doi = {https://doi.org/10.1002/env.2782},
	eprint = {https://onlinelibrary.wiley.com/doi/pdf/10.1002/env.2782},
	journal = {Environmetrics},
	keywords = {cluster process, environmental time series, extremal index, multivariate regular variation, stable distributions, stationary time series},
	number = {4},
	pages = {e2782},
	title = {Stable sums to infer high return levels of multivariate rainfall time series},
	url = {https://onlinelibrary.wiley.com/doi/abs/10.1002/env.2782},
	volume = {34},
	year = {2023},
	bdsk-file-1 = {YnBsaXN0MDDSAQIDBFxyZWxhdGl2ZVBhdGhYYm9va21hcmtfEDouLi8uLi8uLi8uLi8uLi8uLi9Eb2N1bWVudHMvQmlibGlvZ3JhZmlhQXJxdWl2by9CTjIwMjMucGRmTxEEWGJvb2tYBAAAAAAEEDAAAAAAAAAAAAAAAAAAAAAAAAAAAAAAAAAAAAAAAAAAAAAAADwDAAAFAAAAAQEAAFVzZXJzAAAACAAAAAEBAABqbWZyZWl0YQkAAAABAQAARG9jdW1lbnRzAAAAEwAAAAEBAABCaWJsaW9ncmFmaWFBcnF1aXZvAAoAAAABAQAAQk4yMDIzLnBkZgAAFAAAAAEGAAAEAAAAFAAAACQAAAA4AAAAVAAAAAgAAAAEAwAA0LwKAAAAAAAIAAAABAMAACdlCwAAAAAACAAAAAQDAABwcQwAAAAAAAgAAAAEAwAAPXQMAAAAAAAIAAAABAMAAF6oDAAAAAAAFAAAAAEGAACEAAAAlAAAAKQAAAC0AAAAxAAAAAgAAAAABAAAQcYEaSCAAAAYAAAAAQIAAAEAAAAAAAAADwAAAAAAAAAAAAAAAAAAAAgAAAAEAwAAAwAAAAAAAAAEAAAAAwMAAPUBAAAIAAAAAQkAAGZpbGU6Ly8vDAAAAAEBAABBdHJvbGFiaW8tSEQIAAAABAMAAADgAePoAAAACAAAAAAEAABBxyiEhoAAACQAAAABAQAANEY5ODJCREQtMDc4Qy00QTY0LUE1NDAtRTNBNDI0QTA5QUJDGAAAAAECAACBAAAAAQAAAO8TAAABAAAAAAAAAAAAAAABAAAAAQEAAC8AAAAAAAAAAQUAABoAAAABAQAATlNVUkxEb2N1bWVudElkZW50aWZpZXJLZXkAAAQAAAADAwAAkgMAAOoAAAABAgAAZDYyMGQ4MDViNTA4MjFkYzkwNmQ0N2Q4ZjhjNmE1NTE3MzdhYmM4NjhmMmU3MzUzNzEyMDZlODhiYWU5MDk1MjswMDswMDAwMDAwMDswMDAwMDAwMDswMDAwMDAwMDswMDAwMDAwMDAwMDAwMDIwO2NvbS5hcHBsZS5hcHAtc2FuZGJveC5yZWFkLXdyaXRlOzAxOzAxMDAwMDA0OzAwMDAwMDAwMDAwY2E4NWU7NWY7L3VzZXJzL2ptZnJlaXRhL2RvY3VtZW50cy9iaWJsaW9ncmFmaWFhcnF1aXZvL2JuMjAyMy5wZGYAAAAvAAAAAQEAAE5TVVJMQm9va21hcmtRdWFyYW50aW5lTW91bnRlZE5ldHdvcmtWb2x1bWVzS2V5AOQAAAD+////AQAAAAAAAAASAAAABBAAAGgAAAAAAAAABRAAANQAAAAAAAAAEBAAAAABAAAAAAAAQBAAAPAAAAAAAAAAAiAAAMwBAAAAAAAABSAAADwBAAAAAAAAECAAAEwBAAAAAAAAESAAAIABAAAAAAAAEiAAAGABAAAAAAAAEyAAAHABAAAAAAAAICAAAKwBAAAAAAAAMCAAANgBAAAAAAAAAcAAACABAAAAAAAAEcAAABQAAAAAAAAAEsAAADABAAAAAAAAgPAAABACAAAAAAAA4AEAgAQCAAAAAAAABAMAgNgBAAAAAAAAAAgADQAaACMAYAAAAAAAAAIBAAAAAAAAAAUAAAAAAAAAAAAAAAAAAAS8},
	bdsk-url-1 = {https://onlinelibrary.wiley.com/doi/abs/10.1002/env.2782},
	bdsk-url-2 = {https://doi.org/10.1002/env.2782}}

@article{FV18,
	author = {Faranda, Davide and Vaienti, Sandro},
	date-added = {2024-05-30 16:14:50 +0100},
	date-modified = {2024-05-30 16:14:50 +0100},
	doi = {10.1063/1.5027386},
	fjournal = {Chaos. An Interdisciplinary Journal of Nonlinear Science},
	issn = {1054-1500},
	journal = {Chaos},
	mrclass = {37M25 (37A50)},
	mrnumber = {3795571},
	number = {4},
	pages = {041103, 9},
	title = {Correlation dimension and phase space contraction via extreme value theory},
	url = {https://doi.org/10.1063/1.5027386},
	volume = {28},
	year = {2018},
	bdsk-file-1 = {YnBsaXN0MDDSAQIDBFxyZWxhdGl2ZVBhdGhYYm9va21hcmtfEDouLi8uLi8uLi8uLi8uLi8uLi9Eb2N1bWVudHMvQmlibGlvZ3JhZmlhQXJxdWl2by9GVjIwMTgucGRmTxEEFGJvb2sUBAAAAAAEEDAAAAAAAAAAAAAAAAAAAAAAAAAAAAAAAAAAAAAAAAAAAAAAAAQDAAAFAAAAAQEAAFVzZXJzAAAACAAAAAEBAABqbWZyZWl0YQkAAAABAQAARG9jdW1lbnRzAAAAEwAAAAEBAABCaWJsaW9ncmFmaWFBcnF1aXZvAAoAAAABAQAARlYyMDE4LnBkZgAAFAAAAAEGAAAEAAAAFAAAACQAAAA4AAAAVAAAAAgAAAAEAwAA0LwKAAAAAAAIAAAABAMAACdlCwAAAAAACAAAAAQDAABwcQwAAAAAAAgAAAAEAwAAPXQMAAAAAAAIAAAABAMAAL6pDAAAAAAAFAAAAAEGAACEAAAAlAAAAKQAAAC0AAAAxAAAAAgAAAAABAAAQcD3JGaAAAAYAAAAAQIAAAEAAAAAAAAADwAAAAAAAAAAAAAAAAAAAAgAAAAEAwAAAwAAAAAAAAAEAAAAAwMAAPUBAAAIAAAAAQkAAGZpbGU6Ly8vDAAAAAEBAABBdHJvbGFiaW8tSEQIAAAABAMAAADgAePoAAAACAAAAAAEAABBxyiEhoAAACQAAAABAQAANEY5ODJCREQtMDc4Qy00QTY0LUE1NDAtRTNBNDI0QTA5QUJDGAAAAAECAACBAAAAAQAAAO8TAAABAAAAAAAAAAAAAAABAAAAAQEAAC8AAAAAAAAAAQUAABoAAAABAQAATlNVUkxEb2N1bWVudElkZW50aWZpZXJLZXkAAAQAAAADAwAAxwQAAOoAAAABAgAAODk3NzcxMWJhOTEzZGFhNzY2ZDNlYWM0NDE5ODcwYmE4OTliOWI0MTg1ZDg2MmFkMDhmNjdiYTI5NjQ2ZTE1MTswMDswMDAwMDAwMDswMDAwMDAwMDswMDAwMDAwMDswMDAwMDAwMDAwMDAwMDIwO2NvbS5hcHBsZS5hcHAtc2FuZGJveC5yZWFkLXdyaXRlOzAxOzAxMDAwMDA0OzAwMDAwMDAwMDAwY2E5YmU7NWY7L3VzZXJzL2ptZnJlaXRhL2RvY3VtZW50cy9iaWJsaW9ncmFmaWFhcnF1aXZvL2Z2MjAxOC5wZGYAAADYAAAA/v///wEAAAAAAAAAEQAAAAQQAABoAAAAAAAAAAUQAADUAAAAAAAAABAQAAAAAQAAAAAAAEAQAADwAAAAAAAAAAIgAADMAQAAAAAAAAUgAAA8AQAAAAAAABAgAABMAQAAAAAAABEgAACAAQAAAAAAABIgAABgAQAAAAAAABMgAABwAQAAAAAAACAgAACsAQAAAAAAADAgAADYAQAAAAAAAAHAAAAgAQAAAAAAABHAAAAUAAAAAAAAABLAAAAwAQAAAAAAAIDwAAAQAgAAAAAAAOABAIAEAgAAAAAAAAAIAA0AGgAjAGAAAAAAAAACAQAAAAAAAAAFAAAAAAAAAAAAAAAAAAAEeA==},
	bdsk-url-1 = {https://doi.org/10.1063/1.5027386}}

@article{FPB23,
	author = {Faranda, Davide and Pascale, Salvatore and Bulut, Burak},
	date-added = {2024-05-25 15:52:02 +0100},
	date-modified = {2024-05-25 15:54:56 +0100},
	doi = {10.1088/1748-9326/acbc37},
	journal = {Environmental Research Letters},
	number = {3},
	pages = {034030},
	source = {Scopus},
	title = {Persistent anticyclonic conditions and climate change exacerbated the exceptional 2022 European-Mediterranean drought},
	type = {Article},
	url = {https://www.scopus.com/inward/record.uri?eid=2-s2.0-85149505840&doi=10.1088%2f1748-9326%2facbc37&partnerID=40&md5=ffe11361c4ea176ad382595951745772},
	volume = {18},
	year = {2023},
	bdsk-url-1 = {https://www.scopus.com/inward/record.uri?eid=2-s2.0-85149505840&doi=10.1088%2f1748-9326%2facbc37&partnerID=40&md5=ffe11361c4ea176ad382595951745772},
	bdsk-url-2 = {https://doi.org/10.1088/1748-9326/acbc37}}

@article{MF23,
	author = {Messori, Gabriele and Faranda, Davide},
	date-added = {2024-05-25 15:51:36 +0100},
	date-modified = {2024-05-25 15:57:50 +0100},
	doi = {10.1029/2022GL101008},
	journal = {Geophysical Research Letters},
	number = {7},
	pages = {e2022GL101008},
	source = {Scopus},
	title = {On the Systematic Occurrence of Compound Cold Spells in North America and Wet or Windy Extremes in Europe},
	type = {Article},
	url = {https://www.scopus.com/inward/record.uri?eid=2-s2.0-85153337616&doi=10.1029%2f2022GL101008&partnerID=40&md5=59e06e2b68c226f255c0199dce6c9839},
	volume = {50},
	year = {2023},
	bdsk-url-1 = {https://www.scopus.com/inward/record.uri?eid=2-s2.0-85153337616&doi=10.1029%2f2022GL101008&partnerID=40&md5=59e06e2b68c226f255c0199dce6c9839},
	bdsk-url-2 = {https://doi.org/10.1029/2022GL101008}}

@article{CFFH15,
	author = {Carvalho, Maria and Freitas, Ana Cristina Moreira and Freitas, Jorge Milhazes and Holland, Mark and Nicol, Matthew},
	date-added = {2024-05-17 22:14:35 +0100},
	date-modified = {2024-05-17 22:14:35 +0100},
	doi = {10.1080/14689367.2015.1056722},
	fjournal = {Dynamical Systems. An International Journal},
	issn = {1468-9367},
	journal = {Dyn. Syst.},
	mrclass = {37A50 (37B20 37C25 60G10 60G70)},
	mrnumber = {3430306},
	number = {4},
	pages = {383--403},
	title = {Extremal dichotomy for uniformly hyperbolic systems},
	url = {http://dx.doi.org/10.1080/14689367.2015.1056722},
	volume = {30},
	year = {2015},
	bdsk-file-1 = {YnBsaXN0MDDSAQIDBFxyZWxhdGl2ZVBhdGhYYm9va21hcmtfED0uLi8uLi8uLi8uLi8uLi8uLi9Eb2N1bWVudHMvQmlibGlvZ3JhZmlhQXJxdWl2by9DRkZITjIwMTUucGRmTxEEYGJvb2tgBAAAAAAEEDAAAAAAAAAAAAAAAAAAAAAAAAAAAAAAAAAAAAAAAAAAAAAAAEQDAAAFAAAAAQEAAFVzZXJzAAAACAAAAAEBAABqbWZyZWl0YQkAAAABAQAARG9jdW1lbnRzAAAAEwAAAAEBAABCaWJsaW9ncmFmaWFBcnF1aXZvAA0AAAABAQAAQ0ZGSE4yMDE1LnBkZgAAABQAAAABBgAABAAAABQAAAAkAAAAOAAAAFQAAAAIAAAABAMAANC8CgAAAAAACAAAAAQDAAAnZQsAAAAAAAgAAAAEAwAAcHEMAAAAAAAIAAAABAMAAD10DAAAAAAACAAAAAQDAAD0qAwAAAAAABQAAAABBgAAiAAAAJgAAACoAAAAuAAAAMgAAAAIAAAAAAQAAEG8at/+AAAAGAAAAAECAAABAAAAAAAAAA8AAAAAAAAAAAAAAAAAAAAIAAAABAMAAAMAAAAAAAAABAAAAAMDAAD1AQAACAAAAAEJAABmaWxlOi8vLwwAAAABAQAAQXRyb2xhYmlvLUhECAAAAAQDAAAA4AHj6AAAAAgAAAAABAAAQccohIaAAAAkAAAAAQEAADRGOTgyQkRELTA3OEMtNEE2NC1BNTQwLUUzQTQyNEEwOUFCQxgAAAABAgAAgQAAAAEAAADvEwAAAQAAAAAAAAAAAAAAAQAAAAEBAAAvAAAAAAAAAAEFAAAaAAAAAQEAAE5TVVJMRG9jdW1lbnRJZGVudGlmaWVyS2V5AAAEAAAAAwMAAAcEAADtAAAAAQIAAGEyZTA0MzUwZDM3MTIwNjk3ZGIzZTA3ZjEwMjg0Yzc4MDg0YjI2YjkxN2ViZWYzN2M1NDQxYjMxOTA5MDQwNTU7MDA7MDAwMDAwMDA7MDAwMDAwMDA7MDAwMDAwMDA7MDAwMDAwMDAwMDAwMDAyMDtjb20uYXBwbGUuYXBwLXNhbmRib3gucmVhZC13cml0ZTswMTswMTAwMDAwNDswMDAwMDAwMDAwMGNhOGY0OzVmOy91c2Vycy9qbWZyZWl0YS9kb2N1bWVudHMvYmlibGlvZ3JhZmlhYXJxdWl2by9jZmZobjIwMTUucGRmAAAAAC8AAAABAQAATlNVUkxCb29rbWFya1F1YXJhbnRpbmVNb3VudGVkTmV0d29ya1ZvbHVtZXNLZXkA5AAAAP7///8BAAAAAAAAABIAAAAEEAAAbAAAAAAAAAAFEAAA2AAAAAAAAAAQEAAABAEAAAAAAABAEAAA9AAAAAAAAAACIAAA0AEAAAAAAAAFIAAAQAEAAAAAAAAQIAAAUAEAAAAAAAARIAAAhAEAAAAAAAASIAAAZAEAAAAAAAATIAAAdAEAAAAAAAAgIAAAsAEAAAAAAAAwIAAA3AEAAAAAAAABwAAAJAEAAAAAAAARwAAAFAAAAAAAAAASwAAANAEAAAAAAACA8AAAFAIAAAAAAADkAQCACAIAAAAAAAAMAwCA3AEAAAAAAAAACAANABoAIwBjAAAAAAAAAgEAAAAAAAAABQAAAAAAAAAAAAAAAAAABMc=},
	bdsk-url-1 = {http://dx.doi.org/10.1080/14689367.2015.1056722},
	bdsk-url-2 = {http://arxiv.org/abs/1501.05023}}

@article{FFM18,
	arxiv = {arXiv:1507.00599},
	author = {Freitas, Ana Cristina Moreira and Freitas, Jorge Milhazes and Magalh\~aes, M\'ario},
	date-added = {2024-05-17 22:06:34 +0100},
	date-modified = {2024-05-17 22:06:34 +0100},
	doi = {10.4171/JEMS/808},
	fjournal = {Journal of the European Mathematical Society (JEMS)},
	issn = {1435-9855},
	journal = {J. Eur. Math. Soc. (JEMS)},
	mrclass = {60G55 (37A50 60B10 60G70)},
	mrnumber = {3836843},
	number = {9},
	pages = {2131--2179},
	title = {Convergence of marked point processes of excesses for dynamical systems},
	url = {https://doi.org/10.4171/JEMS/808},
	volume = {20},
	year = {2018},
	bdsk-file-1 = {YnBsaXN0MDDSAQIDBFxyZWxhdGl2ZVBhdGhYYm9va21hcmtfEDsuLi8uLi8uLi8uLi8uLi8uLi9Eb2N1bWVudHMvQmlibGlvZ3JhZmlhQXJxdWl2by9GRk0yMDE4LnBkZk8RBBRib29rFAQAAAAABBAwAAAAAAAAAAAAAAAAAAAAAAAAAAAAAAAAAAAAAAAAAAAAAAAEAwAABQAAAAEBAABVc2VycwAAAAgAAAABAQAAam1mcmVpdGEJAAAAAQEAAERvY3VtZW50cwAAABMAAAABAQAAQmlibGlvZ3JhZmlhQXJxdWl2bwALAAAAAQEAAEZGTTIwMTgucGRmABQAAAABBgAABAAAABQAAAAkAAAAOAAAAFQAAAAIAAAABAMAANC8CgAAAAAACAAAAAQDAAAnZQsAAAAAAAgAAAAEAwAAcHEMAAAAAAAIAAAABAMAAD10DAAAAAAACAAAAAQDAAC1qAwAAAAAABQAAAABBgAAhAAAAJQAAACkAAAAtAAAAMQAAAAIAAAAAAQAAEHAoe4ggAAAGAAAAAECAAABAAAAAAAAAA8AAAAAAAAAAAAAAAAAAAAIAAAABAMAAAMAAAAAAAAABAAAAAMDAAD1AQAACAAAAAEJAABmaWxlOi8vLwwAAAABAQAAQXRyb2xhYmlvLUhECAAAAAQDAAAA4AHj6AAAAAgAAAAABAAAQccohIaAAAAkAAAAAQEAADRGOTgyQkRELTA3OEMtNEE2NC1BNTQwLUUzQTQyNEEwOUFCQxgAAAABAgAAgQAAAAEAAADvEwAAAQAAAAAAAAAAAAAAAQAAAAEBAAAvAAAAAAAAAAEFAAAaAAAAAQEAAE5TVVJMRG9jdW1lbnRJZGVudGlmaWVyS2V5AAAEAAAAAwMAANkDAADrAAAAAQIAAGE1ZWIyMDcxODgyMTEzYzBlN2Q4MjRkMGIzYTIzYjMzOTZiNjkzNWI1ZWJkZDQ4MjIxNTRjMWI4NWFiNGM4ZDU7MDA7MDAwMDAwMDA7MDAwMDAwMDA7MDAwMDAwMDA7MDAwMDAwMDAwMDAwMDAyMDtjb20uYXBwbGUuYXBwLXNhbmRib3gucmVhZC13cml0ZTswMTswMTAwMDAwNDswMDAwMDAwMDAwMGNhOGI1OzVmOy91c2Vycy9qbWZyZWl0YS9kb2N1bWVudHMvYmlibGlvZ3JhZmlhYXJxdWl2by9mZm0yMDE4LnBkZgAA2AAAAP7///8BAAAAAAAAABEAAAAEEAAAaAAAAAAAAAAFEAAA1AAAAAAAAAAQEAAAAAEAAAAAAABAEAAA8AAAAAAAAAACIAAAzAEAAAAAAAAFIAAAPAEAAAAAAAAQIAAATAEAAAAAAAARIAAAgAEAAAAAAAASIAAAYAEAAAAAAAATIAAAcAEAAAAAAAAgIAAArAEAAAAAAAAwIAAA2AEAAAAAAAABwAAAIAEAAAAAAAARwAAAFAAAAAAAAAASwAAAMAEAAAAAAACA8AAAEAIAAAAAAADgAQCABAIAAAAAAAAACAANABoAIwBhAAAAAAAAAgEAAAAAAAAABQAAAAAAAAAAAAAAAAAABHk=},
	bdsk-url-1 = {https://doi.org/10.4171/JEMS/808},
	bdsk-url-2 = {https://arxiv.org/abs/1507.00599},
	bdsk-url-3 = {arXiv:1507.00599}}

@article{GHN11,
	author = {Gupta, Chinmaya and Holland, Mark and Nicol, Matthew},
	date-added = {2024-05-17 22:05:53 +0100},
	date-modified = {2024-05-17 22:05:53 +0100},
	doi = {10.1017/S014338571000057X},
	fjournal = {Ergodic Theory and Dynamical Systems},
	issn = {0143-3857},
	journal = {Ergodic Theory Dynam. Systems},
	mrclass = {37Dxx (60G70 62G32)},
	mrnumber = {2832250},
	number = {5},
	pages = {1363--1390},
	title = {Extreme value theory and return time statistics for dispersing billiard maps and flows, {L}ozi maps and {L}orenz-like maps},
	url = {http://dx.doi.org/10.1017/S014338571000057X},
	volume = {31},
	year = {2011},
	bdsk-file-1 = {YnBsaXN0MDDSAQIDBFxyZWxhdGl2ZVBhdGhYYm9va21hcmtfEDsuLi8uLi8uLi8uLi8uLi8uLi9Eb2N1bWVudHMvQmlibGlvZ3JhZmlhQXJxdWl2by9HSE4yMDExLnBkZk8RBBRib29rFAQAAAAABBAwAAAAAAAAAAAAAAAAAAAAAAAAAAAAAAAAAAAAAAAAAAAAAAAEAwAABQAAAAEBAABVc2VycwAAAAgAAAABAQAAam1mcmVpdGEJAAAAAQEAAERvY3VtZW50cwAAABMAAAABAQAAQmlibGlvZ3JhZmlhQXJxdWl2bwALAAAAAQEAAEdITjIwMTEucGRmABQAAAABBgAABAAAABQAAAAkAAAAOAAAAFQAAAAIAAAABAMAANC8CgAAAAAACAAAAAQDAAAnZQsAAAAAAAgAAAAEAwAAcHEMAAAAAAAIAAAABAMAAD10DAAAAAAACAAAAAQDAAAuqgwAAAAAABQAAAABBgAAhAAAAJQAAACkAAAAtAAAAMQAAAAIAAAAAAQAAEG1Cu4fAAAAGAAAAAECAAABAAAAAAAAAA8AAAAAAAAAAAAAAAAAAAAIAAAABAMAAAMAAAAAAAAABAAAAAMDAAD1AQAACAAAAAEJAABmaWxlOi8vLwwAAAABAQAAQXRyb2xhYmlvLUhECAAAAAQDAAAA4AHj6AAAAAgAAAAABAAAQccohIaAAAAkAAAAAQEAADRGOTgyQkRELTA3OEMtNEE2NC1BNTQwLUUzQTQyNEEwOUFCQxgAAAABAgAAgQAAAAEAAADvEwAAAQAAAAAAAAAAAAAAAQAAAAEBAAAvAAAAAAAAAAEFAAAaAAAAAQEAAE5TVVJMRG9jdW1lbnRJZGVudGlmaWVyS2V5AAAEAAAAAwMAADIFAADrAAAAAQIAADlhY2NmYjEwYmZkMzllMTJhMmU0NWU1MWM0OGYwYTJlZDgwMmZhZGI2ZDdhNjNkZmFkNWZiMjk3N2YwZWIyZTg7MDA7MDAwMDAwMDA7MDAwMDAwMDA7MDAwMDAwMDA7MDAwMDAwMDAwMDAwMDAyMDtjb20uYXBwbGUuYXBwLXNhbmRib3gucmVhZC13cml0ZTswMTswMTAwMDAwNDswMDAwMDAwMDAwMGNhYTJlOzVmOy91c2Vycy9qbWZyZWl0YS9kb2N1bWVudHMvYmlibGlvZ3JhZmlhYXJxdWl2by9naG4yMDExLnBkZgAA2AAAAP7///8BAAAAAAAAABEAAAAEEAAAaAAAAAAAAAAFEAAA1AAAAAAAAAAQEAAAAAEAAAAAAABAEAAA8AAAAAAAAAACIAAAzAEAAAAAAAAFIAAAPAEAAAAAAAAQIAAATAEAAAAAAAARIAAAgAEAAAAAAAASIAAAYAEAAAAAAAATIAAAcAEAAAAAAAAgIAAArAEAAAAAAAAwIAAA2AEAAAAAAAABwAAAIAEAAAAAAAARwAAAFAAAAAAAAAASwAAAMAEAAAAAAACA8AAAEAIAAAAAAADgAQCABAIAAAAAAAAACAANABoAIwBhAAAAAAAAAgEAAAAAAAAABQAAAAAAAAAAAAAAAAAABHk=},
	bdsk-url-1 = {http://dx.doi.org/10.1017/S014338571000057X}}

@article{F13,
	author = {Freitas, Jorge Milhazes},
	date-added = {2024-05-17 22:05:24 +0100},
	date-modified = {2024-05-17 22:05:24 +0100},
	doi = {10.1080/14689367.2013.806731},
	fjournal = {Dynamical Systems. An International Journal},
	issn = {1468-9367},
	journal = {Dyn. Syst.},
	mrclass = {37A50 (37A25 37B20 37C40 60G10 60G70)},
	mrnumber = {3170619},
	number = {3},
	pages = {302--332},
	title = {Extremal behaviour of chaotic dynamics},
	url = {http://dx.doi.org/10.1080/14689367.2013.806731},
	volume = {28},
	year = {2013},
	bdsk-file-1 = {YnBsaXN0MDDSAQIDBFxyZWxhdGl2ZVBhdGhYYm9va21hcmtfEDouLi8uLi8uLi8uLi8uLi8uLi9Eb2N1bWVudHMvQmlibGlvZ3JhZmlhQXJxdWl2by9GMjAxM2EucGRmTxEEFGJvb2sUBAAAAAAEEDAAAAAAAAAAAAAAAAAAAAAAAAAAAAAAAAAAAAAAAAAAAAAAAAQDAAAFAAAAAQEAAFVzZXJzAAAACAAAAAEBAABqbWZyZWl0YQkAAAABAQAARG9jdW1lbnRzAAAAEwAAAAEBAABCaWJsaW9ncmFmaWFBcnF1aXZvAAoAAAABAQAARjIwMTNhLnBkZgAAFAAAAAEGAAAEAAAAFAAAACQAAAA4AAAAVAAAAAgAAAAEAwAA0LwKAAAAAAAIAAAABAMAACdlCwAAAAAACAAAAAQDAABwcQwAAAAAAAgAAAAEAwAAPXQMAAAAAAAIAAAABAMAAE+pDAAAAAAAFAAAAAEGAACEAAAAlAAAAKQAAAC0AAAAxAAAAAgAAAAABAAAQbyTtY0AAAAYAAAAAQIAAAEAAAAAAAAADwAAAAAAAAAAAAAAAAAAAAgAAAAEAwAAAwAAAAAAAAAEAAAAAwMAAPUBAAAIAAAAAQkAAGZpbGU6Ly8vDAAAAAEBAABBdHJvbGFiaW8tSEQIAAAABAMAAADgAePoAAAACAAAAAAEAABBxyiEhoAAACQAAAABAQAANEY5ODJCREQtMDc4Qy00QTY0LUE1NDAtRTNBNDI0QTA5QUJDGAAAAAECAACBAAAAAQAAAO8TAAABAAAAAAAAAAAAAAABAAAAAQEAAC8AAAAAAAAAAQUAABoAAAABAQAATlNVUkxEb2N1bWVudElkZW50aWZpZXJLZXkAAAQAAAADAwAAWAQAAOoAAAABAgAANzg0MjI1YjNiMGU1MWZlYTExNTAxZDAzYTY4NDkzMzUyYzA3MmM4ZTI4OTQ0YTJlYzFhMDA1MDNkYmU5NTZiYzswMDswMDAwMDAwMDswMDAwMDAwMDswMDAwMDAwMDswMDAwMDAwMDAwMDAwMDIwO2NvbS5hcHBsZS5hcHAtc2FuZGJveC5yZWFkLXdyaXRlOzAxOzAxMDAwMDA0OzAwMDAwMDAwMDAwY2E5NGY7NWY7L3VzZXJzL2ptZnJlaXRhL2RvY3VtZW50cy9iaWJsaW9ncmFmaWFhcnF1aXZvL2YyMDEzYS5wZGYAAADYAAAA/v///wEAAAAAAAAAEQAAAAQQAABoAAAAAAAAAAUQAADUAAAAAAAAABAQAAAAAQAAAAAAAEAQAADwAAAAAAAAAAIgAADMAQAAAAAAAAUgAAA8AQAAAAAAABAgAABMAQAAAAAAABEgAACAAQAAAAAAABIgAABgAQAAAAAAABMgAABwAQAAAAAAACAgAACsAQAAAAAAADAgAADYAQAAAAAAAAHAAAAgAQAAAAAAABHAAAAUAAAAAAAAABLAAAAwAQAAAAAAAIDwAAAQAgAAAAAAAOABAIAEAgAAAAAAAAAIAA0AGgAjAGAAAAAAAAACAQAAAAAAAAAFAAAAAAAAAAAAAAAAAAAEeA==},
	bdsk-url-1 = {http://dx.doi.org/10.1080/14689367.2013.806731}}

@book{LFFF16,
	address = {Hoboken, NJ},
	arxiv = {https://arxiv.org/abs/1605.07006},
	author = {Lucarini, Valerio and Faranda, Davide and Freitas, Ana Cristina Moreira and Freitas, Jorge Milhazes and Holland, Mark and Kuna, Tobias and Nicol, Matthew and Todd, Mike and Vaienti, Sandro},
	date-added = {2024-05-17 22:05:16 +0100},
	date-modified = {2024-05-17 22:05:16 +0100},
	doi = {10.1002/9781118632321},
	isbn = {978-1-118-63219-2},
	publisher = {Wiley},
	series = {Pure and Applied Mathematics: A Wiley Series of Texts, Monographs and Tracts},
	title = {Extremes and Recurrence in Dynamical Systems},
	url = {http://eu.wiley.com/WileyCDA/WileyTitle/productCd-1118632192.html},
	year = {2016},
	bdsk-file-1 = {YnBsaXN0MDDSAQIDBFxyZWxhdGl2ZVBhdGhYYm9va21hcmtfED0uLi8uLi8uLi8uLi8uLi8uLi9Eb2N1bWVudHMvQmlibGlvZ3JhZmlhQXJxdWl2by9MRkZGSDIwMTYucGRmTxEEYGJvb2tgBAAAAAAEEDAAAAAAAAAAAAAAAAAAAAAAAAAAAAAAAAAAAAAAAAAAAAAAAEQDAAAFAAAAAQEAAFVzZXJzAAAACAAAAAEBAABqbWZyZWl0YQkAAAABAQAARG9jdW1lbnRzAAAAEwAAAAEBAABCaWJsaW9ncmFmaWFBcnF1aXZvAA0AAAABAQAATEZGRkgyMDE2LnBkZgAAABQAAAABBgAABAAAABQAAAAkAAAAOAAAAFQAAAAIAAAABAMAANC8CgAAAAAACAAAAAQDAAAnZQsAAAAAAAgAAAAEAwAAcHEMAAAAAAAIAAAABAMAAD10DAAAAAAACAAAAAQDAAAbqQwAAAAAABQAAAABBgAAiAAAAJgAAACoAAAAuAAAAMgAAAAIAAAAAAQAAEG9APSLAAAAGAAAAAECAAABAAAAAAAAAA8AAAAAAAAAAAAAAAAAAAAIAAAABAMAAAMAAAAAAAAABAAAAAMDAAD1AQAACAAAAAEJAABmaWxlOi8vLwwAAAABAQAAQXRyb2xhYmlvLUhECAAAAAQDAAAA4AHj6AAAAAgAAAAABAAAQccohIaAAAAkAAAAAQEAADRGOTgyQkRELTA3OEMtNEE2NC1BNTQwLUUzQTQyNEEwOUFCQxgAAAABAgAAgQAAAAEAAADvEwAAAQAAAAAAAAAAAAAAAQAAAAEBAAAvAAAAAAAAAAEFAAAaAAAAAQEAAE5TVVJMRG9jdW1lbnRJZGVudGlmaWVyS2V5AAAEAAAAAwMAACUEAADtAAAAAQIAADMyNjAyOGEwMDUwYjA1ZDJmZDA0MDkyNTk3MThkNDk3ZmI0ODZlNGE5MTg5YjZiOTkwOTM5YzUyYTU5MmMxMDc7MDA7MDAwMDAwMDA7MDAwMDAwMDA7MDAwMDAwMDA7MDAwMDAwMDAwMDAwMDAyMDtjb20uYXBwbGUuYXBwLXNhbmRib3gucmVhZC13cml0ZTswMTswMTAwMDAwNDswMDAwMDAwMDAwMGNhOTFiOzVmOy91c2Vycy9qbWZyZWl0YS9kb2N1bWVudHMvYmlibGlvZ3JhZmlhYXJxdWl2by9sZmZmaDIwMTYucGRmAAAAAC8AAAABAQAATlNVUkxCb29rbWFya1F1YXJhbnRpbmVNb3VudGVkTmV0d29ya1ZvbHVtZXNLZXkA5AAAAP7///8BAAAAAAAAABIAAAAEEAAAbAAAAAAAAAAFEAAA2AAAAAAAAAAQEAAABAEAAAAAAABAEAAA9AAAAAAAAAACIAAA0AEAAAAAAAAFIAAAQAEAAAAAAAAQIAAAUAEAAAAAAAARIAAAhAEAAAAAAAASIAAAZAEAAAAAAAATIAAAdAEAAAAAAAAgIAAAsAEAAAAAAAAwIAAA3AEAAAAAAAABwAAAJAEAAAAAAAARwAAAFAAAAAAAAAASwAAANAEAAAAAAACA8AAAFAIAAAAAAADkAQCACAIAAAAAAAAMAwCA3AEAAAAAAAAACAANABoAIwBjAAAAAAAAAgEAAAAAAAAABQAAAAAAAAAAAAAAAAAABMc=},
	bdsk-file-2 = {YnBsaXN0MDDSAQIDBFxyZWxhdGl2ZVBhdGhYYm9va21hcmtfED4uLi8uLi8uLi8uLi8uLi8uLi9Eb2N1bWVudHMvQmlibGlvZ3JhZmlhQXJxdWl2by9MRkZGSDIwMTZhLnBkZk8RBBxib29rHAQAAAAABBAwAAAAAAAAAAAAAAAAAAAAAAAAAAAAAAAAAAAAAAAAAAAAAAAMAwAABQAAAAEBAABVc2VycwAAAAgAAAABAQAAam1mcmVpdGEJAAAAAQEAAERvY3VtZW50cwAAABMAAAABAQAAQmlibGlvZ3JhZmlhQXJxdWl2bwAOAAAAAQEAAExGRkZIMjAxNmEucGRmAAAUAAAAAQYAAAQAAAAUAAAAJAAAADgAAABUAAAACAAAAAQDAADQvAoAAAAAAAgAAAAEAwAAJ2ULAAAAAAAIAAAABAMAAHBxDAAAAAAACAAAAAQDAAA9dAwAAAAAAAgAAAAEAwAAdqcMAAAAAAAUAAAAAQYAAIgAAACYAAAAqAAAALgAAADIAAAACAAAAAAEAABBvQD1dAAAABgAAAABAgAAAQAAAAAAAAAPAAAAAAAAAAAAAAAAAAAACAAAAAQDAAADAAAAAAAAAAQAAAADAwAA9QEAAAgAAAABCQAAZmlsZTovLy8MAAAAAQEAAEF0cm9sYWJpby1IRAgAAAAEAwAAAOAB4+gAAAAIAAAAAAQAAEHHKISGgAAAJAAAAAEBAAA0Rjk4MkJERC0wNzhDLTRBNjQtQTU0MC1FM0E0MjRBMDlBQkMYAAAAAQIAAIEAAAABAAAA7xMAAAEAAAAAAAAAAAAAAAEAAAABAQAALwAAAAAAAAABBQAAGgAAAAEBAABOU1VSTERvY3VtZW50SWRlbnRpZmllcktleQAABAAAAAMDAAD1AgAA7gAAAAECAAAyMjdlOTIwODI1ZGY1Y2JhMDQ4NTU1M2QxYTJlNWNlNzdhYTE5NzQ2ZTQwYmRlNDI1ZTQzNWJkMjEyMjliZDAzOzAwOzAwMDAwMDAwOzAwMDAwMDAwOzAwMDAwMDAwOzAwMDAwMDAwMDAwMDAwMjA7Y29tLmFwcGxlLmFwcC1zYW5kYm94LnJlYWQtd3JpdGU7MDE7MDEwMDAwMDQ7MDAwMDAwMDAwMDBjYTc3Njs1ZjsvdXNlcnMvam1mcmVpdGEvZG9jdW1lbnRzL2JpYmxpb2dyYWZpYWFycXVpdm8vbGZmZmgyMDE2YS5wZGYAAADYAAAA/v///wEAAAAAAAAAEQAAAAQQAABsAAAAAAAAAAUQAADYAAAAAAAAABAQAAAEAQAAAAAAAEAQAAD0AAAAAAAAAAIgAADQAQAAAAAAAAUgAABAAQAAAAAAABAgAABQAQAAAAAAABEgAACEAQAAAAAAABIgAABkAQAAAAAAABMgAAB0AQAAAAAAACAgAACwAQAAAAAAADAgAADcAQAAAAAAAAHAAAAkAQAAAAAAABHAAAAUAAAAAAAAABLAAAA0AQAAAAAAAIDwAAAUAgAAAAAAAOQBAIAIAgAAAAAAAAAIAA0AGgAjAGQAAAAAAAACAQAAAAAAAAAFAAAAAAAAAAAAAAAAAAAEhA==},
	bdsk-file-3 = {YnBsaXN0MDDSAQIDBFxyZWxhdGl2ZVBhdGhYYm9va21hcmtfED4uLi8uLi8uLi8uLi8uLi8uLi9Eb2N1bWVudHMvQmlibGlvZ3JhZmlhQXJxdWl2by9MRkZGSDIwMTZiLnBkZk8RBBxib29rHAQAAAAABBAwAAAAAAAAAAAAAAAAAAAAAAAAAAAAAAAAAAAAAAAAAAAAAAAMAwAABQAAAAEBAABVc2VycwAAAAgAAAABAQAAam1mcmVpdGEJAAAAAQEAAERvY3VtZW50cwAAABMAAAABAQAAQmlibGlvZ3JhZmlhQXJxdWl2bwAOAAAAAQEAAExGRkZIMjAxNmIucGRmAAAUAAAAAQYAAAQAAAAUAAAAJAAAADgAAABUAAAACAAAAAQDAADQvAoAAAAAAAgAAAAEAwAAJ2ULAAAAAAAIAAAABAMAAHBxDAAAAAAACAAAAAQDAAA9dAwAAAAAAAgAAAAEAwAA/akMAAAAAAAUAAAAAQYAAIgAAACYAAAAqAAAALgAAADIAAAACAAAAAAEAABBvQD1EQAAABgAAAABAgAAAQAAAAAAAAAPAAAAAAAAAAAAAAAAAAAACAAAAAQDAAADAAAAAAAAAAQAAAADAwAA9QEAAAgAAAABCQAAZmlsZTovLy8MAAAAAQEAAEF0cm9sYWJpby1IRAgAAAAEAwAAAOAB4+gAAAAIAAAAAAQAAEHHKISGgAAAJAAAAAEBAAA0Rjk4MkJERC0wNzhDLTRBNjQtQTU0MC1FM0E0MjRBMDlBQkMYAAAAAQIAAIEAAAABAAAA7xMAAAEAAAAAAAAAAAAAAAEAAAABAQAALwAAAAAAAAABBQAAGgAAAAEBAABOU1VSTERvY3VtZW50SWRlbnRpZmllcktleQAABAAAAAMDAAADBQAA7gAAAAECAAA5ODJkMGY5YmNhZDAxNGY1YTBkYzZhY2JjYTg4NjZiNGFlOGRhZjVmZDI2M2M3OTUwODE0MGExMGE2NzA3NjU3OzAwOzAwMDAwMDAwOzAwMDAwMDAwOzAwMDAwMDAwOzAwMDAwMDAwMDAwMDAwMjA7Y29tLmFwcGxlLmFwcC1zYW5kYm94LnJlYWQtd3JpdGU7MDE7MDEwMDAwMDQ7MDAwMDAwMDAwMDBjYTlmZDs1ZjsvdXNlcnMvam1mcmVpdGEvZG9jdW1lbnRzL2JpYmxpb2dyYWZpYWFycXVpdm8vbGZmZmgyMDE2Yi5wZGYAAADYAAAA/v///wEAAAAAAAAAEQAAAAQQAABsAAAAAAAAAAUQAADYAAAAAAAAABAQAAAEAQAAAAAAAEAQAAD0AAAAAAAAAAIgAADQAQAAAAAAAAUgAABAAQAAAAAAABAgAABQAQAAAAAAABEgAACEAQAAAAAAABIgAABkAQAAAAAAABMgAAB0AQAAAAAAACAgAACwAQAAAAAAADAgAADcAQAAAAAAAAHAAAAkAQAAAAAAABHAAAAUAAAAAAAAABLAAAA0AQAAAAAAAIDwAAAUAgAAAAAAAOQBAIAIAgAAAAAAAAAIAA0AGgAjAGQAAAAAAAACAQAAAAAAAAAFAAAAAAAAAAAAAAAAAAAEhA==},
	bdsk-url-1 = {http://eu.wiley.com/WileyCDA/WileyTitle/productCd-1118632192.html},
	bdsk-url-2 = {https://arxiv.org/abs/1605.07006},
	bdsk-url-3 = {https://doi.org/10.1002/9781118632321}}

@article{FFM20,
	arxiv = {https://arxiv.org/abs/1707.07071},
	author = {Freitas, Ana Cristina Moreira and Freitas, Jorge Milhazes and Magalh\~{a}es, M\'{a}rio},
	date-added = {2024-05-17 22:04:58 +0100},
	date-modified = {2024-05-17 22:04:58 +0100},
	doi = {10.1090/tran/7922},
	fjournal = {Transactions of the American Mathematical Society},
	issn = {0002-9947},
	journal = {Trans. Amer. Math. Soc.},
	mrclass = {37A50 (37A25 37B20 60G55 60G70)},
	mrnumber = {4042881},
	number = {1},
	pages = {435--478},
	title = {Complete convergence and records for dynamically generated stochastic processes},
	url = {https://doi.org/10.1090/tran/7922},
	volume = {373},
	year = {2020},
	bdsk-file-1 = {YnBsaXN0MDDSAQIDBFxyZWxhdGl2ZVBhdGhYYm9va21hcmtfEDsuLi8uLi8uLi8uLi8uLi8uLi9Eb2N1bWVudHMvQmlibGlvZ3JhZmlhQXJxdWl2by9GRk0yMDIwLnBkZk8RBFhib29rWAQAAAAABBAwAAAAAAAAAAAAAAAAAAAAAAAAAAAAAAAAAAAAAAAAAAAAAAA8AwAABQAAAAEBAABVc2VycwAAAAgAAAABAQAAam1mcmVpdGEJAAAAAQEAAERvY3VtZW50cwAAABMAAAABAQAAQmlibGlvZ3JhZmlhQXJxdWl2bwALAAAAAQEAAEZGTTIwMjAucGRmABQAAAABBgAABAAAABQAAAAkAAAAOAAAAFQAAAAIAAAABAMAANC8CgAAAAAACAAAAAQDAAAnZQsAAAAAAAgAAAAEAwAAcHEMAAAAAAAIAAAABAMAAD10DAAAAAAACAAAAAQDAACjqQwAAAAAABQAAAABBgAAhAAAAJQAAACkAAAAtAAAAMQAAAAIAAAAAAQAAEHB3es4AAAAGAAAAAECAAABAAAAAAAAAA8AAAAAAAAAAAAAAAAAAAAIAAAABAMAAAMAAAAAAAAABAAAAAMDAAD1AQAACAAAAAEJAABmaWxlOi8vLwwAAAABAQAAQXRyb2xhYmlvLUhECAAAAAQDAAAA4AHj6AAAAAgAAAAABAAAQccohIaAAAAkAAAAAQEAADRGOTgyQkRELTA3OEMtNEE2NC1BNTQwLUUzQTQyNEEwOUFCQxgAAAABAgAAgQAAAAEAAADvEwAAAQAAAAAAAAAAAAAAAQAAAAEBAAAvAAAAAAAAAAEFAAAaAAAAAQEAAE5TVVJMRG9jdW1lbnRJZGVudGlmaWVyS2V5AAAEAAAAAwMAAKwEAADrAAAAAQIAADNjOTBiZTRkM2Y0MjRjNTlmZDZiMTZjZDEwYzY4NGE1NDk4ZjZjODkwNTUyZWQ5YTIxYzFiNjg2MGFkNDk3ZTc7MDA7MDAwMDAwMDA7MDAwMDAwMDA7MDAwMDAwMDA7MDAwMDAwMDAwMDAwMDAyMDtjb20uYXBwbGUuYXBwLXNhbmRib3gucmVhZC13cml0ZTswMTswMTAwMDAwNDswMDAwMDAwMDAwMGNhOWEzOzVmOy91c2Vycy9qbWZyZWl0YS9kb2N1bWVudHMvYmlibGlvZ3JhZmlhYXJxdWl2by9mZm0yMDIwLnBkZgAALwAAAAEBAABOU1VSTEJvb2ttYXJrUXVhcmFudGluZU1vdW50ZWROZXR3b3JrVm9sdW1lc0tleQDkAAAA/v///wEAAAAAAAAAEgAAAAQQAABoAAAAAAAAAAUQAADUAAAAAAAAABAQAAAAAQAAAAAAAEAQAADwAAAAAAAAAAIgAADMAQAAAAAAAAUgAAA8AQAAAAAAABAgAABMAQAAAAAAABEgAACAAQAAAAAAABIgAABgAQAAAAAAABMgAABwAQAAAAAAACAgAACsAQAAAAAAADAgAADYAQAAAAAAAAHAAAAgAQAAAAAAABHAAAAUAAAAAAAAABLAAAAwAQAAAAAAAIDwAAAQAgAAAAAAAOABAIAEAgAAAAAAAAQDAIDYAQAAAAAAAAAIAA0AGgAjAGEAAAAAAAACAQAAAAAAAAAFAAAAAAAAAAAAAAAAAAAEvQ==},
	bdsk-url-1 = {https://doi.org/10.1090/tran/7922},
	bdsk-url-2 = {https://arxiv.org/abs/1707.07071}}

@article{HV20,
	author = {Haydn, Nicolai and Vaienti, Sandro},
	date-added = {2024-05-06 20:21:45 +0100},
	date-modified = {2024-05-06 20:21:45 +0100},
	doi = {10.1007/s00220-020-03795-0},
	fjournal = {Communications in Mathematical Physics},
	issn = {0010-3616},
	journal = {Comm. Math. Phys.},
	mrclass = {37A50 (37D10 60B10 60E05)},
	mrnumber = {4124984},
	mrreviewer = {Xu Zhang},
	number = {1},
	pages = {149--184},
	title = {Limiting entry and return times distribution for arbitrary null sets},
	url = {https://doi.org/10.1007/s00220-020-03795-0},
	volume = {378},
	year = {2020},
	bdsk-file-1 = {YnBsaXN0MDDSAQIDBFxyZWxhdGl2ZVBhdGhYYm9va21hcmtfEDouLi8uLi8uLi8uLi8uLi8uLi9Eb2N1bWVudHMvQmlibGlvZ3JhZmlhQXJxdWl2by9IVjIwMjAucGRmTxEEWGJvb2tYBAAAAAAEEDAAAAAAAAAAAAAAAAAAAAAAAAAAAAAAAAAAAAAAAAAAAAAAADwDAAAFAAAAAQEAAFVzZXJzAAAACAAAAAEBAABqbWZyZWl0YQkAAAABAQAARG9jdW1lbnRzAAAAEwAAAAEBAABCaWJsaW9ncmFmaWFBcnF1aXZvAAoAAAABAQAASFYyMDIwLnBkZgAAFAAAAAEGAAAEAAAAFAAAACQAAAA4AAAAVAAAAAgAAAAEAwAA0LwKAAAAAAAIAAAABAMAACdlCwAAAAAACAAAAAQDAABwcQwAAAAAAAgAAAAEAwAAPXQMAAAAAAAIAAAABAMAADapDAAAAAAAFAAAAAEGAACEAAAAlAAAAKQAAAC0AAAAxAAAAAgAAAAABAAAQcUyI7eAAAAYAAAAAQIAAAEAAAAAAAAADwAAAAAAAAAAAAAAAAAAAAgAAAAEAwAAAwAAAAAAAAAEAAAAAwMAAPUBAAAIAAAAAQkAAGZpbGU6Ly8vDAAAAAEBAABBdHJvbGFiaW8tSEQIAAAABAMAAADgAePoAAAACAAAAAAEAABBxyiEhoAAACQAAAABAQAANEY5ODJCREQtMDc4Qy00QTY0LUE1NDAtRTNBNDI0QTA5QUJDGAAAAAECAACBAAAAAQAAAO8TAAABAAAAAAAAAAAAAAABAAAAAQEAAC8AAAAAAAAAAQUAABoAAAABAQAATlNVUkxEb2N1bWVudElkZW50aWZpZXJLZXkAAAQAAAADAwAAPwQAAOoAAAABAgAAYTBhNTQ1ZWIxMjI1NzVhNzk2N2E2YTAxNGM1Mjc0MDcxZTQ1Mzc5ZjYzNzFhMzk3NGFlZWU5NmMxZDcwODE3MDswMDswMDAwMDAwMDswMDAwMDAwMDswMDAwMDAwMDswMDAwMDAwMDAwMDAwMDIwO2NvbS5hcHBsZS5hcHAtc2FuZGJveC5yZWFkLXdyaXRlOzAxOzAxMDAwMDA0OzAwMDAwMDAwMDAwY2E5MzY7NWY7L3VzZXJzL2ptZnJlaXRhL2RvY3VtZW50cy9iaWJsaW9ncmFmaWFhcnF1aXZvL2h2MjAyMC5wZGYAAAAvAAAAAQEAAE5TVVJMQm9va21hcmtRdWFyYW50aW5lTW91bnRlZE5ldHdvcmtWb2x1bWVzS2V5AOQAAAD+////AQAAAAAAAAASAAAABBAAAGgAAAAAAAAABRAAANQAAAAAAAAAEBAAAAABAAAAAAAAQBAAAPAAAAAAAAAAAiAAAMwBAAAAAAAABSAAADwBAAAAAAAAECAAAEwBAAAAAAAAESAAAIABAAAAAAAAEiAAAGABAAAAAAAAEyAAAHABAAAAAAAAICAAAKwBAAAAAAAAMCAAANgBAAAAAAAAAcAAACABAAAAAAAAEcAAABQAAAAAAAAAEsAAADABAAAAAAAAgPAAABACAAAAAAAA4AEAgAQCAAAAAAAABAMAgNgBAAAAAAAAAAgADQAaACMAYAAAAAAAAAIBAAAAAAAAAAUAAAAAAAAAAAAAAAAAAAS8},
	bdsk-url-1 = {https://doi.org/10.1007/s00220-020-03795-0}}

@book{LLR83,
	address = {New York},
	author = {Leadbetter, M. R. and Lindgren, Georg and Rootz{{\'e}}n, Holger},
	date-added = {2024-05-03 02:10:03 +0100},
	date-modified = {2024-05-25 15:59:10 +0100},
	isbn = {0-387-90731-9},
	mrclass = {60F05 (60G15 62G30)},
	mrnumber = {691492 (84h:60050)},
	mrreviewer = {Simeon M. Berman},
	pages = {xii+336},
	publisher = {Springer-Verlag},
	series = {Springer Series in Statistics},
	title = {Extremes and related properties of random sequences and processes},
	year = {1983},
	bdsk-file-1 = {YnBsaXN0MDDSAQIDBFxyZWxhdGl2ZVBhdGhYYm9va21hcmtfEDsuLi8uLi8uLi8uLi8uLi8uLi9Eb2N1bWVudHMvQmlibGlvZ3JhZmlhQXJxdWl2by9MTFIxOTgzLnBkZk8RBBRib29rFAQAAAAABBAwAAAAAAAAAAAAAAAAAAAAAAAAAAAAAAAAAAAAAAAAAAAAAAAEAwAABQAAAAEBAABVc2VycwAAAAgAAAABAQAAam1mcmVpdGEJAAAAAQEAAERvY3VtZW50cwAAABMAAAABAQAAQmlibGlvZ3JhZmlhQXJxdWl2bwALAAAAAQEAAExMUjE5ODMucGRmABQAAAABBgAABAAAABQAAAAkAAAAOAAAAFQAAAAIAAAABAMAANC8CgAAAAAACAAAAAQDAAAnZQsAAAAAAAgAAAAEAwAAcHEMAAAAAAAIAAAABAMAAD10DAAAAAAACAAAAAQDAAClqAwAAAAAABQAAAABBgAAhAAAAJQAAACkAAAAtAAAAMQAAAAIAAAAAAQAAEG1xaDGAAAAGAAAAAECAAABAAAAAAAAAA8AAAAAAAAAAAAAAAAAAAAIAAAABAMAAAMAAAAAAAAABAAAAAMDAAD1AQAACAAAAAEJAABmaWxlOi8vLwwAAAABAQAAQXRyb2xhYmlvLUhECAAAAAQDAAAA4AHj6AAAAAgAAAAABAAAQccohIaAAAAkAAAAAQEAADRGOTgyQkRELTA3OEMtNEE2NC1BNTQwLUUzQTQyNEEwOUFCQxgAAAABAgAAgQAAAAEAAADvEwAAAQAAAAAAAAAAAAAAAQAAAAEBAAAvAAAAAAAAAAEFAAAaAAAAAQEAAE5TVVJMRG9jdW1lbnRJZGVudGlmaWVyS2V5AAAEAAAAAwMAAMwDAADrAAAAAQIAADk3NTBjODUyN2ZlZjYwZTFmYTcwZjFkYWVkNjVlZTg5NDE1NTIyYjc2N2M1YWRlYzQ5Y2U5OGE4ZWNlZWY4ZjU7MDA7MDAwMDAwMDA7MDAwMDAwMDA7MDAwMDAwMDA7MDAwMDAwMDAwMDAwMDAyMDtjb20uYXBwbGUuYXBwLXNhbmRib3gucmVhZC13cml0ZTswMTswMTAwMDAwNDswMDAwMDAwMDAwMGNhOGE1OzVmOy91c2Vycy9qbWZyZWl0YS9kb2N1bWVudHMvYmlibGlvZ3JhZmlhYXJxdWl2by9sbHIxOTgzLnBkZgAA2AAAAP7///8BAAAAAAAAABEAAAAEEAAAaAAAAAAAAAAFEAAA1AAAAAAAAAAQEAAAAAEAAAAAAABAEAAA8AAAAAAAAAACIAAAzAEAAAAAAAAFIAAAPAEAAAAAAAAQIAAATAEAAAAAAAARIAAAgAEAAAAAAAASIAAAYAEAAAAAAAATIAAAcAEAAAAAAAAgIAAArAEAAAAAAAAwIAAA2AEAAAAAAAABwAAAIAEAAAAAAAARwAAAFAAAAAAAAAASwAAAMAEAAAAAAACA8AAAEAIAAAAAAADgAQCABAIAAAAAAAAACAANABoAIwBhAAAAAAAAAgEAAAAAAAAABQAAAAAAAAAAAAAAAAAABHk=},
	bdsk-file-2 = {YnBsaXN0MDDSAQIDBFxyZWxhdGl2ZVBhdGhYYm9va21hcmtfEDwuLi8uLi8uLi8uLi8uLi8uLi9Eb2N1bWVudHMvQmlibGlvZ3JhZmlhQXJxdWl2by9MTFIxOTgzYS5wZGZPEQRYYm9va1gEAAAAAAQQMAAAAAAAAAAAAAAAAAAAAAAAAAAAAAAAAAAAAAAAAAAAAAAAPAMAAAUAAAABAQAAVXNlcnMAAAAIAAAAAQEAAGptZnJlaXRhCQAAAAEBAABEb2N1bWVudHMAAAATAAAAAQEAAEJpYmxpb2dyYWZpYUFycXVpdm8ADAAAAAEBAABMTFIxOTgzYS5wZGYUAAAAAQYAAAQAAAAUAAAAJAAAADgAAABUAAAACAAAAAQDAADQvAoAAAAAAAgAAAAEAwAAJ2ULAAAAAAAIAAAABAMAAHBxDAAAAAAACAAAAAQDAAA9dAwAAAAAAAgAAAAEAwAAgqgMAAAAAAAUAAAAAQYAAIQAAACUAAAApAAAALQAAADEAAAACAAAAAAEAABBwy9fEwAAABgAAAABAgAAAQAAAAAAAAAPAAAAAAAAAAAAAAAAAAAACAAAAAQDAAADAAAAAAAAAAQAAAADAwAA9QEAAAgAAAABCQAAZmlsZTovLy8MAAAAAQEAAEF0cm9sYWJpby1IRAgAAAAEAwAAAOAB4+gAAAAIAAAAAAQAAEHHKISGgAAAJAAAAAEBAAA0Rjk4MkJERC0wNzhDLTRBNjQtQTU0MC1FM0E0MjRBMDlBQkMYAAAAAQIAAIEAAAABAAAA7xMAAAEAAAAAAAAAAAAAAAEAAAABAQAALwAAAAAAAAABBQAAGgAAAAEBAABOU1VSTERvY3VtZW50SWRlbnRpZmllcktleQAABAAAAAMDAACvAwAA7AAAAAECAAA4NmRiYjZjYmRiMWM5NDVkYjAxNzI1ZjQ4MTllNTA4YmY5MDJmNjczYmI1ZjQyNmExODIyMzg2MjM3MGViNGVkOzAwOzAwMDAwMDAwOzAwMDAwMDAwOzAwMDAwMDAwOzAwMDAwMDAwMDAwMDAwMjA7Y29tLmFwcGxlLmFwcC1zYW5kYm94LnJlYWQtd3JpdGU7MDE7MDEwMDAwMDQ7MDAwMDAwMDAwMDBjYTg4Mjs1ZjsvdXNlcnMvam1mcmVpdGEvZG9jdW1lbnRzL2JpYmxpb2dyYWZpYWFycXVpdm8vbGxyMTk4M2EucGRmAC8AAAABAQAATlNVUkxCb29rbWFya1F1YXJhbnRpbmVNb3VudGVkTmV0d29ya1ZvbHVtZXNLZXkA5AAAAP7///8BAAAAAAAAABIAAAAEEAAAaAAAAAAAAAAFEAAA1AAAAAAAAAAQEAAAAAEAAAAAAABAEAAA8AAAAAAAAAACIAAAzAEAAAAAAAAFIAAAPAEAAAAAAAAQIAAATAEAAAAAAAARIAAAgAEAAAAAAAASIAAAYAEAAAAAAAATIAAAcAEAAAAAAAAgIAAArAEAAAAAAAAwIAAA2AEAAAAAAAABwAAAIAEAAAAAAAARwAAAFAAAAAAAAAASwAAAMAEAAAAAAACA8AAAEAIAAAAAAADgAQCABAIAAAAAAAAEAwCA2AEAAAAAAAAACAANABoAIwBiAAAAAAAAAgEAAAAAAAAABQAAAAAAAAAAAAAAAAAABL4=},
	bdsk-file-3 = {YnBsaXN0MDDSAQIDBFxyZWxhdGl2ZVBhdGhYYm9va21hcmtfEDwuLi8uLi8uLi8uLi8uLi8uLi9Eb2N1bWVudHMvQmlibGlvZ3JhZmlhQXJxdWl2by9MTFIxOTgzLmRqdnVPEQQUYm9vaxQEAAAAAAQQMAAAAAAAAAAAAAAAAAAAAAAAAAAAAAAAAAAAAAAAAAAAAAAABAMAAAUAAAABAQAAVXNlcnMAAAAIAAAAAQEAAGptZnJlaXRhCQAAAAEBAABEb2N1bWVudHMAAAATAAAAAQEAAEJpYmxpb2dyYWZpYUFycXVpdm8ADAAAAAEBAABMTFIxOTgzLmRqdnUUAAAAAQYAAAQAAAAUAAAAJAAAADgAAABUAAAACAAAAAQDAADQvAoAAAAAAAgAAAAEAwAAJ2ULAAAAAAAIAAAABAMAAHBxDAAAAAAACAAAAAQDAAA9dAwAAAAAAAgAAAAEAwAAUakMAAAAAAAUAAAAAQYAAIQAAACUAAAApAAAALQAAADEAAAACAAAAAAEAABBttlJ0gAAABgAAAABAgAAAQAAAAAAAAAPAAAAAAAAAAAAAAAAAAAACAAAAAQDAAADAAAAAAAAAAQAAAADAwAA9QEAAAgAAAABCQAAZmlsZTovLy8MAAAAAQEAAEF0cm9sYWJpby1IRAgAAAAEAwAAAOAB4+gAAAAIAAAAAAQAAEHHKISGgAAAJAAAAAEBAAA0Rjk4MkJERC0wNzhDLTRBNjQtQTU0MC1FM0E0MjRBMDlBQkMYAAAAAQIAAIEAAAABAAAA7xMAAAEAAAAAAAAAAAAAAAEAAAABAQAALwAAAAAAAAABBQAAGgAAAAEBAABOU1VSTERvY3VtZW50SWRlbnRpZmllcktleQAABAAAAAMDAABaBAAA7AAAAAECAAAxYmJmN2M1MDZiNTQ0MWRlODQ1N2U1NzI5ZGZjMGM3YmI1Y2ViOTAzZTU0Mzg2NzU2NWFkODIzYmVhYmUyYzQzOzAwOzAwMDAwMDAwOzAwMDAwMDAwOzAwMDAwMDAwOzAwMDAwMDAwMDAwMDAwMjA7Y29tLmFwcGxlLmFwcC1zYW5kYm94LnJlYWQtd3JpdGU7MDE7MDEwMDAwMDQ7MDAwMDAwMDAwMDBjYTk1MTs1ZjsvdXNlcnMvam1mcmVpdGEvZG9jdW1lbnRzL2JpYmxpb2dyYWZpYWFycXVpdm8vbGxyMTk4My5kanZ1ANgAAAD+////AQAAAAAAAAARAAAABBAAAGgAAAAAAAAABRAAANQAAAAAAAAAEBAAAAABAAAAAAAAQBAAAPAAAAAAAAAAAiAAAMwBAAAAAAAABSAAADwBAAAAAAAAECAAAEwBAAAAAAAAESAAAIABAAAAAAAAEiAAAGABAAAAAAAAEyAAAHABAAAAAAAAICAAAKwBAAAAAAAAMCAAANgBAAAAAAAAAcAAACABAAAAAAAAEcAAABQAAAAAAAAAEsAAADABAAAAAAAAgPAAABACAAAAAAAA4AEAgAQCAAAAAAAAAAgADQAaACMAYgAAAAAAAAIBAAAAAAAAAAUAAAAAAAAAAAAAAAAAAAR6},
	bdsk-url-1 = {http://ams.org/mathscinet-getitem?mr=691492}}

@article{CHM91,
	author = {Chernick, Michael R. and Hsing, Tailen and McCormick, William P.},
	coden = {AAPBBD},
	date-added = {2024-04-25 17:44:57 +0100},
	date-modified = {2024-04-25 17:44:57 +0100},
	doi = {10.2307/1427679},
	fjournal = {Advances in Applied Probability},
	issn = {0001-8678},
	journal = {Adv. in Appl. Probab.},
	mrclass = {60G70},
	mrnumber = {MR1133731 (93c:60073)},
	mrreviewer = {Georg Lindgren},
	number = {4},
	pages = {835--850},
	title = {Calculating the extremal index for a class of stationary sequences},
	url = {http://dx.doi.org/10.2307/1427679},
	volume = {23},
	year = {1991},
	bdsk-file-1 = {YnBsaXN0MDDSAQIDBFxyZWxhdGl2ZVBhdGhYYm9va21hcmtfEDsuLi8uLi8uLi8uLi8uLi8uLi9Eb2N1bWVudHMvQmlibGlvZ3JhZmlhQXJxdWl2by9DSE0xOTkxLnBkZk8RBFhib29rWAQAAAAABBAwAAAAAAAAAAAAAAAAAAAAAAAAAAAAAAAAAAAAAAAAAAAAAAA8AwAABQAAAAEBAABVc2VycwAAAAgAAAABAQAAam1mcmVpdGEJAAAAAQEAAERvY3VtZW50cwAAABMAAAABAQAAQmlibGlvZ3JhZmlhQXJxdWl2bwALAAAAAQEAAENITTE5OTEucGRmABQAAAABBgAABAAAABQAAAAkAAAAOAAAAFQAAAAIAAAABAMAANC8CgAAAAAACAAAAAQDAAAnZQsAAAAAAAgAAAAEAwAAcHEMAAAAAAAIAAAABAMAAD10DAAAAAAACAAAAAQDAAD1qQwAAAAAABQAAAABBgAAhAAAAJQAAACkAAAAtAAAAMQAAAAIAAAAAAQAAEGx05ULAAAAGAAAAAECAAABAAAAAAAAAA8AAAAAAAAAAAAAAAAAAAAIAAAABAMAAAMAAAAAAAAABAAAAAMDAAD1AQAACAAAAAEJAABmaWxlOi8vLwwAAAABAQAAQXRyb2xhYmlvLUhECAAAAAQDAAAA4AHj6AAAAAgAAAAABAAAQccohIaAAAAkAAAAAQEAADRGOTgyQkRELTA3OEMtNEE2NC1BNTQwLUUzQTQyNEEwOUFCQxgAAAABAgAAgQAAAAEAAADvEwAAAQAAAAAAAAAAAAAAAQAAAAEBAAAvAAAAAAAAAAEFAAAaAAAAAQEAAE5TVVJMRG9jdW1lbnRJZGVudGlmaWVyS2V5AAAEAAAAAwMAAPsEAADrAAAAAQIAADYwMjU3MjBkZDUwNmRlMTliZjRhZjQ3NDk0ZWRiZTc1MmFlZTFiYWNjNzU0M2ZkODMwYzAwNjdhZmIzZDRmZTg7MDA7MDAwMDAwMDA7MDAwMDAwMDA7MDAwMDAwMDA7MDAwMDAwMDAwMDAwMDAyMDtjb20uYXBwbGUuYXBwLXNhbmRib3gucmVhZC13cml0ZTswMTswMTAwMDAwNDswMDAwMDAwMDAwMGNhOWY1OzVmOy91c2Vycy9qbWZyZWl0YS9kb2N1bWVudHMvYmlibGlvZ3JhZmlhYXJxdWl2by9jaG0xOTkxLnBkZgAALwAAAAEBAABOU1VSTEJvb2ttYXJrUXVhcmFudGluZU1vdW50ZWROZXR3b3JrVm9sdW1lc0tleQDkAAAA/v///wEAAAAAAAAAEgAAAAQQAABoAAAAAAAAAAUQAADUAAAAAAAAABAQAAAAAQAAAAAAAEAQAADwAAAAAAAAAAIgAADMAQAAAAAAAAUgAAA8AQAAAAAAABAgAABMAQAAAAAAABEgAACAAQAAAAAAABIgAABgAQAAAAAAABMgAABwAQAAAAAAACAgAACsAQAAAAAAADAgAADYAQAAAAAAAAHAAAAgAQAAAAAAABHAAAAUAAAAAAAAABLAAAAwAQAAAAAAAIDwAAAQAgAAAAAAAOABAIAEAgAAAAAAAAQDAIDYAQAAAAAAAAAIAA0AGgAjAGEAAAAAAAACAQAAAAAAAAAFAAAAAAAAAAAAAAAAAAAEvQ==},
	bdsk-url-1 = {http://dx.doi.org/10.2307/1427679}}

@book{FHR11,
	author = {Falk, Michael and H{\"u}sler, J{\"u}rg and Reiss, Rolf-Dieter},
	date-added = {2024-04-25 17:44:57 +0100},
	date-modified = {2024-04-25 17:44:57 +0100},
	doi = {10.1007/978-3-0348-0009-9},
	edition = {extended},
	isbn = {978-3-0348-0008-2},
	mrclass = {60G70 (60E05 60F17 60G15 60G55 62G32 65C05)},
	mrnumber = {2732365 (2011h:60107)},
	pages = {xvi+509},
	publisher = {Birkh\"auser/Springer Basel AG, Basel},
	title = {Laws of small numbers: extremes and rare events},
	url = {http://dx.doi.org/10.1007/978-3-0348-0009-9},
	year = {2011},
	bdsk-file-1 = {YnBsaXN0MDDSAQIDBFxyZWxhdGl2ZVBhdGhYYm9va21hcmtfEDsuLi8uLi8uLi8uLi8uLi8uLi9Eb2N1bWVudHMvQmlibGlvZ3JhZmlhQXJxdWl2by9GSFIyMDExLnBkZk8RBFhib29rWAQAAAAABBAwAAAAAAAAAAAAAAAAAAAAAAAAAAAAAAAAAAAAAAAAAAAAAAA8AwAABQAAAAEBAABVc2VycwAAAAgAAAABAQAAam1mcmVpdGEJAAAAAQEAAERvY3VtZW50cwAAABMAAAABAQAAQmlibGlvZ3JhZmlhQXJxdWl2bwALAAAAAQEAAEZIUjIwMTEucGRmABQAAAABBgAABAAAABQAAAAkAAAAOAAAAFQAAAAIAAAABAMAANC8CgAAAAAACAAAAAQDAAAnZQsAAAAAAAgAAAAEAwAAcHEMAAAAAAAIAAAABAMAAD10DAAAAAAACAAAAAQDAAASqQwAAAAAABQAAAABBgAAhAAAAJQAAACkAAAAtAAAAMQAAAAIAAAAAAQAAEHF6AdlAAAAGAAAAAECAAABAAAAAAAAAA8AAAAAAAAAAAAAAAAAAAAIAAAABAMAAAMAAAAAAAAABAAAAAMDAAD1AQAACAAAAAEJAABmaWxlOi8vLwwAAAABAQAAQXRyb2xhYmlvLUhECAAAAAQDAAAA4AHj6AAAAAgAAAAABAAAQccohIaAAAAkAAAAAQEAADRGOTgyQkRELTA3OEMtNEE2NC1BNTQwLUUzQTQyNEEwOUFCQxgAAAABAgAAgQAAAAEAAADvEwAAAQAAAAAAAAAAAAAAAQAAAAEBAAAvAAAAAAAAAAEFAAAaAAAAAQEAAE5TVVJMRG9jdW1lbnRJZGVudGlmaWVyS2V5AAAEAAAAAwMAABwEAADrAAAAAQIAADAyZGE0MjM1ZDk0N2I3MWU4ZTlhMjBkMWU1ZTA3M2Y0MDRkMmM4MDEyOTI3OWY4MTkwYzNlMWUyMGRmMmEzNjA7MDA7MDAwMDAwMDA7MDAwMDAwMDA7MDAwMDAwMDA7MDAwMDAwMDAwMDAwMDAyMDtjb20uYXBwbGUuYXBwLXNhbmRib3gucmVhZC13cml0ZTswMTswMTAwMDAwNDswMDAwMDAwMDAwMGNhOTEyOzVmOy91c2Vycy9qbWZyZWl0YS9kb2N1bWVudHMvYmlibGlvZ3JhZmlhYXJxdWl2by9maHIyMDExLnBkZgAALwAAAAEBAABOU1VSTEJvb2ttYXJrUXVhcmFudGluZU1vdW50ZWROZXR3b3JrVm9sdW1lc0tleQDkAAAA/v///wEAAAAAAAAAEgAAAAQQAABoAAAAAAAAAAUQAADUAAAAAAAAABAQAAAAAQAAAAAAAEAQAADwAAAAAAAAAAIgAADMAQAAAAAAAAUgAAA8AQAAAAAAABAgAABMAQAAAAAAABEgAACAAQAAAAAAABIgAABgAQAAAAAAABMgAABwAQAAAAAAACAgAACsAQAAAAAAADAgAADYAQAAAAAAAAHAAAAgAQAAAAAAABHAAAAUAAAAAAAAABLAAAAwAQAAAAAAAIDwAAAQAgAAAAAAAOABAIAEAgAAAAAAAAQDAIDYAQAAAAAAAAAIAA0AGgAjAGEAAAAAAAACAQAAAAAAAAAFAAAAAAAAAAAAAAAAAAAEvQ==},
	bdsk-url-1 = {http://dx.doi.org/10.1007/978-3-0348-0009-9}}

@article{FFT15,
	arxiv = {arXiv:1401.4206},
	author = {Freitas, Ana Cristina Moreira and Freitas, Jorge Milhazes and Todd, Mike},
	date-added = {2024-04-25 17:44:57 +0100},
	date-modified = {2024-04-25 17:44:57 +0100},
	doi = {10.1016/j.spa.2014.11.011},
	fjournal = {Stochastic Processes and their Applications},
	issn = {0304-4149},
	journal = {Stochastic Process. Appl.},
	mrclass = {37A50 (37B20 37C25 60G10 60G70)},
	mrnumber = {3310360},
	number = {4},
	pages = {1653--1687},
	title = {Speed of convergence for laws of rare events and escape rates},
	url = {http://dx.doi.org/10.1016/j.spa.2014.11.011},
	volume = {125},
	year = {2015},
	bdsk-file-1 = {YnBsaXN0MDDSAQIDBFxyZWxhdGl2ZVBhdGhYYm9va21hcmtfEDsuLi8uLi8uLi8uLi8uLi8uLi9Eb2N1bWVudHMvQmlibGlvZ3JhZmlhQXJxdWl2by9GRlQyMDE1LnBkZk8RBFhib29rWAQAAAAABBAwAAAAAAAAAAAAAAAAAAAAAAAAAAAAAAAAAAAAAAAAAAAAAAA8AwAABQAAAAEBAABVc2VycwAAAAgAAAABAQAAam1mcmVpdGEJAAAAAQEAAERvY3VtZW50cwAAABMAAAABAQAAQmlibGlvZ3JhZmlhQXJxdWl2bwALAAAAAQEAAEZGVDIwMTUucGRmABQAAAABBgAABAAAABQAAAAkAAAAOAAAAFQAAAAIAAAABAMAANC8CgAAAAAACAAAAAQDAAAnZQsAAAAAAAgAAAAEAwAAcHEMAAAAAAAIAAAABAMAAD10DAAAAAAACAAAAAQDAADppwwAAAAAABQAAAABBgAAhAAAAJQAAACkAAAAtAAAAMQAAAAIAAAAAAQAAEG6iXiLAAAAGAAAAAECAAABAAAAAAAAAA8AAAAAAAAAAAAAAAAAAAAIAAAABAMAAAMAAAAAAAAABAAAAAMDAAD1AQAACAAAAAEJAABmaWxlOi8vLwwAAAABAQAAQXRyb2xhYmlvLUhECAAAAAQDAAAA4AHj6AAAAAgAAAAABAAAQccohIaAAAAkAAAAAQEAADRGOTgyQkRELTA3OEMtNEE2NC1BNTQwLUUzQTQyNEEwOUFCQxgAAAABAgAAgQAAAAEAAADvEwAAAQAAAAAAAAAAAAAAAQAAAAEBAAAvAAAAAAAAAAEFAAAaAAAAAQEAAE5TVVJMRG9jdW1lbnRJZGVudGlmaWVyS2V5AAAEAAAAAwMAADoDAADrAAAAAQIAADYwNDgxNmIyOTljMWZmNzNmYzUyNTc0MWQxZmY4YjU5ZTY2MDg4YzgxYmI1NzE2NDBlN2UzNGMyNGM4YmMwNzk7MDA7MDAwMDAwMDA7MDAwMDAwMDA7MDAwMDAwMDA7MDAwMDAwMDAwMDAwMDAyMDtjb20uYXBwbGUuYXBwLXNhbmRib3gucmVhZC13cml0ZTswMTswMTAwMDAwNDswMDAwMDAwMDAwMGNhN2U5OzVmOy91c2Vycy9qbWZyZWl0YS9kb2N1bWVudHMvYmlibGlvZ3JhZmlhYXJxdWl2by9mZnQyMDE1LnBkZgAALwAAAAEBAABOU1VSTEJvb2ttYXJrUXVhcmFudGluZU1vdW50ZWROZXR3b3JrVm9sdW1lc0tleQDkAAAA/v///wEAAAAAAAAAEgAAAAQQAABoAAAAAAAAAAUQAADUAAAAAAAAABAQAAAAAQAAAAAAAEAQAADwAAAAAAAAAAIgAADMAQAAAAAAAAUgAAA8AQAAAAAAABAgAABMAQAAAAAAABEgAACAAQAAAAAAABIgAABgAQAAAAAAABMgAABwAQAAAAAAACAgAACsAQAAAAAAADAgAADYAQAAAAAAAAHAAAAgAQAAAAAAABHAAAAUAAAAAAAAABLAAAAwAQAAAAAAAIDwAAAQAgAAAAAAAOABAIAEAgAAAAAAAAQDAIDYAQAAAAAAAAAIAA0AGgAjAGEAAAAAAAACAQAAAAAAAAAFAAAAAAAAAAAAAAAAAAAEvQ==},
	bdsk-url-1 = {http://dx.doi.org/10.1016/j.spa.2014.11.011},
	bdsk-url-2 = {http://arxiv.org/abs/1401.4206},
	bdsk-url-3 = {arXiv:1401.4206}}

@article{FFT12,
	arxiv = {http://arxiv.org/abs/1008.1350},
	author = {Freitas, Ana Cristina Moreira and Freitas, Jorge Milhazes and Todd, Mike},
	coden = {ADMTA4},
	date-added = {2024-04-25 17:44:57 +0100},
	date-modified = {2024-04-25 17:44:57 +0100},
	doi = {10.1016/j.aim.2012.07.029},
	fjournal = {Advances in Mathematics},
	issn = {0001-8708},
	journal = {Adv. Math.},
	mrclass = {60G70 (60G10)},
	mrnumber = {2970462},
	number = {5},
	pages = {2626--2665},
	title = {The extremal index, hitting time statistics and periodicity},
	url = {http://dx.doi.org/10.1016/j.aim.2012.07.029},
	volume = {231},
	year = {2012},
	bdsk-file-1 = {YnBsaXN0MDDSAQIDBFxyZWxhdGl2ZVBhdGhYYm9va21hcmtfEDsuLi8uLi8uLi8uLi8uLi8uLi9Eb2N1bWVudHMvQmlibGlvZ3JhZmlhQXJxdWl2by9GRlQyMDEyLnBkZk8RBFhib29rWAQAAAAABBAwAAAAAAAAAAAAAAAAAAAAAAAAAAAAAAAAAAAAAAAAAAAAAAA8AwAABQAAAAEBAABVc2VycwAAAAgAAAABAQAAam1mcmVpdGEJAAAAAQEAAERvY3VtZW50cwAAABMAAAABAQAAQmlibGlvZ3JhZmlhQXJxdWl2bwALAAAAAQEAAEZGVDIwMTIucGRmABQAAAABBgAABAAAABQAAAAkAAAAOAAAAFQAAAAIAAAABAMAANC8CgAAAAAACAAAAAQDAAAnZQsAAAAAAAgAAAAEAwAAcHEMAAAAAAAIAAAABAMAAD10DAAAAAAACAAAAAQDAACbqAwAAAAAABQAAAABBgAAhAAAAJQAAACkAAAAtAAAAMQAAAAIAAAAAAQAAEG46IKiAAAAGAAAAAECAAABAAAAAAAAAA8AAAAAAAAAAAAAAAAAAAAIAAAABAMAAAMAAAAAAAAABAAAAAMDAAD1AQAACAAAAAEJAABmaWxlOi8vLwwAAAABAQAAQXRyb2xhYmlvLUhECAAAAAQDAAAA4AHj6AAAAAgAAAAABAAAQccohIaAAAAkAAAAAQEAADRGOTgyQkRELTA3OEMtNEE2NC1BNTQwLUUzQTQyNEEwOUFCQxgAAAABAgAAgQAAAAEAAADvEwAAAQAAAAAAAAAAAAAAAQAAAAEBAAAvAAAAAAAAAAEFAAAaAAAAAQEAAE5TVVJMRG9jdW1lbnRJZGVudGlmaWVyS2V5AAAEAAAAAwMAAMYDAADrAAAAAQIAAGRiMjkyMmJiZmVhNzE5MGEyNjE3OWNkZjI2ZTcyNWNjYmU2NjllODhlOTMxZTRhNzEwMjlkMGFkODdhNmI5NmU7MDA7MDAwMDAwMDA7MDAwMDAwMDA7MDAwMDAwMDA7MDAwMDAwMDAwMDAwMDAyMDtjb20uYXBwbGUuYXBwLXNhbmRib3gucmVhZC13cml0ZTswMTswMTAwMDAwNDswMDAwMDAwMDAwMGNhODliOzVmOy91c2Vycy9qbWZyZWl0YS9kb2N1bWVudHMvYmlibGlvZ3JhZmlhYXJxdWl2by9mZnQyMDEyLnBkZgAALwAAAAEBAABOU1VSTEJvb2ttYXJrUXVhcmFudGluZU1vdW50ZWROZXR3b3JrVm9sdW1lc0tleQDkAAAA/v///wEAAAAAAAAAEgAAAAQQAABoAAAAAAAAAAUQAADUAAAAAAAAABAQAAAAAQAAAAAAAEAQAADwAAAAAAAAAAIgAADMAQAAAAAAAAUgAAA8AQAAAAAAABAgAABMAQAAAAAAABEgAACAAQAAAAAAABIgAABgAQAAAAAAABMgAABwAQAAAAAAACAgAACsAQAAAAAAADAgAADYAQAAAAAAAAHAAAAgAQAAAAAAABHAAAAUAAAAAAAAABLAAAAwAQAAAAAAAIDwAAAQAgAAAAAAAOABAIAEAgAAAAAAAAQDAIDYAQAAAAAAAAAIAA0AGgAjAGEAAAAAAAACAQAAAAAAAAAFAAAAAAAAAAAAAAAAAAAEvQ==},
	bdsk-url-1 = {http://dx.doi.org/10.1016/j.aim.2012.07.029},
	bdsk-url-2 = {http://arxiv.org/abs/1008.1350}}

@article{H89a,
	author = {Hsing, Tailen},
	date-added = {2024-04-25 17:44:57 +0100},
	date-modified = {2024-04-25 17:44:57 +0100},
	doi = {10.1016/0047-259X(89)90028-6},
	fjournal = {Journal of Multivariate Analysis},
	issn = {0047-259X},
	journal = {J. Multivariate Anal.},
	mrclass = {60F99 (60G10 62H05)},
	mrnumber = {1004339},
	mrreviewer = {Simeon M. Berman},
	number = {2},
	pages = {274--291},
	title = {Extreme value theory for multivariate stationary sequences},
	url = {https://doi.org/10.1016/0047-259X(89)90028-6},
	volume = {29},
	year = {1989},
	bdsk-file-1 = {YnBsaXN0MDDSAQIDBFxyZWxhdGl2ZVBhdGhYYm9va21hcmtfEDouLi8uLi8uLi8uLi8uLi8uLi9Eb2N1bWVudHMvQmlibGlvZ3JhZmlhQXJxdWl2by9IMTk4OWEucGRmTxEEWGJvb2tYBAAAAAAEEDAAAAAAAAAAAAAAAAAAAAAAAAAAAAAAAAAAAAAAAAAAAAAAADwDAAAFAAAAAQEAAFVzZXJzAAAACAAAAAEBAABqbWZyZWl0YQkAAAABAQAARG9jdW1lbnRzAAAAEwAAAAEBAABCaWJsaW9ncmFmaWFBcnF1aXZvAAoAAAABAQAASDE5ODlhLnBkZgAAFAAAAAEGAAAEAAAAFAAAACQAAAA4AAAAVAAAAAgAAAAEAwAA0LwKAAAAAAAIAAAABAMAACdlCwAAAAAACAAAAAQDAABwcQwAAAAAAAgAAAAEAwAAPXQMAAAAAAAIAAAABAMAAOmoDAAAAAAAFAAAAAEGAACEAAAAlAAAAKQAAAC0AAAAxAAAAAgAAAAABAAAQcZFD+kAAAAYAAAAAQIAAAEAAAAAAAAADwAAAAAAAAAAAAAAAAAAAAgAAAAEAwAAAwAAAAAAAAAEAAAAAwMAAPUBAAAIAAAAAQkAAGZpbGU6Ly8vDAAAAAEBAABBdHJvbGFiaW8tSEQIAAAABAMAAADgAePoAAAACAAAAAAEAABBxyiEhoAAACQAAAABAQAANEY5ODJCREQtMDc4Qy00QTY0LUE1NDAtRTNBNDI0QTA5QUJDGAAAAAECAACBAAAAAQAAAO8TAAABAAAAAAAAAAAAAAABAAAAAQEAAC8AAAAAAAAAAQUAABoAAAABAQAATlNVUkxEb2N1bWVudElkZW50aWZpZXJLZXkAAAQAAAADAwAA/wMAAOoAAAABAgAAMDg5ZWRmYzQ2YjI0NTUzNDZkMjM3ZWJkODBlMTdhMWJiNmZiN2ZhZjUzMjgxZmY0ZWUyNzk2ZTNiYTZlN2I1MjswMDswMDAwMDAwMDswMDAwMDAwMDswMDAwMDAwMDswMDAwMDAwMDAwMDAwMDIwO2NvbS5hcHBsZS5hcHAtc2FuZGJveC5yZWFkLXdyaXRlOzAxOzAxMDAwMDA0OzAwMDAwMDAwMDAwY2E4ZTk7NWY7L3VzZXJzL2ptZnJlaXRhL2RvY3VtZW50cy9iaWJsaW9ncmFmaWFhcnF1aXZvL2gxOTg5YS5wZGYAAAAvAAAAAQEAAE5TVVJMQm9va21hcmtRdWFyYW50aW5lTW91bnRlZE5ldHdvcmtWb2x1bWVzS2V5AOQAAAD+////AQAAAAAAAAASAAAABBAAAGgAAAAAAAAABRAAANQAAAAAAAAAEBAAAAABAAAAAAAAQBAAAPAAAAAAAAAAAiAAAMwBAAAAAAAABSAAADwBAAAAAAAAECAAAEwBAAAAAAAAESAAAIABAAAAAAAAEiAAAGABAAAAAAAAEyAAAHABAAAAAAAAICAAAKwBAAAAAAAAMCAAANgBAAAAAAAAAcAAACABAAAAAAAAEcAAABQAAAAAAAAAEsAAADABAAAAAAAAgPAAABACAAAAAAAA4AEAgAQCAAAAAAAABAMAgNgBAAAAAAAAAAgADQAaACMAYAAAAAAAAAIBAAAAAAAAAAUAAAAAAAAAAAAAAAAAAAS8},
	bdsk-url-1 = {https://doi.org/10.1016/0047-259X(89)90028-6}}

@incollection{H89,
	author = {H{\"u}sler, J.},
	booktitle = {Extreme value theory ({O}berwolfach, 1987)},
	date-added = {2024-04-25 17:44:57 +0100},
	date-modified = {2024-04-25 17:44:57 +0100},
	doi = {10.1007/978-1-4612-3634-4_20},
	mrclass = {60F05},
	mrnumber = {992062 (91b:60020)},
	mrreviewer = {Sidney I. Resnick},
	pages = {234--245},
	publisher = {Springer, New York},
	series = {Lecture Notes in Statist.},
	title = {Limit distributions of multivariate extreme values in nonstationary sequences of random vectors},
	url = {http://dx.doi.org/10.1007/978-1-4612-3634-4_20},
	volume = {51},
	year = {1989},
	bdsk-file-1 = {YnBsaXN0MDDSAQIDBFxyZWxhdGl2ZVBhdGhYYm9va21hcmtfEDkuLi8uLi8uLi8uLi8uLi8uLi9Eb2N1bWVudHMvQmlibGlvZ3JhZmlhQXJxdWl2by9IMTk4OS5wZGZPEQQUYm9vaxQEAAAAAAQQMAAAAAAAAAAAAAAAAAAAAAAAAAAAAAAAAAAAAAAAAAAAAAAABAMAAAUAAAABAQAAVXNlcnMAAAAIAAAAAQEAAGptZnJlaXRhCQAAAAEBAABEb2N1bWVudHMAAAATAAAAAQEAAEJpYmxpb2dyYWZpYUFycXVpdm8ACQAAAAEBAABIMTk4OS5wZGYAAAAUAAAAAQYAAAQAAAAUAAAAJAAAADgAAABUAAAACAAAAAQDAADQvAoAAAAAAAgAAAAEAwAAJ2ULAAAAAAAIAAAABAMAAHBxDAAAAAAACAAAAAQDAAA9dAwAAAAAAAgAAAAEAwAA2qgMAAAAAAAUAAAAAQYAAIQAAACUAAAApAAAALQAAADEAAAACAAAAAAEAABBuePyTwAAABgAAAABAgAAAQAAAAAAAAAPAAAAAAAAAAAAAAAAAAAACAAAAAQDAAADAAAAAAAAAAQAAAADAwAA9QEAAAgAAAABCQAAZmlsZTovLy8MAAAAAQEAAEF0cm9sYWJpby1IRAgAAAAEAwAAAOAB4+gAAAAIAAAAAAQAAEHHKISGgAAAJAAAAAEBAAA0Rjk4MkJERC0wNzhDLTRBNjQtQTU0MC1FM0E0MjRBMDlBQkMYAAAAAQIAAIEAAAABAAAA7xMAAAEAAAAAAAAAAAAAAAEAAAABAQAALwAAAAAAAAABBQAAGgAAAAEBAABOU1VSTERvY3VtZW50SWRlbnRpZmllcktleQAABAAAAAMDAADxAwAA6QAAAAECAAA3OWQ2YzRjNTNjZThmMzBkZjI0NjA4NDE0YTkxZjM1Y2RjYWJjMWVhNjFkMTg5MzAxZTA5NGExYjViOTMzNTNlOzAwOzAwMDAwMDAwOzAwMDAwMDAwOzAwMDAwMDAwOzAwMDAwMDAwMDAwMDAwMjA7Y29tLmFwcGxlLmFwcC1zYW5kYm94LnJlYWQtd3JpdGU7MDE7MDEwMDAwMDQ7MDAwMDAwMDAwMDBjYThkYTs1ZjsvdXNlcnMvam1mcmVpdGEvZG9jdW1lbnRzL2JpYmxpb2dyYWZpYWFycXVpdm8vaDE5ODkucGRmAAAAANgAAAD+////AQAAAAAAAAARAAAABBAAAGgAAAAAAAAABRAAANQAAAAAAAAAEBAAAAABAAAAAAAAQBAAAPAAAAAAAAAAAiAAAMwBAAAAAAAABSAAADwBAAAAAAAAECAAAEwBAAAAAAAAESAAAIABAAAAAAAAEiAAAGABAAAAAAAAEyAAAHABAAAAAAAAICAAAKwBAAAAAAAAMCAAANgBAAAAAAAAAcAAACABAAAAAAAAEcAAABQAAAAAAAAAEsAAADABAAAAAAAAgPAAABACAAAAAAAA4AEAgAQCAAAAAAAAAAgADQAaACMAXwAAAAAAAAIBAAAAAAAAAAUAAAAAAAAAAAAAAAAAAAR3},
	bdsk-url-1 = {http://dx.doi.org/10.1007/978-1-4612-3634-4_20}}

@article{N94,
	author = {Nandagopalan, S.},
	date-added = {2024-04-25 17:44:57 +0100},
	date-modified = {2024-04-25 17:44:57 +0100},
	journal = {J. Res. Nat. Inst. Stand. Technol.},
	pages = {543--550},
	title = {On the multivariate extremal index},
	volume = {99},
	year = {1994},
	bdsk-file-1 = {YnBsaXN0MDDSAQIDBFxyZWxhdGl2ZVBhdGhYYm9va21hcmtfEDkuLi8uLi8uLi8uLi8uLi8uLi9Eb2N1bWVudHMvQmlibGlvZ3JhZmlhQXJxdWl2by9OMTk5NC5wZGZPEQRYYm9va1gEAAAAAAQQMAAAAAAAAAAAAAAAAAAAAAAAAAAAAAAAAAAAAAAAAAAAAAAAPAMAAAUAAAABAQAAVXNlcnMAAAAIAAAAAQEAAGptZnJlaXRhCQAAAAEBAABEb2N1bWVudHMAAAATAAAAAQEAAEJpYmxpb2dyYWZpYUFycXVpdm8ACQAAAAEBAABOMTk5NC5wZGYAAAAUAAAAAQYAAAQAAAAUAAAAJAAAADgAAABUAAAACAAAAAQDAADQvAoAAAAAAAgAAAAEAwAAJ2ULAAAAAAAIAAAABAMAAHBxDAAAAAAACAAAAAQDAAA9dAwAAAAAAAgAAAAEAwAAkKgMAAAAAAAUAAAAAQYAAIQAAACUAAAApAAAALQAAADEAAAACAAAAAAEAABBxfHvqIAAABgAAAABAgAAAQAAAAAAAAAPAAAAAAAAAAAAAAAAAAAACAAAAAQDAAADAAAAAAAAAAQAAAADAwAA9QEAAAgAAAABCQAAZmlsZTovLy8MAAAAAQEAAEF0cm9sYWJpby1IRAgAAAAEAwAAAOAB4+gAAAAIAAAAAAQAAEHHKISGgAAAJAAAAAEBAAA0Rjk4MkJERC0wNzhDLTRBNjQtQTU0MC1FM0E0MjRBMDlBQkMYAAAAAQIAAIEAAAABAAAA7xMAAAEAAAAAAAAAAAAAAAEAAAABAQAALwAAAAAAAAABBQAAGgAAAAEBAABOU1VSTERvY3VtZW50SWRlbnRpZmllcktleQAABAAAAAMDAAC7AwAA6QAAAAECAAAxM2QyMTNjNmE0YTQ4Mjc5MmRkNTMyZDI0NGMxODBjNTAwZWEwODAzZGJiMDk3NmVmODBhYzRhMTc3ZDU4ZmYxOzAwOzAwMDAwMDAwOzAwMDAwMDAwOzAwMDAwMDAwOzAwMDAwMDAwMDAwMDAwMjA7Y29tLmFwcGxlLmFwcC1zYW5kYm94LnJlYWQtd3JpdGU7MDE7MDEwMDAwMDQ7MDAwMDAwMDAwMDBjYTg5MDs1ZjsvdXNlcnMvam1mcmVpdGEvZG9jdW1lbnRzL2JpYmxpb2dyYWZpYWFycXVpdm8vbjE5OTQucGRmAAAAAC8AAAABAQAATlNVUkxCb29rbWFya1F1YXJhbnRpbmVNb3VudGVkTmV0d29ya1ZvbHVtZXNLZXkA5AAAAP7///8BAAAAAAAAABIAAAAEEAAAaAAAAAAAAAAFEAAA1AAAAAAAAAAQEAAAAAEAAAAAAABAEAAA8AAAAAAAAAACIAAAzAEAAAAAAAAFIAAAPAEAAAAAAAAQIAAATAEAAAAAAAARIAAAgAEAAAAAAAASIAAAYAEAAAAAAAATIAAAcAEAAAAAAAAgIAAArAEAAAAAAAAwIAAA2AEAAAAAAAABwAAAIAEAAAAAAAARwAAAFAAAAAAAAAASwAAAMAEAAAAAAACA8AAAEAIAAAAAAADgAQCABAIAAAAAAAAEAwCA2AEAAAAAAAAACAANABoAIwBfAAAAAAAAAgEAAAAAAAAABQAAAAAAAAAAAAAAAAAABLs=}}

@article{P97,
	author = {Perfekt, Roland},
	date-added = {2024-04-25 17:44:57 +0100},
	date-modified = {2024-04-25 17:44:57 +0100},
	doi = {10.2307/1427864},
	fjournal = {Advances in Applied Probability},
	issn = {0001-8678,1475-6064},
	journal = {Adv. in Appl. Probab.},
	mrclass = {60G70 (60G10 60J05)},
	mrnumber = {1432934},
	mrreviewer = {J.\ H\"{u}sler},
	number = {1},
	pages = {138--164},
	title = {Extreme value theory for a class of {M}arkov chains with values in {$\bold R^d$}},
	url = {https://doi.org/10.2307/1427864},
	volume = {29},
	year = {1997},
	bdsk-file-1 = {YnBsaXN0MDDSAQIDBFxyZWxhdGl2ZVBhdGhYYm9va21hcmtfEDouLi8uLi8uLi8uLi8uLi8uLi9Eb2N1bWVudHMvQmlibGlvZ3JhZmlhQXJxdWl2by9QMTk5NyAucGRmTxEEWGJvb2tYBAAAAAAEEDAAAAAAAAAAAAAAAAAAAAAAAAAAAAAAAAAAAAAAAAAAAAAAADwDAAAFAAAAAQEAAFVzZXJzAAAACAAAAAEBAABqbWZyZWl0YQkAAAABAQAARG9jdW1lbnRzAAAAEwAAAAEBAABCaWJsaW9ncmFmaWFBcnF1aXZvAAoAAAABAQAAUDE5OTcgLnBkZgAAFAAAAAEGAAAEAAAAFAAAACQAAAA4AAAAVAAAAAgAAAAEAwAA0LwKAAAAAAAIAAAABAMAACdlCwAAAAAACAAAAAQDAABwcQwAAAAAAAgAAAAEAwAAPXQMAAAAAAAIAAAABAMAALuoDAAAAAAAFAAAAAEGAACEAAAAlAAAAKQAAAC0AAAAxAAAAAgAAAAABAAAQcXiIwcAAAAYAAAAAQIAAAEAAAAAAAAADwAAAAAAAAAAAAAAAAAAAAgAAAAEAwAAAwAAAAAAAAAEAAAAAwMAAPUBAAAIAAAAAQkAAGZpbGU6Ly8vDAAAAAEBAABBdHJvbGFiaW8tSEQIAAAABAMAAADgAePoAAAACAAAAAAEAABBxyiEhoAAACQAAAABAQAANEY5ODJCREQtMDc4Qy00QTY0LUE1NDAtRTNBNDI0QTA5QUJDGAAAAAECAACBAAAAAQAAAO8TAAABAAAAAAAAAAAAAAABAAAAAQEAAC8AAAAAAAAAAQUAABoAAAABAQAATlNVUkxEb2N1bWVudElkZW50aWZpZXJLZXkAAAQAAAADAwAA3wMAAOoAAAABAgAANjNjYjFhM2VkOGI1ODI1YmU3OWI2YjFhOGQ4NmRkMjEzZmNiMmI1NjMxZGU0NDZlNmQxMTkyZDY1YjI3NGEyYjswMDswMDAwMDAwMDswMDAwMDAwMDswMDAwMDAwMDswMDAwMDAwMDAwMDAwMDIwO2NvbS5hcHBsZS5hcHAtc2FuZGJveC5yZWFkLXdyaXRlOzAxOzAxMDAwMDA0OzAwMDAwMDAwMDAwY2E4YmI7NWY7L3VzZXJzL2ptZnJlaXRhL2RvY3VtZW50cy9iaWJsaW9ncmFmaWFhcnF1aXZvL3AxOTk3IC5wZGYAAAAvAAAAAQEAAE5TVVJMQm9va21hcmtRdWFyYW50aW5lTW91bnRlZE5ldHdvcmtWb2x1bWVzS2V5AOQAAAD+////AQAAAAAAAAASAAAABBAAAGgAAAAAAAAABRAAANQAAAAAAAAAEBAAAAABAAAAAAAAQBAAAPAAAAAAAAAAAiAAAMwBAAAAAAAABSAAADwBAAAAAAAAECAAAEwBAAAAAAAAESAAAIABAAAAAAAAEiAAAGABAAAAAAAAEyAAAHABAAAAAAAAICAAAKwBAAAAAAAAMCAAANgBAAAAAAAAAcAAACABAAAAAAAAEcAAABQAAAAAAAAAEsAAADABAAAAAAAAgPAAABACAAAAAAAA4AEAgAQCAAAAAAAABAMAgNgBAAAAAAAAAAgADQAaACMAYAAAAAAAAAIBAAAAAAAAAAUAAAAAAAAAAAAAAAAAAAS8},
	bdsk-url-1 = {https://doi.org/10.2307/1427864}}

@article{S12,
	author = {Segers, Johan},
	date-added = {2024-04-25 17:44:57 +0100},
	date-modified = {2024-04-25 17:44:57 +0100},
	fjournal = {REVSTAT Statistical Journal},
	issn = {1645-6726},
	journal = {REVSTAT},
	mrclass = {62G32 (62H05)},
	mrnumber = {2912371},
	mrreviewer = {John Tuhao Chen},
	number = {1},
	pages = {61--82},
	title = {Max-stable models for multivariate extremes},
	volume = {10},
	year = {2012},
	bdsk-file-1 = {YnBsaXN0MDDSAQIDBFxyZWxhdGl2ZVBhdGhYYm9va21hcmtfEDkuLi8uLi8uLi8uLi8uLi8uLi9Eb2N1bWVudHMvQmlibGlvZ3JhZmlhQXJxdWl2by9TMjAxMi5wZGZPEQRYYm9va1gEAAAAAAQQMAAAAAAAAAAAAAAAAAAAAAAAAAAAAAAAAAAAAAAAAAAAAAAAPAMAAAUAAAABAQAAVXNlcnMAAAAIAAAAAQEAAGptZnJlaXRhCQAAAAEBAABEb2N1bWVudHMAAAATAAAAAQEAAEJpYmxpb2dyYWZpYUFycXVpdm8ACQAAAAEBAABTMjAxMi5wZGYAAAAUAAAAAQYAAAQAAAAUAAAAJAAAADgAAABUAAAACAAAAAQDAADQvAoAAAAAAAgAAAAEAwAAJ2ULAAAAAAAIAAAABAMAAHBxDAAAAAAACAAAAAQDAAA9dAwAAAAAAAgAAAAEAwAAVagMAAAAAAAUAAAAAQYAAIQAAACUAAAApAAAALQAAADEAAAACAAAAAAEAABBwbjrZoAAABgAAAABAgAAAQAAAAAAAAAPAAAAAAAAAAAAAAAAAAAACAAAAAQDAAADAAAAAAAAAAQAAAADAwAA9QEAAAgAAAABCQAAZmlsZTovLy8MAAAAAQEAAEF0cm9sYWJpby1IRAgAAAAEAwAAAOAB4+gAAAAIAAAAAAQAAEHHKISGgAAAJAAAAAEBAAA0Rjk4MkJERC0wNzhDLTRBNjQtQTU0MC1FM0E0MjRBMDlBQkMYAAAAAQIAAIEAAAABAAAA7xMAAAEAAAAAAAAAAAAAAAEAAAABAQAALwAAAAAAAAABBQAAGgAAAAEBAABOU1VSTERvY3VtZW50SWRlbnRpZmllcktleQAABAAAAAMDAACNAwAA6QAAAAECAABmMWZlNDJhODUwZWUyMWNhNzk5YzEzZDhlYjlmZjQwYjJmNDFkZGZiNmM3MTIwODljZmNiNmE5NTRhMjQ2YTNkOzAwOzAwMDAwMDAwOzAwMDAwMDAwOzAwMDAwMDAwOzAwMDAwMDAwMDAwMDAwMjA7Y29tLmFwcGxlLmFwcC1zYW5kYm94LnJlYWQtd3JpdGU7MDE7MDEwMDAwMDQ7MDAwMDAwMDAwMDBjYTg1NTs1ZjsvdXNlcnMvam1mcmVpdGEvZG9jdW1lbnRzL2JpYmxpb2dyYWZpYWFycXVpdm8vczIwMTIucGRmAAAAAC8AAAABAQAATlNVUkxCb29rbWFya1F1YXJhbnRpbmVNb3VudGVkTmV0d29ya1ZvbHVtZXNLZXkA5AAAAP7///8BAAAAAAAAABIAAAAEEAAAaAAAAAAAAAAFEAAA1AAAAAAAAAAQEAAAAAEAAAAAAABAEAAA8AAAAAAAAAACIAAAzAEAAAAAAAAFIAAAPAEAAAAAAAAQIAAATAEAAAAAAAARIAAAgAEAAAAAAAASIAAAYAEAAAAAAAATIAAAcAEAAAAAAAAgIAAArAEAAAAAAAAwIAAA2AEAAAAAAAABwAAAIAEAAAAAAAARwAAAFAAAAAAAAAASwAAAMAEAAAAAAACA8AAAEAIAAAAAAADgAQCABAIAAAAAAAAEAwCA2AEAAAAAAAAACAANABoAIwBfAAAAAAAAAgEAAAAAAAAABQAAAAAAAAAAAAAAAAAABLs=},
	bdsk-file-2 = {YnBsaXN0MDDSAQIDBFxyZWxhdGl2ZVBhdGhYYm9va21hcmtfEDouLi8uLi8uLi8uLi8uLi8uLi9Eb2N1bWVudHMvQmlibGlvZ3JhZmlhQXJxdWl2by9TMjAxMmEucGRmTxEEWGJvb2tYBAAAAAAEEDAAAAAAAAAAAAAAAAAAAAAAAAAAAAAAAAAAAAAAAAAAAAAAADwDAAAFAAAAAQEAAFVzZXJzAAAACAAAAAEBAABqbWZyZWl0YQkAAAABAQAARG9jdW1lbnRzAAAAEwAAAAEBAABCaWJsaW9ncmFmaWFBcnF1aXZvAAoAAAABAQAAUzIwMTJhLnBkZgAAFAAAAAEGAAAEAAAAFAAAACQAAAA4AAAAVAAAAAgAAAAEAwAA0LwKAAAAAAAIAAAABAMAACdlCwAAAAAACAAAAAQDAABwcQwAAAAAAAgAAAAEAwAAPXQMAAAAAAAIAAAABAMAADhzPgIAAAAAFAAAAAEGAACEAAAAlAAAAKQAAAC0AAAAxAAAAAgAAAAABAAAQcYCfMYAAAAYAAAAAQIAAAEAAAAAAAAADwAAAAAAAAAAAAAAAAAAAAgAAAAEAwAAAwAAAAAAAAAEAAAAAwMAAPUBAAAIAAAAAQkAAGZpbGU6Ly8vDAAAAAEBAABBdHJvbGFiaW8tSEQIAAAABAMAAADgAePoAAAACAAAAAAEAABBxyiEhoAAACQAAAABAQAANEY5ODJCREQtMDc4Qy00QTY0LUE1NDAtRTNBNDI0QTA5QUJDGAAAAAECAACBAAAAAQAAAO8TAAABAAAAAAAAAAAAAAABAAAAAQEAAC8AAAAAAAAAAQUAABoAAAABAQAATlNVUkxEb2N1bWVudElkZW50aWZpZXJLZXkAAAQAAAADAwAAeAQAAOoAAAABAgAAMzdiZGU3Zjk5ZDk0Mzk3NGE3MzQxYzhjZWVlYzNjMzFhNDdmZWU3NmNkMmZlYjZkYjJlYzFlMWIyMGRkNWE0NzswMDswMDAwMDAwMDswMDAwMDAwMDswMDAwMDAwMDswMDAwMDAwMDAwMDAwMDIwO2NvbS5hcHBsZS5hcHAtc2FuZGJveC5yZWFkLXdyaXRlOzAxOzAxMDAwMDA0OzAwMDAwMDAwMDIzZTczMzg7NWY7L3VzZXJzL2ptZnJlaXRhL2RvY3VtZW50cy9iaWJsaW9ncmFmaWFhcnF1aXZvL3MyMDEyYS5wZGYAAAAvAAAAAQEAAE5TVVJMQm9va21hcmtRdWFyYW50aW5lTW91bnRlZE5ldHdvcmtWb2x1bWVzS2V5AOQAAAD+////AQAAAAAAAAASAAAABBAAAGgAAAAAAAAABRAAANQAAAAAAAAAEBAAAAABAAAAAAAAQBAAAPAAAAAAAAAAAiAAAMwBAAAAAAAABSAAADwBAAAAAAAAECAAAEwBAAAAAAAAESAAAIABAAAAAAAAEiAAAGABAAAAAAAAEyAAAHABAAAAAAAAICAAAKwBAAAAAAAAMCAAANgBAAAAAAAAAcAAACABAAAAAAAAEcAAABQAAAAAAAAAEsAAADABAAAAAAAAgPAAABACAAAAAAAA4AEAgAQCAAAAAAAABAMAgNgBAAAAAAAAAAgADQAaACMAYAAAAAAAAAIBAAAAAAAAAAUAAAAAAAAAAAAAAAAAAAS8}}

\end{document}